\pgfplotsset{compat=newest}
\declaretheorem[name=Theorem,refname={Theorem},style=plain,numberwithin=section]{theorem}
\declaretheorem[name=Theorem,refname={Theorem},style=plain,numbered=no]{theorem*}
\declaretheorem[name=Proposition,refname={Proposition},style=plain,sibling=theorem]{proposition}
\declaretheorem[name=Proposition,refname={Proposition},style=plain,numbered=no]{proposition*}
\declaretheorem[name=Lemma,refname={Lemma},style=plain,sibling=theorem]{lemma}
\declaretheorem[name=Lemma,refname={Lemma},style=plain,numbered=no]{lemma*}
\declaretheorem[name=Definition,refname={Definition},style=definition,numbered=no]{definition*}
\declaretheorem[name=Remark,refname={Remark},style=definition,sibling=theorem]{remark}
\declaretheorem[name=Remark,refname={Remark},style=remark,numbered=no]{remark*}
\declaretheorem[name=Example,refname={Example},style=definition,sibling=theorem]{example}
\declaretheorem[name=Example,refname={Example},style=definition,numbered=no]{example*}
\declaretheorem[name=Corollary,refname={Corollary},style=plain,sibling=theorem]{corollary}
\declaretheorem[name=Corollary,refname={Corollary},style=plain,numbered=no]{corollary*}
\declaretheorem[name=Conjecture,refname={Conjecture},style=plain,sibling=theorem]{conjecture}
\def\bC{{\mathbf{C}}}
\def\bP{{\mathbf{P}}}
\def\bQ{{\mathbf{Q}}}
\def\bR{{\mathbf{R}}}
\def\bZ{{\mathbf{Z}}}
\def\cE{{\mathcal{E}}}
\def\cO{{\mathcal{O}}}
\def\cT{{\mathcal{T}}}
\def\Bl{\operatorname{Bl}}
\def\Id{\operatorname{Id}}
\def\Pic{\operatorname{Pic}}
\def\Sing{\operatorname{Sing}}
\def\supp{\operatorname{supp}}
\def\Sym{\operatorname{Sym}}
\def\td{\operatorname{td}}
\def\inner#1{{\left<{#1}\right>}}
\def\set#1{{\left\{{#1}\right\}}}
\def\setmid#1#2{{\left\{{#1}\;\middle|\;{#2}\right\}}}
\def\q{\mathfrak q}
\def\ch{\mathrm{ch}}
\def\KKK{\mathrm{K3}}
\def\Kum{\mathrm{Kum}}
\def\OG{\mathrm{OG}}
\def\RR{\mathrm{RR}}
\def\RW{\mathrm{RW}}
\def\SH{\mathrm{SH}}
\def\top{\mathrm{top}}
\def\sigmabar{{\overline\sigma}}
\def\HT{{\mathrm{HT}}}
\def\h{{\mathrm{H}}}
\def\so{\mathfrak{so}}
\def\longarrow#1#2{\mathchoice{#2}{#1}{#1}{#1}}
\def\to{\longarrow{\rightarrow}{\longrightarrow}}
\def\simto{\longarrow{\xrightarrow\sim}{\stackrel\sim\longrightarrow}}
\let\shortmapsto\mapsto
\def\mapsto{\longarrow{\shortmapsto}{\longmapsto}}
\title{Second Chern class and Fujiki constants of hyperkähler manifolds}
\author{Thorsten Beckmann}
\address{Max--Planck--Institut für Mathematik, Vivatsgasse 7, 53111 Bonn, Germany}
\email{\url{beckmann@math.uni-bonn.de}}
\author{Jieao Song}
\address{Université Paris Cité, CNRS, IMJ-PRG, F-75013 Paris, France}
\email{\url{jieao.song@imj-prg.fr}}
\date{\today}
\begin{document}
\maketitle
\begin{abstract}
    We study characteristic classes on hyperkähler manifolds with a view towards the Verbitsky component. The case of the second Chern class leads to a conditional upper bound on the second Betti number in terms of the Riemann--Roch polynomial, which is also valid for singular examples. 
    We discuss the general structure of characteristic classes and the Riemann--Roch polynomial on hyperkähler manifolds using among other things Rozansky--Witten theory. 
\end{abstract}
\section{Introduction}
In the study of smooth projective varieties with trivial canonical bundle, irreducible compact hyperkähler manifolds take up a prominent place, partly due to the scarcity of examples. It is therefore natural to study a priori topological restrictions that such varieties must obey. There are several results in this direction, for example \cites{Guan4dimHK, JiangRR, Salamon, KimLaza, SawonThesis, Sawon2021}.

Given an irreducible hyperkähler manifold $X$ of dimension $2n$, its second cohomology group $H^2(X,\bR)$ is equipped with the Beauville--Bogomolov--Fujiki form $q_X$. Moreover, the full cohomology ring $H^\ast(X,\bR)$ is naturally a module under the Looijenga--Lunts--Verbitsky (LLV) Lie algebra $\mathfrak{g}(X)_{\bR}$ \cites{LooijengaLunts, VerbitskyCohomologyHK, GKLRLLV}. This leads to a decomposition of $H^\ast(X,\bR)$ into irreducible representations. Arguably, the most important one is the Verbitsky component $\SH(X,\bR)\subset H^\ast(X,\bR)$, which is the subalgebra generated by $H^2(X,\bR)$. 

A natural question that arises is how much information this subalgebra encodes on the full cohomology. For example, one could ask which Chern classes of sheaves and, in particular, characteristic classes are contained inside the Verbitsky component. 

One case we consider here is that of the second Chern class $c_2\coloneqq c_2(X)\in H^4(X,\bR)$. Maybe a priori counter-intuitively, it is not always contained in the Verbitsky component, see for example \cite[Lem.\ 1.5]{MarkmanBBFormChar} for the case of the Hilbert scheme of $n$ points on a K3 surface with $n>3$. Note that $c_2$ lies in the Verbitsky component if and only if it is a multiple of the class $\q\in H^4(X,\bQ)$, the dual of the Beauville--Bogomolov--Fujiki form. 

We answer completely the question when $c_2$ lies inside the Verbitsky component using the Riemann--Roch polynomial of $X$. Recall that for a class $\alpha \in H^{4k}(X,\bR)$ which remains of type $(2k,2k)$ on all small deformations of $X$, there exists a number $C(\alpha)$, called the \emph{generalized Fujiki constant} of $\alpha$, such that
\begin{equation}
   \forall \beta \in H^2(X,\bR)\quad C(\alpha) \cdot q_X(\beta)^{n-k} = \int_X \alpha \cdot \beta^{2n-2k}.
\end{equation}
Let $\td$ be the Todd class of $X$ and let $\td_{2k}$ be its degree $2k$ part. The Riemann--Roch polynomial of $X$ is defined as \[
\begin{aligned}
\RR_X(q)\coloneqq\sum_{i=0}^n\frac{C(\td_{2n-2i})}{(2i)!}q^i
&=\frac{C(1)}{(2n)!}q^n+
\frac{C(\td_2)}{(2n-2)!}q^{n-1}+\cdots+\frac{C(\td_{2n})}1\\
&\eqqcolon A_0q^n+A_1q^{n-1}+A_2q^{n-2}+\cdots+A_n.
\end{aligned}
\]
The Hirzebruch--Riemann--Roch theorem, whence the name, together with the property of the generalized Fujiki constants assert that this polynomial satisfies
\[
\RR_X(q_X(c_1(L))) = \chi(X,L)
\]
for all line bundles $L \in \Pic(X)$. 
In particular, we have $A_n=n+1$.

The following is the main result which, additionally, yields an upper bound on the second Betti number $b_2(X)$ under some conditions. 
\begin{theorem}
\label{thm:bound_b2_c2_Verbitsky}
Let $X$ be a hyperkähler manifold of dimension $2n$ with second Betti number
$b_2(X)$ and consider its Riemann--Roch polynomial
\[\RR_X(q)=A_0q^n+A_1q^{n-1}+A_2 q^{n-2}+\cdots.\]
If the first three coefficients satisfy the condition
\begin{equation}
\label{eq:hypothesis}
2nA_0A_2<(n-1)A_1^2,
\end{equation}
then we have the inequality
\begin{equation}
\label{eq:bound}
b_2(X) \le \frac1{1-\dfrac{2nA_0A_2}{(n-1)A_1^2}}-(2n-2),
\end{equation}
and equality holds if and only if $c_2\in\Sym^2 H^2(X,\bR)$. If the condition~\eqref{eq:hypothesis} does not hold, then $c_2$ is not contained in the Verbitsky component. 
\end{theorem}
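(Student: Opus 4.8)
The plan is to convert both \eqref{eq:hypothesis} and \eqref{eq:bound} into statements about generalized Fujiki constants and to reduce all three assertions to a single unconditional inequality. Since $c_1(X)=0$ one has $\td_2=\tfrac1{12}c_2$ and $\td_4=\tfrac1{720}(3c_2^2-c_4)$, so writing $c_X=C(1)$ for the ordinary Fujiki constant we have $A_0=c_X/(2n)!$, $A_1=C(c_2)/(12(2n-2)!)$ and $A_2=C(\td_4)/(2n-4)!$. Feeding these into \eqref{eq:hypothesis}--\eqref{eq:bound} and clearing factorials, the hypothesis becomes $C(c_2)^2>\tfrac{288(2n-3)}{2n-1}\,c_X\,C(\td_4)$, i.e.\ $x<1$ for $x\coloneqq 2nA_0A_2/((n-1)A_1^2)$, while \eqref{eq:bound} becomes $(b_2+2n-2)(1-x)\le1$. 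The upshot is that the bound, its equality case, and the final implication all follow from the single inequality
\begin{equation*}
(b_2+2n-3)\,\lambda^2\,C(\q)\ \le\ 288(2n-3)\,C(\td_4),\qquad \lambda\coloneqq C(c_2)/C(\q),
\tag{$\ast$}
\end{equation*}
which I claim holds for every $X$, with equality precisely when $c_2\in\SH(X,\bR)$ (equivalently $c_2\in\Sym^2H^2(X,\bR)$, by the introduction). Indeed $(\ast)$ is exactly $b_2+2n-3\le x(b_2+2n-2)$; under $x<1$ this rearranges to \eqref{eq:bound} with equality iff $c_2\in\SH$, and if instead $x\ge1$ then equality in $(\ast)$ would force $x=(b_2+2n-3)/(b_2+2n-2)<1$, so $c_2\notin\SH$.

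The computational backbone is to evaluate the Fujiki constants in $(\ast)$ by polarizing $\int_X\beta^{2n}=c_X\,q_X(\beta)^n$. Writing $\q=\sum_{ij}q^{ij}e_ie_j$ in a basis of $H^2(X,\bR)$ and contracting the quadratic pairings produced by the polarized relation, the traces collapse via $\sum_{ij}q^{ij}q_{ij}=b_2$, yielding
\begin{equation*}
C(\q)=c_X\frac{b_2+2n-2}{2n-1},\quad C(\q^2)=c_X\frac{(b_2+2n-2)(b_2+2n-4)}{(2n-1)(2n-3)},\quad C(\q\,\delta)=C(\delta)\frac{b_2+2n-4}{2n-3}
\end{equation*}
for any $\delta\in H^4$ that stays of type $(2,2)$. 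The decisive feature is the explicit $b_2$-dependence. Setting $c_2'\coloneqq c_2-\lambda\q$, the last formula with $\delta=c_2'$ gives $C(\q c_2')=0$; hence $c_2=\lambda\q+c_2'$ is orthogonal for the pairing $(\alpha,\gamma)\mapsto C(\alpha\gamma)$ and $C(c_2^2)=\lambda^2C(\q^2)+C((c_2')^2)$. By the characterization recalled in the introduction, $c_2\in\SH$ if and only if $c_2$ is a multiple of $\q$, i.e.\ if and only if $c_2'=0$.

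It remains to prove $(\ast)$. The natural engine is a Hodge--index/Cauchy--Schwarz positivity
\begin{equation*}
C((c_2')^2)\ge0,\qquad\text{with equality iff } c_2'=0,
\end{equation*}
expressing that the pairing $C(\alpha\gamma)$, restricted to the monodromy-invariant classes in $H^4$, is of definite sign transverse to the line $\langle\q\rangle$. The obstacle is that $(\ast)$ is phrased through $C(\td_4)$, hence through the two second-order constants $C(c_2^2)$ and $C(c_4)$, whereas the positivity above controls only the combination $C(c_2^2)-\lambda^2C(\q^2)=C((c_2')^2)$. Bridging this gap requires a universal formula expressing $C(c_4)$ (equivalently $C(\td_4)$) through $b_2$, $c_X$, $C(c_2)$ and the single deviation invariant $C((c_2')^2)$: unwinding $(\ast)$ shows that its equality case at $c_2\in\SH$ is consistent only if there $C(c_4)=C(c_2^2)\,\tfrac{b_2+2n-9}{2(b_2+2n-4)}$, so an identity of exactly this shape is what the argument needs.

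I expect this identification of the degree-$8$ characteristic data to be the crux, and to be where the general structure of characteristic classes and Rozansky--Witten theory enter: one must show that on a hyperkähler manifold the Fujiki constants of $c_2^2$ and $c_4$ are governed by $b_2$ together with the one invariant $C((c_2')^2)$ measuring the failure of $c_2$ to lie in $\SH$. Granting such a formula, the difference of the two sides of $(\ast)$ becomes a nonnegative multiple of $C((c_2')^2)$, so the positivity $C((c_2')^2)\ge0$ closes the proof and pins the equality case to $c_2'=0$; the bound, its equality clause, and the final implication then follow formally from the reduction of the first paragraph. The routine ingredients---polarizing the Fujiki relation, the trace contractions, and the factorial bookkeeping---I would isolate as preparatory lemmas.
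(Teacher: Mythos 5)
Your reduction is sound and matches the paper's architecture: the orthogonal decomposition $c_2=\lambda\q+c_2'$, the computation of $C(\q)$, $C(\q^2)$, $C(\q\,\delta)$ by polarizing the Fujiki relation (this is Proposition~\ref{prop:fujiki_of_q}), and the observation that everything collapses to a single inequality equivalent to $b_2+2n-3\le x(b_2+2n-2)$ are all exactly what the paper does (your $(\ast)$ is the paper's \eqref{eq:inequality_A} in disguise, and your consistency check $C(c_4)=C(c_2^2)\tfrac{b_2+2n-9}{2(b_2+2n-4)}$ at equality matches \eqref{eq:ineq_c2_2_and_c4_multiplied}). However, there is a genuine gap where you write ``granting such a formula'': the bridge from the Hodge--Riemann positivity $C((c_2')^2)\ge0$ (which controls only $C(c_2^2)$) to an inequality involving $C(\td_4)$ (hence $C(c_4)$) is not something you derive, and it is the one substantive external input of the proof. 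The paper supplies it as Corollary~\ref{cor:relation_td1/2}, the identity
\begin{equation*}
7C(c_2^2)-4C(c_4)=\frac{5(2n-1)}{2n-3}\,\frac{C(c_2)^2}{C(1)},
\end{equation*}
obtained by comparing third coefficients in Nieper-Wi\ss{}kirchen's factorization $\RR_{X,1/2}(q)=C(\td^{1/2}_{2n})(1+q/2r_X)^n$ (Theorem~\ref{thm:RR_td_1/2}, proved via the Wheeling Theorem in Rozansky--Witten theory). Note that this relation has a different shape from the one you postulate: it does not involve $b_2$ or the deviation invariant $C((c_2')^2)$ at all; it is a universal linear relation among $C(c_2^2)$, $C(c_4)$ and $C(c_2)^2/C(1)$, which together with $720(2n-4)!\,A_2=3C(c_2^2)-C(c_4)$ eliminates $C(c_4)$ and turns $C(c_2^2)\ge\lambda^2C(\q^2)$ into $(\ast)$. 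Without this input the theorem does not follow; your guess that Rozansky--Witten theory is where it comes from is correct, but guessing the provenance is not supplying the lemma.

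A secondary, smaller gap: you assert $C((c_2')^2)\ge0$ with equality iff $c_2'=0$ as a ``Hodge--index/Cauchy--Schwarz positivity'' without proof. The paper's argument (Proposition~\ref{prop:fundamental_inequality}) is short but not content-free: one must first show that $z=c_2'$ is \emph{primitive} of type $(2,2)$ for every Kähler class $\omega$ (because $z\cdot\omega^{2n-3}$ pairs to zero with all of $H^2$, hence vanishes), and only then does the Hodge--Riemann bilinear relation give $\int_X z^2\omega^{2n-4}\ge0$ with equality iff $z=0$. You should also justify that the $\SH$-projection of $c_2$ is a multiple of $\q$ (monodromy invariance), so that your $c_2'$ really is the component orthogonal to the whole Verbitsky component and not just to $\q$.
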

The theorem can also be phrased using generalized Fujiki constants of (products of) Chern classes. 
Namely, inequality~\eqref{eq:hypothesis} is equivalent to the condition that the generalized Fujiki constant $C(\ch_4)$ is positive or, expressed differently, that
\begin{equation}
\label{eq:ineq_introduction_ch_4}
C(c_2^2)>2C(c_4).
\end{equation}
This is satisfied if the polynomial $\RR_X(q)$ has $n$ distinct real roots, see Remark~\ref{rmk:RR_split}. In the case $n=2$ we always have $C(\ch_4)>0$, see \cite[Lem.\ 4.6]{OberdieckSongVoisin}. Writing $C(c_2^2) = \mu C(c_4)$ condition \eqref{eq:hypothesis} is equivalent to $\mu >2$ and the bound \eqref{eq:bound} becomes
\begin{equation}
b_2(X)\le 9-2n+\frac{10}{\mu-2}.    
\end{equation}

We show in Corollary~\ref{cor:td_2n-2_verbitskycomponent} that the above conditions are also necessary and sufficient for $\td^{1/2}_{2n-2}\in H^{4n-4}(X,\bR)$, i.e.\ the degree $2n-2$ component of the square root of the Todd class, to be contained in the Verbitsky component. 

Among known smooth hyperkähler manifolds, there are only two types of Riemann--Roch polynomials: the $\mathrm K3^{[n]}$-type and the $\Kum_n$-type ($\OG_6$ and $\OG_{10}$ fall into these two types, see~\cite{OrtizRRPolynomials}). On the other hand, Theorem~\ref{thm:bound_b2_c2_Verbitsky} can be generalized to singular symplectic varieties of dimension 4 and this gives rise to many more examples. We check that the inequality~\eqref{eq:ineq_introduction_ch_4} is satisfied for all known smooth examples, as well as for many singular examples, in Sections~\ref{sec:The_Inequality} and \ref{sec:orbifold_examples} respectively. 


In Section~\ref{sec:generl_fujiki_known_smooth}, we give an account of all generalized Fujiki constants for the known examples of smooth hyperkähler manifolds. In particular, we prove that when $X$ is of $\OG_6$ or $\OG_{10}$-deformation type, all Chern classes $c_{2i}$ satisfy
\[
c_{2i} \in \SH(X,\bR)
\]
and, thus, all characteristic classes of $X$ lie in the Verbitsky component.
This easily leads to the determination of the generalized Fujiki constants for all characteristic classes on these manifolds. 

In the final section, we further discuss generalized Fujiki constants and Riemann--Roch polynomials using Rozansky--Witten theory. We present a conceptual proof for the fact that the polynomial
\begin{equation*}
    \RR_{X,1/2}(q) \coloneqq \sum_{i=0}^n \frac{C(\td^{1/2}_{2n-2i})}{(2i)!}q^i
\end{equation*}
factorizes as an $n$-th power using the Wheeling Theorem and discuss how this method could be used in general to analyze the Riemann--Roch polynomial. This leads to conjectural relations between the generalized Fujiki constants. We mention here the degree four case which yields a precise value of $C(\ch_4)$. For another instance of these conjectural relations, see Conjecture~\ref{conj:Generalized_Fujiki_ch_8_via_RW}.
\begin{conjecture}
\label{conj:Mercedes_graph}
Let $X$ be a hyperkähler manifold of dimension $2n>2$. We have
\[
\frac{C(\ch_4)}{C(1)} = \frac{5(n+1)}{(2n-1)(2n-3)}.
\]
\end{conjecture}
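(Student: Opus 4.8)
The plan is to compute both $C(1)$ and $C(\ch_4)$ as values of the Rozansky--Witten weight system and then to isolate the single new ingredient that the proven factorization of $\RR_{X,1/2}$ does not already supply. Recall that $\ch_4 = \tfrac1{12}(c_2^2 - 2c_4)$ since $c_1 = c_3 = 0$, so that $C(\ch_4) = \tfrac1{12}(C(c_2^2) - 2C(c_4))$, and that in the Rozansky--Witten dictionary the degree-eight characteristic classes of $X$ are the images, under the hyperkähler weight system $b_X\colon\cB\to\bR$, of the two independent closed trivalent graphs of degree two: the disjoint union $\Theta\sqcup\Theta$, matching $c_2^2$, and the connected \emph{Mercedes} graph $M$, matching $\ch_4$. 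The conjecture is thus equivalent to the assertion that the connected Mercedes invariant, normalized against the volume $C(1)$, is the universal rational function $\tfrac{5(n+1)}{(2n-1)(2n-3)}$ of $n$.

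First I would extract what the factorization already gives. Writing $\RR_{X,1/2}(q) = \tfrac{C(1)}{(2n)!}(q+c)^n$ and comparing the coefficients of $q^{n-1}$ and $q^{n-2}$ yields, after eliminating the root $c$, the universal relation
\begin{equation*}
C(\td^{1/2}_4) = \frac{2n-1}{2(2n-3)}\cdot\frac{C(\td^{1/2}_2)^2}{C(1)}.
\end{equation*}
Since $\td = \widehat{A}$ when $c_1 = 0$, one has $\td^{1/2}_2 = \tfrac{c_2}{24}$ and $\td^{1/2}_4 = \tfrac{7c_2^2 - 4c_4}{5760}$, so that this becomes
\begin{equation*}
7\,C(c_2^2) - 4\,C(c_4) = \frac{5(2n-1)}{2n-3}\cdot\frac{C(c_2)^2}{C(1)}.
\end{equation*}
This is the ``wheels-only'' content of the Wheeling Theorem; it controls only the combination $7C(c_2^2) - 4C(c_4)$, and does so in terms of the \emph{non-universal} quantity $C(c_2)^2/C(1)$. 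Because the conjecture concerns the different combination $C(c_2^2) - 2C(c_4) = 12\,C(\ch_4)$, it cannot be a formal consequence of the factorization: a genuinely new relation is needed, namely the direct evaluation of $M$.

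The core of the argument would then be to evaluate the connected Mercedes invariant inside the graph algebra $\cB$. The strategy is to reduce $M$, modulo the antisymmetry and IHX relations, to a canonical form whose image under $b_X$ is forced to be a fixed rational function of $n$ times the theta data, and to read off $C(\ch_4)/C(1)$ by completing $M$ to a top-degree closed graph: adjoining $\Theta^{\sqcup(n-2)}$ and Wick-contracting the $2n-4$ insertions of $\beta$ through the Beauville--Bogomolov--Fujiki form. The $n$-dependence should then split as predicted, the denominator $(2n-1)(2n-3)$ arising from the descending pairing count of these insertions (the same mechanism visible in the factor $\tfrac{2n-1}{2n-3}$ above) and the numerator packaging the universal constant $5$ (from the $\widehat{A}$-normalization) with the factor $n+1 = \chi(X,\cO_X) = A_n$, which enters through the global constraint $\int_X \td = n+1$.

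The main obstacle is precisely this universality: unlike the factorization, which follows formally from Wheeling, the claim that the connected Mercedes invariant depends only on $n$, and not on the deformation type, does not. Establishing it appears to require either a new identity in graph homology pinning the degree-two connected part of $b_X$ to the volume, or an independent geometric computation of the $L^2$-norm of the curvature in the spirit of Hitchin--Sawon. A realistic intermediate goal is to fix the single universal constant by matching the only two known families of Riemann--Roch polynomials, of $\KKK^{[n]}$- and $\Kum_n$-type (into which $\OG_6$ and $\OG_{10}$ fall), and to verify the specialization $C(\ch_4)/C(1) = 5$ at $n=2$, where $C(\ch_4)>0$ is already known. A deformation-type-independent proof is what remains genuinely open, which is why the statement is posed as a conjecture.
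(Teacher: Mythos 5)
The statement you are addressing is posed in the paper as a conjecture, and the paper contains no proof of it: it offers only (i) a conditional argument (Proposition~\ref{prop:conj_RR_poly_implies_conj_C(ch_4)}) showing the identity follows if the Riemann--Roch polynomial has roots in arithmetic progression with common difference $2$ (Conjecture~\ref{conj:general_form_RR_polynomial}\,(2)), and (ii) consistency checks against the known deformation types. Your proposal is likewise not a proof, and you say so explicitly; judged as a research plan it is sound and closely parallels the paper's own Section~\ref{sec:further_discussions}. The parts you actually carry out are correct: eliminating the root from the factorization $\RR_{X,1/2}(q)=\frac{C(1)}{(2n)!}(q+c)^n$ does give $7C(c_2^2)-4C(c_4)=\frac{5(2n-1)}{2n-3}\cdot\frac{C(c_2)^2}{C(1)}$, which is exactly Corollary~\ref{cor:relation_td1/2}, and your observation that Wheeling controls only this linear combination --- not $C(c_2^2)-2C(c_4)=12\,C(\ch_4)$ --- is precisely the point the paper makes when it computes $\inner{w_4,\ell^2}=20\Theta_2$ and finds $b_{\Theta_2}=-\frac{4(2n-1)(2n-3)}{5}\cdot\frac{C(c_2^2-2c_4)}{C(1)}$ in Proposition~\ref{prop:RW_Theta2}; your target identity is equivalent to $b_{\Theta_2}=-48(n+1)$, as the paper notes.

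The genuine gap is the one you name yourself: the ``core of the argument'' paragraph describes what a proof would have to do rather than doing it. Reducing the connected degree-two trivalent graph to a canonical form modulo AS/IHX is easy (that space is one-dimensional), but this only says $b_{\Theta_2}$ is a single unknown scalar; nothing in graph homology or in the Fujiki formalism forces that scalar to be the universal value $-48(n+1)$, independent of the deformation type. Your proposed sources for the factors $(2n-1)(2n-3)$ and $n+1$ are plausible heuristics, but the second is not a derivation: $\int_X\td=n+1$ constrains $A_n$, not the coefficient $A_2$ that governs $C(\ch_4)$, and no identity linking the two is exhibited --- supplying one is essentially the content of Conjecture~\ref{conj:general_form_RR_polynomial}, which is itself open. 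Also note that at $n=2$ only the positivity $C(\ch_4)>0$ is known unconditionally (via Guan's bounds), not the exact value $5\,C(1)$; pinning down that value would already reduce the possible Betti numbers of hyperkähler fourfolds to the four cases of Corollary~\ref{cor:possible_Hodge_numbers_via_conjecture}, so it cannot be dismissed as a routine verification. In short, your write-up is a correct reformulation of the conjecture in Rozansky--Witten terms together with an accurate diagnosis of why it is open, which matches its status in the paper.
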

Note that, in particular, Conjecture~\ref{conj:Mercedes_graph} would imply \eqref{eq:ineq_introduction_ch_4}. 
We prove in Proposition~\ref{prop:conj_RR_poly_implies_conj_C(ch_4)} that the conjecture holds true if the Riemann--Roch polynomial satisfies certain expectations on its shape such as \cite[Conj.\ 1.3 (3)]{JiangRR} or Conjecture~\ref{conj:general_form_RR_polynomial}. We present a possible strategy towards proving these conjectures. 

We want to remark that we expect the inequality \eqref{eq:ineq_introduction_ch_4} to hold true pointwise on the level of forms for the right representative of $\ch_4$ and therefore be of local nature. In contrast, Conjecture~\ref{conj:Mercedes_graph} is of global nature. The distinction between these two expectations will occur frequently in the paper. 

If proven true, Conjecture~\ref{conj:Mercedes_graph} would imply that for hyperkähler fourfolds there are exactly two possible sets of values that the generalized Fujiki constants can take, see Proposition~\ref{prop:possible_Fujiki_dim4}. As a consequence, we obtain the following. 
\begin{corollary}
\label{cor:possible_Hodge_numbers_via_conjecture}
Assuming Conjecture~\ref{conj:Mercedes_graph} in dimension 4, the Betti numbers of a hyperkähler fourfold are one of the following: \begin{itemize}
    \item $b_2(X)=5, b_3(X)=0, b_4(X)=96$;
    \item $b_2(X)=6, b_3(X)=4, b_4(X)=102$;
    \item $b_2(X)=7, b_3(X)=8, b_4(X)=108$;
    \item $b_2(X)=23, b_3(X)=0, b_4(X)=276$.
\end{itemize}
\end{corollary}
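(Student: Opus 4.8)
The plan is to reduce the determination of the triple $(b_2,b_3,b_4)$ — which, together with $b_0=b_8=1$, $b_1=b_7=0$ and Poincaré duality $b_2=b_6$, $b_3=b_5$, governs the whole Betti table of a hyperkähler fourfold — to two numerical inputs: Salamon's linear relation among the $b_i$, and the two possible value sets for the generalized Fujiki constants furnished by Proposition~\ref{prop:possible_Fujiki_dim4}.

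First I would invoke Salamon's relation \cite{Salamon}, which in the case $n=2$ of the identity $2\sum_{k=1}^{2n}(-1)^k(3k^2-n)b_{2n-k}=n\,b_{2n}$ reads
\[
b_4 = 46 + 10\,b_2 - b_3.
\]
This eliminates $b_4$, and substituting it into $\chi_{\mathrm{top}}(X)=b_0-b_1+\dots+b_8=2+2b_2-2b_3+b_4$ yields
\[
\chi_{\mathrm{top}}(X) = 48 + 3\,(4b_2 - b_3).
\]
Since $\chi_{\mathrm{top}}(X)=\int_X c_4 = C(c_4)$ is a generalized Fujiki constant, knowing it is equivalent to knowing the single quantity $t\coloneqq 4b_2-b_3$; and then $b_3=4b_2-t$ together with $b_4=46+6b_2+t$ are both determined by $b_2$ alone.

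Next I would feed in Proposition~\ref{prop:possible_Fujiki_dim4}: under Conjecture~\ref{conj:Mercedes_graph} the generalized Fujiki constants of a hyperkähler fourfold realize one of exactly two value sets, the $\mathrm{K3}^{[2]}$-type and the $\Kum_2$-type ones. Reading off $C(c_4)$ and $\mu\coloneqq C(c_2^2)/C(c_4)$ from each gives $(\chi_{\mathrm{top}},\mu)=(324,\tfrac{23}{9})$ in the first case and $(\chi_{\mathrm{top}},\mu)=(108,7)$ in the second, whence $t=(\chi_{\mathrm{top}}-48)/3$ equals $92$ resp.\ $20$. Both values of $\mu$ exceed $2$, so condition~\eqref{eq:hypothesis} holds and Theorem~\ref{thm:bound_b2_c2_Verbitsky} gives $b_2\le 5+\tfrac{10}{\mu-2}$, that is $b_2\le 23$ resp.\ $b_2\le 7$. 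Combining this with $b_3=4b_2-t\ge 0$ — which forces $b_2\ge t/4$, i.e.\ $b_2\ge 23$ resp.\ $b_2\ge 5$ — and the integrality of $b_2$ leaves only $b_2=23$ in the $\mathrm{K3}^{[2]}$-type case and $b_2\in\{5,6,7\}$ in the $\Kum_2$-type case. Substituting back through $b_3=4b_2-t$ and $b_4=46+6b_2+t$ then produces exactly the four listed triples.

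I expect the substance to lie not in this enumeration, which is elementary once the inputs are assembled, but in correctly transcribing Salamon's relation with the right coefficients and in matching the two Fujiki value sets of Proposition~\ref{prop:possible_Fujiki_dim4} to the pairs $(\chi_{\mathrm{top}},\mu)$. I would also record, as a consistency check rather than a logical step, the equality clause of Theorem~\ref{thm:bound_b2_c2_Verbitsky}: the two extremal cases $b_2=23$ and $b_2=7$ are precisely those for which $c_2\in\Sym^2 H^2(X,\bR)$, in agreement with the known $\mathrm{K3}^{[2]}$- and $\Kum_2$-examples, whereas the intermediate values $b_2\in\{5,6\}$ would force $c_2\notin\Sym^2 H^2(X,\bR)$.
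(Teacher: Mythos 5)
Your proof is correct, and it follows the same skeleton as the paper's: Salamon's relation packaged as $\chi_{\mathrm{top}}=c_4=48+12b_2-3b_3$, combined with the two admissible values $c_4\in\{324,108\}$ from Proposition~\ref{prop:possible_Fujiki_dim4} and the constraint $b_3\ge 0$. The one genuine difference is where the upper bound on $b_2$ comes from. The paper at this point simply invokes ``previously obtained bounds from Guan,'' whereas you extract $\mu=C(c_2^2)/C(c_4)\in\{23/9,7\}$ from the two Fujiki value sets and apply the paper's own Theorem~\ref{thm:bound_b2_c2_Verbitsky} (in the form of Remark~\ref{rmk:C_ch4}) to get $b_2\le 23$ resp.\ $b_2\le 7$. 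This buys you something concrete: the bare bound ``$b_2\le 8$ or $b_2=23$'' would leave the spurious fifth triple $(b_2,b_3,b_4)=(8,12,114)$ alive in the $\Kum_2$-type case, and your route eliminates it without appealing to Guan's finer restrictions on $b_3$, making the argument self-contained modulo the paper's earlier results. Your closing consistency check via the equality clause of Theorem~\ref{thm:bound_b2_c2_Verbitsky} (the extremal cases $b_2=23$ and $b_2=7$ are exactly those with $c_2\in\Sym^2H^2(X,\bR)$) matches the discussion in the paper's introduction. All the arithmetic — the coefficients in Salamon's relation, $\mu=23/9$ and $\mu=7$, and the resulting triples — checks out.
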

Hence, Conjecture~\ref{conj:Mercedes_graph} would reduce the number of possible Hodge diamonds and LLV decompositions of hyperkähler fourfolds to four. The two known cases are the ones where $c_2$ lies in the Verbitsky component. In the case $b_2(X)=7$, there are 80 trivial representations of the LLV algebra in $H^{2,2}$, whereas there are 81 trivial representations when the second Betti number is smaller than seven. 

In the recent work~\cite[Thm.\ 9.3]{DHMV}, the authors obtained a similar result under a different assumption. We remark that the condition in our Conjecture~\ref{conj:Mercedes_graph} is stronger but makes no explicit assumption on the lattice $H^2(X,\bZ)$. It focuses only on numerical properties of the Riemann--Roch polynomial.
\subsection*{Relation to other work} While working on further results related to the topic of the paper we
learned about the recent preprint of Justin Sawon
\cite{Sawon2021} who independently obtained the same bound on the second Betti number as
in Theorem~\ref{thm:bound_b2_c2_Verbitsky}. The pointwise conjectural
relations in Section~\ref{sec:further_discussions} have a similar flavor as the ones in \cite[Sec.\ 2]{Sawon2021}. 

\subsection*{Acknowledgements} We are grateful to our supervisors Olivier Debarre and Daniel Huybrechts for their support and encouragement, and we thank Georg Oberdieck and \'Angel David R\'ios Ortiz for helpful conversations, as well as Lie Fu, Grégoire Menet, and Justin Sawon for useful comments. The first named author is funded by the IMPRS program of the Max--Planck--Society. 
\section{The inequality}
\label{sec:The_Inequality}
We prove Theorem~\ref{thm:bound_b2_c2_Verbitsky} in this section.
Let $X$ be a hyperkähler manifold of complex dimension $2n$ with $n\ge 2$.
We first recall the following result by Fujiki \cite{FujikiCohomology} and Huybrechts~\cite{HuybrechtsHKBasicResults}. 
\begin{theorem}[Fujiki, Huybrechts]
\label{thm:fujiki}
Let $\alpha\in H^{4k}(X,\bR)$ be a class that remains of type $(2k,2k)$ on all small
deformations of $X$ (for example, all characteristic classes satisfy this
condition). Then there exists a constant $C(\alpha)\in\bR$, called the {\em
generalized Fujiki constant of $\alpha$}, such that
\[
\forall \beta\in H^2(X,\bR)\quad C(\alpha)\cdot
q_X(\beta)^{n-k} = \int_X \alpha\cdot \beta^{2n-2k}.
\]
\end{theorem}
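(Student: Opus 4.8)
The plan is to regard both sides of the asserted identity as homogeneous polynomials on $H^2(X,\bR)$ and to prove that they are proportional. The case $k=n$ is immediate, since there the equation reads $C(\alpha)=\int_X\alpha$, so assume $k<n$. Define $f(\beta)\coloneqq\int_X\alpha\cdot\beta^{2n-2k}$; this is a homogeneous polynomial function of degree $2(n-k)$ on $H^2(X,\bR)$, which I extend $\bC$-linearly to a polynomial on $V\coloneqq H^2(X,\bC)$. The form $q_X$ likewise extends to a nondegenerate quadratic polynomial on $V$, and since $b_2(X)\ge 3$ this polynomial is irreducible. The goal is then to establish the divisibility $q_X^{\,n-k}\mid f$ in the polynomial ring on $V$; comparing degrees forces $f=C\cdot q_X^{\,n-k}$ for a single constant $C$, which is real because $f$ and $q_X$ take real values on $H^2(X,\bR)$. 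This $C$ is the desired $C(\alpha)$.

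The key local input is a Hodge-type computation on the small deformations. For a complex structure $I$ in the local deformation family of $X$, write $\sigma_I\in V$ for the class of the holomorphic symplectic form, so that $\sigma_I$ spans $H^{2,0}_I$, is isotropic ($q_X(\sigma_I)=0$), and satisfies $q_X(\sigma_I,\overline{\sigma_I})>0$. Because $\alpha$ remains of type $(2k,2k)$ with respect to $I$ while $\sigma_I^{\,a}\,\overline{\sigma_I}^{\,b}$ is of type $(2a,2b)$, the integral $\int_X\alpha\cdot\sigma_I^{\,a}\,\overline{\sigma_I}^{\,b}$ vanishes unless $a=b=n-k$. Expanding $f$ on the plane $P_I\coloneqq\langle\sigma_I,\overline{\sigma_I}\rangle$ by the binomial theorem, only the middle term survives, giving $f(x\sigma_I+y\,\overline{\sigma_I})=\binom{2n-2k}{n-k}\lambda_I\,(xy)^{n-k}$ with $\lambda_I=\int_X\alpha\cdot\sigma_I^{\,n-k}\,\overline{\sigma_I}^{\,n-k}$; since $q_X(x\sigma_I+y\,\overline{\sigma_I})=2q_X(\sigma_I,\overline{\sigma_I})\,xy$, this shows $f|_{P_I}=C_I\,(q_X|_{P_I})^{n-k}$ for a constant $C_I$, and in particular $f(\sigma_I)=0$.

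To globalize, I invoke Beauville's local Torelli theorem: the period map from the Kuranishi space to the period domain $\Omega=\{[\sigma]\in\bP(V):q_X(\sigma)=0,\ q_X(\sigma,\overline{\sigma})>0\}$ is a local isomorphism, so the periods $\sigma_I$ arising from small deformations fill an open, hence Zariski-dense, subset of the quadric $Q=\{q_X=0\}$. As $f$ is a polynomial vanishing on this dense set, it vanishes on all of $Q$, and irreducibility of $q_X$ yields $q_X\mid f$. I propagate this by induction: if $f=q_X^{\,j}g_j$ with $j<n-k$, restricting to $P_I$ and cancelling the nonzerodivisor $(q_X|_{P_I})^{\,j}$ gives $g_j|_{P_I}=C_I\,(q_X|_{P_I})^{\,n-k-j}$, which still vanishes at $\sigma_I$; hence $g_j$ vanishes on $Q$ and $q_X\mid g_j$, i.e.\ $q_X^{\,j+1}\mid f$. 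After $n-k$ steps the quotient has degree zero and equals the constant $C=C(\alpha)$.

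The step I expect to require the most care is the globalization: one must ensure that the complex structures over which $\alpha$ genuinely stays of type $(2k,2k)$ are numerous enough for their periods to be Zariski-dense in $Q$. For characteristic classes, which are the case of interest here, this is automatic, since they remain of type $(2k,2k)$ on every small deformation and local Torelli applies directly; for a general $\alpha$ satisfying the hypothesis one must simply observe that the assumption already furnishes an open set of admissible periods. The remaining ingredients—the type computation and the passage from vanishing on $Q$ to divisibility by the irreducible form $q_X$—are then routine.
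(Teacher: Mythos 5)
The paper does not prove this statement: it is recalled as a known result of Fujiki and Huybrechts, with the proof deferred to the cited references. Your argument is the standard one from that literature (type decomposition forces $f$ to vanish on the period points, local Torelli makes these Zariski-dense in the quadric $\{q_X=0\}$, irreducibility of $q_X$ for $b_2\ge 3$ gives divisibility, and the restriction to the plane $\langle\sigma_I,\overline{\sigma_I}\rangle$ drives the induction up to $q_X^{\,n-k}\mid f$), and it is correct as written.
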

\begin{remark}
The term \emph{Fujiki constant} is reserved for the value $C(1)=C(1_X)$.
There is also the notion of \emph{small Fujiki constant} $c_X$: it differs from $C(1)$ by a constant multiple
\[
C(1) = \frac{(2n)!}{2^nn!}c_X=(2n-1)!!\cdot c_X.
\]
For example, it is known that $c_{\mathrm K3^{[n]}}=1$ and $c_{\Kum_n}=n+1$.
\end{remark}
Denote by $\q\in \Sym^2H^2(X,\bR)$ the dual of the Beauville--Bogomolov--Fujiki
form, and by $\SH(X,\bR)\subset H^\ast(X,\bR)$ the Verbitsky
component, which is the subalgebra generated by $H^2(X,\bR)$. The key step to Theorem~\ref{thm:bound_b2_c2_Verbitsky} is the following result.
\begin{proposition}
\label{prop:fundamental_inequality}
We have the following inequality
\begin{equation}
\label{eq:inequality_c2_square}
C(c_2^2)\ge \frac{C(c_2)^2}{C(\q)^2}C(\q^2),
\end{equation}
where equality holds if and only if $c_2\in \Sym^2H^2(X,\bR)$.
\end{proposition}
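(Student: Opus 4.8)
The plan is to reduce the inequality to a single positivity statement by decomposing $c_2$ along $\q$, where the two inputs are a multiplicativity property of the generalized Fujiki constant under cup product with $\q$, and a Hodge--Riemann positivity. First I would prove a multiplicativity lemma: for every $\alpha\in H^{4k}(X,\bR)$ remaining of type $(2k,2k)$ on all small deformations,
\[
C(\q\cdot\alpha)=\mu_{n,k}\,C(\alpha),\qquad \mu_{n,k}=\frac{b_2(X)+2(n-k)-2}{2(n-k)-1},
\]
with $\mu_{n,k}$ independent of $\alpha$. To see this I consider $P_\alpha(\beta)=\int_X\alpha\,\beta^{2n-2k}=C(\alpha)\,q_X(\beta)^{n-k}$ and apply the Beauville--Bogomolov--Fujiki Laplacian $\Delta=\sum q^{ab}\partial_{x_a}\partial_{x_b}$ in the variable $\beta$. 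Writing $\q=\sum q^{ab}e_ae_b$ gives $\Delta_\beta(\beta^{2n-2k})=(2n-2k)(2n-2k-1)\,\q\,\beta^{2n-2k-2}$, while the standard identity $\Delta\,q_X(\beta)^{m}=2m\bigl(2m+b_2-2\bigr)q_X(\beta)^{m-1}$ evaluates the right-hand side; comparing the two expressions yields $\mu_{n,k}$.

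With the lemma in hand, set $\lambda\coloneqq C(c_2)/C(\q)$, which is well defined since $C(\q)=\mu_{n,0}C(1)>0$, and put $\gamma\coloneqq c_2-\lambda\q$, so that $C(\gamma)=C(c_2)-\lambda C(\q)=0$. Expanding and using $C(\q\gamma)=\mu_{n,1}C(\gamma)=0$ to kill the cross term, I obtain
\[
C(c_2^2)=\lambda^2C(\q^2)+2\lambda\,C(\q\gamma)+C(\gamma^2)=\frac{C(c_2)^2}{C(\q)^2}C(\q^2)+C(\gamma^2).
\]
Thus the proposition is exactly equivalent to the positivity $C(\gamma^2)\ge 0$, with equality precisely when $\gamma=0$, i.e.\ when $c_2\in\bR\,\q\subseteq\Sym^2H^2(X,\bR)$.

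For the positivity I would fix one hyperkähler complex structure on $X$ with a Kähler class $\omega$, on which $\gamma$ is a genuine real $(2,2)$-class, and use the Lefschetz decomposition $\gamma=\eta_4+\omega\,\eta_2+\omega^2\eta_0$ into primitive components. Since $C(\gamma^2)\,q_X(\omega)^{n-2}=\int_X\gamma^2\,\omega^{2n-4}$ and the cross terms between distinct primitive pieces vanish, once I show $\eta_0=\eta_2=0$ the expression reduces to $\int_X\eta_4^2\,\omega^{2n-4}$, which by the Hodge--Riemann bilinear relations is $\ge 0$ and is strictly positive unless $\eta_4=0$. The vanishing $\eta_0=0$ is immediate from $C(\gamma)=0$: pairing $\gamma$ with $\omega^{2n-2}$ and using primitivity of $\eta_4$ and $\eta_2$ leaves only $\eta_0\int_X\omega^{2n}$, forcing $\eta_0=0$.

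The hard part is the vanishing of the primitive $(1,1)$-part $\eta_2$, which is where both the hyperkähler nature of $X$ and the monodromy invariance of $c_2$ are essential (without it the negative-definite Hodge--Riemann sign on primitive $(1,1)$-classes would obstruct positivity). The class $\gamma=c_2-\lambda\q$ is invariant under the monodromy group $\Gamma$, which is Zariski-dense in $\operatorname{O}(H^2,q_X)$ (and $b_2\ge 3$). The assignment $\omega\mapsto\eta_2(\omega)$ extracting the primitive $(1,1)$-part is given by rational operations in $\omega$ and cup product, is homogeneous of degree $-1$, and is natural under monodromy, so $\eta_2(g\omega)=g\,\eta_2(\omega)$ for $g\in\Gamma$. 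By Zariski-density this upgrades to $\operatorname{SO}(H^2,q_X)$-equivariance of the corresponding rational map $H^2\to H^2$, and the only such equivariant maps are the radial ones $\omega\mapsto f(q_X(\omega))\,\omega$; hence $\eta_2(\omega)\in\bR\,\omega$. Since $\eta_2$ is primitive, it is $q_X$-orthogonal to $\omega$, which forces $\eta_2=0$. The same invariant-theoretic input pins down the equality case: the $\operatorname{SO}(H^2,q_X)$-invariants of the degree-four Verbitsky component $\Sym^2H^2(X,\bR)$ are exactly $\bR\,\q$, so $\gamma=0$ holds if and only if $c_2\in\Sym^2H^2(X,\bR)$, matching the stated condition.
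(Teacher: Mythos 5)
Your proof is correct, and it reaches the inequality by a genuinely different route in the one step that carries all the content. Both arguments share the same skeleton---decompose $c_2$, kill the cross term, and reduce to positivity of the Fujiki constant of the square of the residue via the Hodge--Riemann relations---but the paper projects $c_2$ orthogonally onto the \emph{entire} Verbitsky component, writing $c_2=a\q+z$ with $z\in\SH(X,\bR)^\perp$. That choice makes $z$ automatically primitive for every Kähler class in one line: $z\cdot\omega^{2n-3}\in H^{4n-2}(X,\bR)$ pairs to zero with all of $H^2(X,\bR)$ because $\omega^{2n-3}\beta$ lies in the Verbitsky component, so it vanishes by Poincaré duality, and Hodge--Riemann then applies directly to $z$. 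You instead subtract only the multiple of $\q$ normalized by Fujiki constants, so your residue $\gamma$ is a priori only ``numerically'' orthogonal ($C(\gamma)=0$), and you must separately kill the primitive $(1,1)$-part $\eta_2$ of its Lefschetz decomposition---without which the Hodge--Riemann sign on that piece would point the wrong way, as you correctly note. Your equivariance argument for $\eta_2=0$ does work, but it imports Zariski density of the monodromy group in $\operatorname{O}(H^2,q_X)$ (a theorem of Verbitsky) together with classical invariant theory, which is much heavier machinery than the paper needs here; a posteriori your $\gamma$ equals the paper's $z$, and you could have obtained $\eta_2=0$ more cheaply by observing that the orthogonal projection of $c_2$ to $\SH^4(X,\bR)$ is deformation-invariant, hence a multiple of $\q$ whose coefficient is forced to be your $\lambda$. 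Your Laplacian derivation of $C(\q\cdot\alpha)=\mu_{n,k}\,C(\alpha)$ is a clean and correct alternative to the basis computation in Proposition~\ref{prop:fujiki_of_q}, and your treatment of the equality case matches the statement.
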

\begin{proof}
We write
\[
c_2=a\q+z\quad\text{where }a\in\bR,z\in \SH(X,\bR)^\perp.
\]
In other words, we project $c_2$ orthogonally to the Verbitsky component and
let $a\q$ be its image. Then we have
\[
C(c_2)=C(a\q),\quad\text{so } a=\frac{C(c_2)}{C(\q)}.
\]
Now we consider the square $c_2^2=a^2\q^2+2a\q z+z^2\in H^8(X,\bR)$. Since the class $z$ is
in $\SH(X,\bR)^\perp$, it is orthogonal to the image of
$\Sym^{2n-2}H^2(X,\bR)$, so the class $\q z$ is orthogonal to the image of
$\Sym^{2n-4}H^2(X,\bR)$ and also lies in
$\SH(X,\bR)^\perp$.

On the other hand, for any Kähler class $\omega\in H^2(X,\bR)$, since $z$ lies $\SH(X,\bR)^\perp$, the class $z\cdot\omega^{2n-3}\in H^{4n-2}(X,\bR)$ is orthogonal to the entire $H^2(X,\bR)$ hence must vanish. So the class $z$ is primitive of type $(2,2)$ with respect to all Kähler classes on $X$.
By the
Hodge--Riemann bilinear relations, for a Kähler class $\omega\in H^2(X,\bR)$ we have
\[
\int_X z^2\cdot \omega^{2n-4}\ge 0,\quad\text{hence }C(z^2)\ge0,
\]
where equality holds if and only if $z=0$, that is, if $c_2\in
\Sym^2H^2(X,\bR)$.
In other words, the projection of $z^2$ to the Verbitsky component is
non-trivial, unless $z$ is itself trivial. Therefore we obtain the desired inequality
\[
C(c_2^2)=a^2C(\q^2)+C(z^2)\ge a^2C(\q^2)=
\frac{C(c_2)^2}{C(\q)^2}C(\q^2),
\]
where equality holds if and only if $c_2\in \Sym^2H^2(X,\bR)$.
\end{proof}

We now study the values of the various generalized Fujiki constants that appear
in~\eqref{eq:inequality_c2_square}.

\begin{proposition}
\label{prop:fujiki_of_q}
Let $X$ be a hyperkähler manifold of dimension $2n$ with second Betti number
$b\coloneqq b_2(X)$. For any $\alpha\in H^{4k}(X,\bR)$ that is of type $(2k,2k)$
on all small deformations of $X$, we have
\[
C(\q\cdot\alpha) = \frac{b+2n-2k-2}{2n-2k-1}C(\alpha).
\]
In particular, we get
\[
C(\q^{k})=\frac{b+2n-2k}{1+2n-2k}C(\q^{k-1})
=\prod_{i=1}^{k}\frac{b+2n-2i}{1+2n-2i}\cdot C(1).
\]
\end{proposition}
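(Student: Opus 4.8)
The plan is to turn the defining property of the generalized Fujiki constant into an identity between polynomials in the coordinates of $H^2(X,\bR)$, and then to reduce the whole statement to a single Laplacian computation for the quadratic form $q_X$. Fix a basis $e_1,\dots,e_b$ of $H^2(X,\bR)$, let $(q_{ij})$ be the Gram matrix of $q_X$ and $(q^{ij})$ its inverse, so that the dual class is $\q=\sum_{i,j}q^{ij}e_ie_j\in H^4(X,\bR)$. For a variable class $\beta=\sum_l x_le_l$ I regard
\[
F(x)\coloneqq\int_X\alpha\cdot\beta^{2n-2k}
\]
as a homogeneous polynomial of degree $2n-2k$ in the $x_l$; by Theorem~\ref{thm:fujiki} it equals $C(\alpha)\,q_X(\beta)^{n-k}$, where $q_X(\beta)=\sum_{i,j}q_{ij}x_ix_j$.

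First I would differentiate $F$ twice in the $x_l$ and contract with the inverse form. Using $\partial_{x_i}\partial_{x_j}\beta^{2n-2k}=(2n-2k)(2n-2k-1)\,\beta^{2n-2k-2}e_ie_j$ and differentiating under the integral, I get
\begin{align*}
\sum_{i,j}q^{ij}\,\partial_{x_i}\partial_{x_j}F
&=(2n-2k)(2n-2k-1)\int_X\q\cdot\alpha\cdot\beta^{2n-2k-2}\\
&=(2n-2k)(2n-2k-1)\,C(\q\cdot\alpha)\,q_X(\beta)^{n-k-1},
\end{align*}
where the last equality is Theorem~\ref{thm:fujiki} applied to $\q\cdot\alpha$, which is again of type $(2k+2,2k+2)$ on all small deformations.

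Next I would compute the same contraction directly from $F=C(\alpha)\,q_X(\beta)^{n-k}$ by means of the Laplacian $\Delta_q\coloneqq\sum_{i,j}q^{ij}\partial_{x_i}\partial_{x_j}$. The two elementary contraction identities $\sum_{i,j}q^{ij}q_{ij}=\tr(\Id)=b$ and $\sum_{i,j}q^{ij}\,q_X(e_i,\beta)\,q_X(e_j,\beta)=q_X(\beta)$ (both immediate from $(q^{ij})=(q_{ij})^{-1}$, where $q_X(\cdot,\cdot)$ denotes the associated bilinear form) give, for every $m$,
\[
\Delta_q\bigl(q_X(\beta)^m\bigr)=2m\,(b+2m-2)\,q_X(\beta)^{m-1}.
\]
Taking $m=n-k$ and comparing the two expressions for $\sum_{i,j}q^{ij}\partial_{x_i}\partial_{x_j}F$, the common factor $q_X(\beta)^{n-k-1}$ cancels and, since $2(n-k)=2n-2k$, I obtain the first formula
\[
C(\q\cdot\alpha)=\frac{2(n-k)(b+2n-2k-2)}{(2n-2k)(2n-2k-1)}\,C(\alpha)=\frac{b+2n-2k-2}{2n-2k-1}\,C(\alpha).
\]
The statement about $C(\q^k)$ then follows by induction: applying this with $\alpha=\q^{k-1}\in H^{4(k-1)}$ (so the index is $k-1$) gives $C(\q^k)=\frac{b+2n-2k}{1+2n-2k}\,C(\q^{k-1})$, and telescoping down to $C(1)$ yields the displayed product.

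The contraction identities and the polynomial bookkeeping are routine. The one step that genuinely needs care, and which I regard as the main point, is the passage from the single scalar identity of Theorem~\ref{thm:fujiki} to its second-derivative version: one must justify that $F(x)$ really is the polynomial $C(\alpha)\,q_X(\beta)^{n-k}$ and that its coordinate derivatives compute insertions of the $e_i$, i.e.\ that the Fujiki relation may be freely polarized and differentiated. Once this is in place, the result is purely a computation with the quadratic form, using no geometry beyond the Fujiki relation itself.
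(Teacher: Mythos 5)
Your proof is correct and is essentially the same argument as the paper's: both polarize the Fujiki identity $\int_X\alpha\cdot\beta^{2n-2k}=C(\alpha)q_X(\beta)^{n-k}$ to second order and contract against the dual form, the paper doing this in a diagonalizing basis for $q_X$ by comparing the $t^2$-coefficient of $\int_X\alpha\cdot(e_1+te_i)^{2n-2k}$, while you package the identical computation invariantly as the $q$-Laplacian $\sum_{i,j}q^{ij}\partial_{x_i}\partial_{x_j}$ applied to both sides. The polarization step you flag is harmless, since both sides are genuine polynomials on $H^2(X,\bR)$ agreeing everywhere, hence equal with all their derivatives.
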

\begin{proof}
Take a basis $(e_1,\dots,e_b)$ of $H^2(X,\bR)$ such that
\[\q=e_1^2+e_2^2+e_3^2-e_4^2-\cdots-e_b^2.\]
Writing $s_i\coloneqq q_X(e_i)\in\set{\pm1}$, we have
\[
\begin{aligned}
C(\q\cdot \alpha)=\int_X \q\cdot \alpha\cdot e_1^{2n-2k-2}
&=\int_X\alpha\cdot(e_1^{2n-2k}+e_1^{2n-2k-2}e_2^2+ \cdots
-e_1^{2n-2k-2}e_b^2)\\
&=C(\alpha)+\sum_{i>1}s_i\int_X \alpha \cdot e_1^{2n-2k-2}e_i^2.
\end{aligned}
\]
For each term $e_1^{2n-2k-2}e_i^2$, consider the function
\[
t\mapsto \int_X \alpha\cdot (e_1+te_i)^{2n-2k}=
C(\alpha)\cdot(1+t^2s_i)^{n-k},
\]
which is a polynomial in $t$. Comparing the coefficients of $t^2$, we get
\[
\binom{2n-2k}2\int_X\alpha\cdot e_1^{2n-2k-2}e_i^2=C(\alpha)\cdot (n-k)s_i.
\]
So we have
\[
C(\q\cdot\alpha) = C(\alpha)+\sum_{i>1}s_i \frac{C(\alpha)s_i}{2n-2k-1}
= C(\alpha)+(b-1)\frac{C(\alpha)}{2n-2k-1}
= \frac{b+2n-2k-2}{2n-2k-1}C(\alpha),
\]
where we used the fact that $s_i^2=1$.
\end{proof}

We use the above description to replace $C(\q)$ and $C(\q^2)$ in~\eqref{eq:inequality_c2_square} and get
\begin{equation}
\label{eq:inequality_fujiki}
C(c_2^2)\ge
\frac{(2n-1)(b_2(X)+2n-4)C(c_2)^2}{(2n-3)(b_2(X)+2n-2)C(1)}.
\end{equation}

On the other hand, we have the following result by Nieper-Wißkirchen~\cite{NieperWissHRRonIHS}, which generalizes the work of Hitchin--Sawon~\cite{HitchinSawon}. In particular, it produces linear relations among certain generalized Fujiki constants. We will present a proof of the theorem in Section~\ref{sec:proof_RR_td_1/2}.
\begin{theorem}
\label{thm:RR_td_1/2}
Let $X$ be a hyperkähler manifold of dimension $2n$. Consider the following polynomial
\[
\begin{aligned}
\RR_{X,1/2}(q)&\coloneqq\sum_{i=0}^n\frac{C(\td^{1/2}_{2n-2i})}{(2i)!}q^i\\
&=\frac{C(1)}{(2n)!}q^n+
\frac{C(\frac1{24}c_2)}{(2n-2)!}q^{n-1}+
\frac{C(\frac7{5760}c_2^2-\frac1{1440}c_4)}{(2n-4)!}q^{n-2}+\cdots+\frac{C(\td^{1/2}_{2n})}1.
\end{aligned}
\]
There exists a constant $r_X$ such that this polynomial factorizes as
\[
\RR_{X,1/2}(q)=C(\td^{1/2}_{2n})\left(1+\frac1{2r_X} q\right)^n.
\]
\end{theorem}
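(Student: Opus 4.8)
The plan is to first reduce the statement to a single generating-function identity, and then to establish the $n$-th power shape using Rozansky--Witten theory. The reduction is immediate from the Fujiki property: since the odd Chern classes of a hyperkähler manifold vanish, the square root of the Todd class has components $\td^{1/2}_{2m}\in H^{4m}(X,\bR)$ only in degrees divisible by four, and for $\beta\in H^2(X,\bR)$ the Fujiki property gives $\int_X \td^{1/2}_{2(n-i)}\cdot\beta^{2i}=C(\td^{1/2}_{2n-2i})\,q_X(\beta)^i$. Collecting the top-degree terms of $\td^{1/2}(X)\cdot e^\beta$ therefore yields
\[
\int_X \td^{1/2}(X)\,e^{\beta}=\sum_{i=0}^n\frac{C(\td^{1/2}_{2n-2i})}{(2i)!}\,q_X(\beta)^i=\RR_{X,1/2}(q_X(\beta)).
\]
Consequently the theorem is equivalent to the assertion that the characteristic-number generating function $\beta\mapsto\int_X\td^{1/2}(X)\,e^\beta$ equals $C(\td^{1/2}_{2n})\bigl(1+\tfrac1{2r_X}q_X(\beta)\bigr)^n$ for a suitable constant $r_X$; comparing the coefficient of $q^1$ forces $r_X=nC(\td^{1/2}_{2n})/C(\td^{1/2}_2)=24\,nC(\td^{1/2}_{2n})/C(c_2)$, so the only real content is the multiplicativity that produces the perfect $n$-th power.

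Next I would pass to the Rozansky--Witten weight system. To a hyperkähler manifold $X$ of dimension $2n$ one associates a linear functional $b_X$ on the graph algebra, which evaluates a Jacobi diagram by contracting copies of the curvature tensor through the holomorphic symplectic form and integrating over $X$. The crucial dictionary entries are: the characteristic class $\td^{1/2}(X)=\hat A(X)^{1/2}$ corresponds precisely to the wheels element $\Omega=\exp\bigl(\sum_{k\ge1}b_{2k}\,\omega_{2k}\bigr)$ of the Wheeling Theorem, where $\omega_{2k}$ is the wheel with $2k$ legs and the $b_{2k}$ are the modified Bernoulli numbers occurring in $\log\hat A$ --- it is exactly the appearance of the square root that makes $\td^{1/2}$, rather than $\td$, match this distinguished element; the factor $e^\beta$ corresponds to exponentiating struts (legs) decorated by the class $\beta$; and $\int_X(-)$ corresponds to closing up all legs against the symplectic pairing and applying $b_X$. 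Under this dictionary the quantity $\int_X\td^{1/2}(X)\,e^\beta$ becomes the evaluation of $\Omega$ glued to the exponentiated $\beta$-struts.

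The heart of the argument is then the Wheeling Theorem. The element $\Omega$ is exactly the one for which wheeling is an algebra isomorphism, and this multiplicativity is what forces the evaluation to factorize: after wheeling, the closed diagrams organize into $n$ mutually commuting elementary contributions --- heuristically one for each of the $n$ quaternionic directions fixed by the $\mathrm{Sp}(n)$ holonomy --- each contributing the same linear factor $1+\tfrac1{2r_X}q_X(\beta)$, with overall $\beta$-independent normalization $C(\td^{1/2}_{2n})$. Equivalently, $\log$ of the generating function counts connected closed diagrams, and wheeling reduces these to a single series whose sum is $n\log(1+\tfrac1{2r_X}q_X(\beta))$, which is the $n$-th power statement. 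I would then extract $r_X$ from the elementary wheel (the $c_2$-contribution) and match it with the coefficient computation above.

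The main obstacle is the bookkeeping in this last step: one must set up the decorated graph algebra precisely, verify that $\td^{1/2}$ really maps to the wheeling element with the correct Bernoulli normalization, and --- most delicately --- show that closing the $\beta$-decorated legs against the symplectic form yields exactly $n$ identical factors rather than merely some polynomial in $q_X(\beta)$. This is where the full strength of the Wheeling Theorem, as opposed to a formal manipulation of characteristic classes, is needed; Hitchin--Sawon's result is the special case recording only the top coefficient $C(\td^{1/2}_{2n})$, and the point of Nieper-Wißkirchen's theorem is precisely to promote this to the full factorization. An alternative, more elementary route would be to prove the equivalent binomial relations $C(\td^{1/2}_{2n-2i})/(2i)!=\binom ni (2r_X)^{-i}C(\td^{1/2}_{2n})$ directly among the Fujiki constants, but I expect this to be hard to carry out without reintroducing the same graph-homology multiplicativity in disguise.
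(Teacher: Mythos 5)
Your high-level strategy coincides with the paper's: both rest on the Rozansky--Witten weight system, the identification $\RW_\sigma(\Omega)=\td^{1/2}$, and the multiplicativity of $\inner{\Omega,-}$ from the Wheeling Theorem. But the step you defer as ``bookkeeping'' is in fact the entire mathematical content of the proof, and the setup you propose for it would not go through. In the graph-homology formalism there is no ``strut decorated by an arbitrary class $\beta$'': the strut $\ell$ maps to $2\sigma$ and nothing else, so your reduction to the generating function $\int_X\td^{1/2}\,e^\beta$ does not connect to the Wheeling Theorem. What the paper does instead is pair $\Omega$ against $(1+\ell)^n=(1+\ell)^{\sqcup n}$, so that Wheeling gives the exact identity $\inner{\Omega,(1+\ell)^n}=\inner{\Omega,1+\ell}^n$, and then --- this is the key computation --- show that for any class $\alpha$ of type $(2k,2k)$ admitting a generalized Fujiki constant one has
\[
\inner{\alpha,(2\sigma)^k}_\sigma=\frac{(2n)!}{\binom nk\,C(1)}\,\frac{C(\alpha)}{(2n-2k)!}\left[\frac{2\sigmabar}{q(\sigma+\sigmabar)}\right]^k,
\]
proved by cupping with $\exp(\sigma+\sigmabar)$, integrating, and invoking the Fujiki relation at the class $\sigma+\sigmabar$ together with $\int_X\inner{\alpha,\exp\sigma}_\sigma\exp(\sigma+\sigmabar)=\int_X\alpha\exp(\sigma+\sigmabar)$. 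Summing over $k$ with $\alpha=\td^{1/2}_{2k}$ identifies $\frac{C(1)}{(2n)!}\inner{\td^{1/2},(1+2\sigma)^n}_\sigma$ with the reversed polynomial $q^n\RR_{X,1/2}(1/q)$ evaluated at $\left[2\sigmabar/q(\sigma+\sigmabar)\right]$, and since $1,[\sigmabar],\dots,[\sigmabar]^n$ are linearly independent in $H^\ast(X,\cO_X)$ the $n$-th power on the graph side transfers to the polynomial. Your heuristic that the factorization reflects ``$n$ quaternionic directions'' plays no role; the $n$ is simply the exponent in $(1+\ell)^n$, forced by the $\binom nk$ in the displayed formula.

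A minor but real slip: the coefficient of $q^1$ in $\RR_{X,1/2}$ is $C(\td^{1/2}_{2n-2})/2$, not $C(\td^{1/2}_{2})/2=C(c_2)/48$ (these agree only for $n=2$), so your formula $r_X=24\,nC(\td^{1/2}_{2n})/C(c_2)$ is incorrect; comparing the two top-degree coefficients instead gives $r_X=(2n-1)C(c_2)/(24\,C(1))$, as recorded after the theorem.
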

In \cite[Sec.\ 3]{BeckmannExtendedIntegral}, by comparing the first two coefficients, it is shown that
\[
r_X=\frac{(2n-1)C(c_2)}{24C(1)}=\frac{(2n-1)2^nn!C(c_2)}{24(2n)!c_X},
\]
and
\[
C(\td^{1/2}_{2n})=\frac{C(1)(2r_X)^n}{(2n)!}=c_X\frac{r_X^n}{n!},
\]
where $c_X$ is the small Fujiki constant.
By comparing the third coefficients, we get the following relation.
\begin{corollary}
\label{cor:relation_td1/2}
Let $X$ be a hyperkähler manifold of dimension $2n>2$. Then
\[
7C(c_2^2)-4C(c_4) = \frac{5(2n-1)C(c_2)^2}{(2n-3)C(1)}.
\]
\end{corollary}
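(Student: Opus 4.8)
The plan is to read the relation off directly from the factorization in Theorem~\ref{thm:RR_td_1/2}, which carries all the conceptual content; what remains is to compare the three leading coefficients of $\RR_{X,1/2}(q)$ and eliminate the two unknown quantities $r_X$ and $C(\td^{1/2}_{2n})$. Write $Q_j$ for the coefficient of $q^{n-j}$. From the defining sum one has $Q_0 = C(1)/(2n)!$, $Q_1 = C(\td^{1/2}_2)/(2n-2)!$, and $Q_2 = C(\td^{1/2}_4)/(2n-4)!$. On the other hand, the factorized expression $\RR_{X,1/2}(q) = C(\td^{1/2}_{2n})\bigl(1 + \tfrac{1}{2r_X}q\bigr)^n$ gives $Q_j = C(\td^{1/2}_{2n})\binom{n}{j}(2r_X)^{-(n-j)}$ for $j = 0,1,2$.

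The key observation is that a polynomial which is a perfect $n$-th power satisfies a universal second-order relation among any three consecutive coefficients, independent of the overall scale and of the root. Concretely, forming $Q_1^2/(Q_0 Q_2)$ cancels both $C(\td^{1/2}_{2n})$ and every power of $r_X$, leaving
\[
\frac{Q_1^2}{Q_0 Q_2} = \frac{\binom{n}{1}^2}{\binom{n}{0}\binom{n}{2}} = \frac{2n}{n-1},
\]
or equivalently $(n-1)Q_1^2 = 2n\,Q_0 Q_2$. This is the single relation hidden in the factorization, and it is what produces the corollary.

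It then remains to substitute the explicit degree-two and degree-four parts of the square root of the Todd class recorded in the statement of Theorem~\ref{thm:RR_td_1/2}, namely $\td^{1/2}_2 = \tfrac{1}{24}c_2$ and $\td^{1/2}_4 = \tfrac{7}{5760}c_2^2 - \tfrac{1}{1440}c_4$, so that $Q_1 = C(c_2)/\bigl(24\,(2n-2)!\bigr)$ and $Q_2 = \bigl(7C(c_2^2) - 4C(c_4)\bigr)/\bigl(5760\,(2n-4)!\bigr)$; the combination $7C(c_2^2) - 4C(c_4)$ appears automatically once one notes $\tfrac{1}{1440} = \tfrac{4}{5760}$. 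Plugging these into $(n-1)Q_1^2 = 2n\,Q_0 Q_2$ and solving for $7C(c_2^2) - 4C(c_4)$ yields the claimed identity after simplifying the factorials via $(2n)!/(2n-2)! = 2n(2n-1)$ and $(2n-4)!/(2n-2)! = 1/\bigl((2n-2)(2n-3)\bigr)$, with the numerical constant $5760/576 = 10$ and the cancellation $(n-1)/(2n-2) = 1/2$ collapsing everything to $\tfrac{5(2n-1)C(c_2)^2}{(2n-3)C(1)}$.

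The only step that requires care is this last factorial bookkeeping, and I expect no genuine obstacle: all the mathematical substance sits in Theorem~\ref{thm:RR_td_1/2}, whose factorization we are free to assume. One could equally well substitute the closed forms $r_X = (2n-1)C(c_2)/(24C(1))$ and $C(\td^{1/2}_{2n}) = C(1)(2r_X)^n/(2n)!$ quoted before the corollary and read off $Q_2$ directly, but the scale-invariant relation $(n-1)Q_1^2 = 2n\,Q_0 Q_2$ is cleaner, since it sidesteps $r_X$ and $C(\td^{1/2}_{2n})$ altogether.
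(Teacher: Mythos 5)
Your proposal is correct and follows essentially the same route as the paper: both extract the relation by comparing the first three coefficients of the factorization $\RR_{X,1/2}(q)=C(\td^{1/2}_{2n})\bigl(1+\tfrac{1}{2r_X}q\bigr)^n$ from Theorem~\ref{thm:RR_td_1/2}, the paper by first solving for $r_X$ and $C(\td^{1/2}_{2n})$ from the top two coefficients and then reading off the third, you by forming the scale-invariant ratio $(n-1)Q_1^2=2nQ_0Q_2$ that eliminates both unknowns at once. The factorial bookkeeping you outline checks out and reproduces the stated identity.
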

Combining Corollary~\ref{cor:relation_td1/2} and \eqref{eq:inequality_fujiki} we obtain
\begin{equation}
\label{eq:ineq_c2_2_and_c4}
    C(c_2^2) \geq \frac{b_2(X) + 2n -4}{5(b_2(X) + 2n-2)}(7C(c_2^2)-4C(c_4))
\end{equation}
which is equivalent to 
\begin{equation}
\label{eq:ineq_c2_2_and_c4_multiplied}
    (C(c_2^2)-2C(c_4))(b_2(X)+2n-9) \leq 10C(c_4).
\end{equation}
The last missing ingredient for proving Theorem~\ref{thm:bound_b2_c2_Verbitsky} is the connection of the above with the Riemann--Roch polynomial. 
\begin{corollary}
\label{cor:first_four_C}
All generalized Fujiki constants for characteristic classes of degree~$\le
4$ are determined by the Riemann--Roch polynomial, or more precisely, by its
first three coefficients
\[
\begin{aligned}
\RR_X(q)=\sum_{i=0}^n\frac{C(\td_{2n-2i})}{(2i)!}q^i
&=\frac{C(1)}{(2n)!}q^n+
\frac{C(\frac1{12}c_2)}{(2n-2)!}q^{n-1}+
\frac{C(\frac1{240}c_2^2-\frac1{720}c_4)}{(2n-4)!}q^{n-2}+\cdots\\
&=A_0q^n+A_1q^{n-1}+A_2q^{n-2}+\cdots
\end{aligned}
\]
Namely, we have
\[
\begin{gathered}
C(1) = (2n)!A_0,\quad C(c_2) = 12(2n-2)!A_1,\\
C(c_2^2) = 144(2n-4)!\left(4A_2-\frac{(n-1)A_1^2}{nA_0}\right),\\
C(c_4) = 144(2n-4)!\left(7A_2-\frac{3(n-1)A_1^2}{nA_0}\right).
\end{gathered}
\]
\end{corollary}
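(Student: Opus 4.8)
The plan is to exploit the linearity of the generalized Fujiki constant, which is legitimate by Theorem~\ref{thm:fujiki} for any finite linear combination of classes that remain of type $(2k,2k)$ on all small deformations, and thereby reduce the statement to computing $C$ on a spanning set of characteristic classes of complex degree at most $4$. On a hyperkähler manifold the holomorphic symplectic form gives $T_X\cong\Omega^1_X$, so $c_{\mathrm{odd}}(X)=0$; in particular $c_1=c_3=0$, the only characteristic class of positive degree $\le 2$ is $c_2$, and in degree $4$ the monomials $c_1^4,\,c_1^2c_2,\,c_1c_3$ vanish, leaving $c_2^2$ and $c_4$. So it suffices to pin down the four numbers $C(1),C(c_2),C(c_2^2),C(c_4)$. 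Matching the coefficient of $q^{n-j}$ in $\RR_X(q)$ gives $A_j=C(\td_{2j})/(2n-2j)!$, and the Todd expansion for $c_1=0$ starts $\td_0=1$, $\td_2=\tfrac1{12}c_2$; linearity of $C$ immediately yields $C(1)=(2n)!A_0$ and $C(c_2)=12(2n-2)!A_1$.

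For the two degree-$4$ constants I would set up a linear system in the unknowns $C(c_2^2)$ and $C(c_4)$. The first equation comes from the coefficient $A_2$ together with the standard degree-$4$ Todd term $\td_4=\tfrac1{240}c_2^2-\tfrac1{720}c_4$: applying $C$ termwise and clearing denominators gives $3\,C(c_2^2)-C(c_4)=720\,(2n-4)!\,A_2$. The second equation is Corollary~\ref{cor:relation_td1/2}, namely $7\,C(c_2^2)-4\,C(c_4)=5(2n-1)C(c_2)^2/\bigl((2n-3)C(1)\bigr)$, into which I substitute the already-computed $C(c_2)=12(2n-2)!A_1$ and $C(1)=(2n)!A_0$; using $(2n)!=2n(2n-1)(2n-2)!$ and $(2n-2)!/(2n-3)=(2n-2)(2n-4)!$, the right-hand side collapses to $720(n-1)(2n-4)!A_1^2/(nA_0)$. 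Eliminating $C(c_4)$ between the two equations multiplies $C(c_2^2)$ by $-5$, and dividing through by $5$ produces the common prefactor $144(2n-4)!$ in both closed forms stated in the corollary.

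I expect the only real care to lie in the factorial bookkeeping of the second equation, where the mixed quantity $C(c_2)^2/C(1)$ must be reduced to a clean rational multiple of $(2n-4)!\,A_1^2/A_0$ so that the two equations share the factor $(2n-4)!$ and the system solves cleanly. There is no geometric obstacle: all the content has been front-loaded into Corollary~\ref{cor:relation_td1/2} (itself obtained by comparing third coefficients in Theorem~\ref{thm:RR_td_1/2}), and the present statement is a bookkeeping consequence of solving two simultaneous linear equations, with linearity of $C$ guaranteed by Theorem~\ref{thm:fujiki} since every characteristic class keeps its Hodge type under deformation.
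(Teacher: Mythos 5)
Your proposal is correct and follows essentially the same route as the paper: read off $C(1)$ and $C(c_2)$ from the first two coefficients, then solve the $2\times 2$ linear system for $C(c_2^2)$ and $C(c_4)$ coming from the third coefficient via $\td_4=\tfrac1{240}c_2^2-\tfrac1{720}c_4$ and from Corollary~\ref{cor:relation_td1/2}; your factorial reduction of $C(c_2)^2/C(1)$ to $720(2n-4)!\,(n-1)A_1^2/(nA_0)$ matches the paper's. The only addition is your (correct) preliminary remark that odd Chern classes vanish, which the paper leaves implicit.
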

\begin{proof}
Clearly $C(1)$ and $C(c_2)$ appear as coefficients of the Riemann--Roch
polynomial.
For $C(c_2^2)$ and $C(c_4)$, we already have one linear relation
\[
7C(c_2^2)-4C(c_4) = \frac{5(2n-1)C(c_2)^2}{(2n-3)C(1)}=
720(2n-4)!\frac{(n-1)A_1^2}{nA_0}.
\]
The third coefficient gives another one
\[
3C(c_2^2)-C(c_4) = 720(2n-4)!A_2,
\]
which allows us to uniquely determine their values.
Hence we get all four generalized Fujiki constants of degree $\le 4$.
\end{proof}
\begin{proof}[Proof of Theorem~\ref{thm:bound_b2_c2_Verbitsky}]
We replace all generalized Fujiki constants in~\eqref{eq:ineq_c2_2_and_c4} by the coefficients of the Riemann--Roch
polynomial. After some simplifications we get
\begin{equation}
\label{eq:inequality_A}
\begin{aligned}
4A_2&\ge\frac{(n-1)A_1^2}{nA_0}\left(1+\frac{b+2n-4}{b+2n-2}\right)\\
&=\frac{(n-1)A_1^2}{nA_0}\left(2-\frac{2}{b+2n-2}\right),
\end{aligned}
\end{equation}
or equivalently,
\[
\frac1{b_2(X)+2n-2}\ge 1-\frac{2nA_0A_2}{(n-1)A_1^2}.
\]
This yields the desired inequality provided
\[
1-\frac{2nA_0A_2}{(n-1)A_1^2} >0
\]
which is exactly condition \eqref{eq:hypothesis}. 

On the other hand, from Proposition~\ref{prop:fundamental_inequality} we know that $c_2 \in \SH^4(X,\bR)$ if and only if the inequality \eqref{eq:bound} is in fact an equality.
Hence in this case, we have
\[
1-\frac{2nA_0A_2}{(n-1)A_1^2} = \frac{1}{b_2(X)+2n-2} > 0
\]
so the condition \eqref{eq:hypothesis} is indeed satisfied.
\end{proof}

\begin{remark}
\label{rmk:C_ch4}
As mentioned in the introduction we can use \eqref{eq:ineq_c2_2_and_c4_multiplied} to state Theorem~\ref{thm:bound_b2_c2_Verbitsky} in terms of the generalized Fujiki constants $C(c_2^2)$ and $C(c_4)$. Condition~\eqref{eq:hypothesis} then becomes
\[
C(c_2^2)>2C(c_4)
\]
and writing $C(c_2^2) = \mu C(c_4)$ for some $\mu > 2$, the bound \eqref{eq:bound} becomes
\[
b_2(X)\le 9-2n+\frac{10}{\mu-2}.
\]
So we can still get a bound on $b_2(X)$ without knowing the values for $C(1)$ and
$C(c_2)$.
\end{remark}

\begin{remark}
\label{rmk:RR_split}
Suppose that the Riemann--Roch polynomial factorizes as a product of linear
factors
\[
\RR_X(q)=A_0\prod_i(q+\lambda_i).
\]
It was shown in~\cite{JiangRR} that all the coefficients of $\RR_X(q)$ are positive.
Hence the $\lambda_i$ must all be positive.
If, moreover, we assume that the $\lambda_i$
are not all equal, then condition~\eqref{eq:hypothesis} is satisfied
by Cauchy--Schwarz, and the inequality~\eqref{eq:bound} can be written as
\[
b_2(X)\le
\frac{n-1}{\dfrac{n\sum\lambda_i^2}{(\sum\lambda_i)^2}-1}-(2n-2).
\]
This is homogeneous with respect to the $\lambda_i$ and measures in a
certain sense the dispersion of the roots.
\end{remark}

\bigskip

The condition for $c_2$ to be contained inside the Verbitsky component actually also gives an equivalent condition for $\td^{1/2}_{2n-2}$ to lie inside $\SH(X,\bR)$, by the following result.
\begin{proposition}
For a hyperkähler manifold $X$ of dimension $2n$, we have $\td^{1/2}_{2k}\in \SH(X,\bR)$ if and only if $\td^{1/2}_{2n-2k} \in \SH(X,\bR)$. Moreover, $\td^{1/2}_{2k}\in \SH(X,\bR)$ implies $\td^{1/2}_{2k'}\in \SH(X,\bR)$ for $k'<k<n$.
\end{proposition}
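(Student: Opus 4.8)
The plan is to translate membership in $\SH(X,\bR)$ into the vanishing of an orthogonal projection and then exploit the representation theory of the LLV algebra. Since $\SH(X,\bR)$ is a submodule for $\mathfrak g(X)_\bR$ and the Poincaré pairing is invariant, the decomposition $H^\ast(X,\bR)=\SH(X,\bR)\oplus\SH(X,\bR)^\perp$ is orthogonal for Poincaré duality and the pairing restricts to a perfect pairing on each summand. Hence a class $\alpha\in H^{4k}(X,\bR)$ lies in $\SH(X,\bR)$ if and only if $\int_X\alpha\cdot v=0$ for every $v\in\SH(X,\bR)^\perp\cap H^{4n-4k}(X,\bR)$. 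Writing $\td^{1/2}_{2k}=\lambda_k\q^k+w_k$ with $w_k\in\SH(X,\bR)^\perp$, the component along $\SH(X,\bR)$ is forced to be a multiple of $\q^k$, because $\td^{1/2}_{2k}$ is monodromy invariant and the monodromy- (equivalently $O(q_X)$-) invariants of $\SH(X,\bR)$ in degree $4k$ are spanned by $\q^k$. Thus the whole proposition is a statement about the vanishing pattern of the classes $w_k$, and $\int_X\td^{1/2}_{2k}\cdot v=\int_X\td^{1/2}\cdot v$ for $v\in\SH(X,\bR)^\perp\cap H^{4n-4k}(X,\bR)$.

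Next I would organize the $w_k$ into a single $\mathfrak{sl}_2$-module. Cup product with $\q$ and its Poincaré adjoint generate the copy of $\mathfrak{sl}_2$ attached to the hyperbolic plane adjoined to $H^2(X,\bR)$ in the LLV construction; it commutes with the monodromy action, preserves $\SH(X,\bR)$ and its orthogonal complement, and acts on the monodromy-invariant part $I^\bullet:=\bigoplus_k\big(H^{4k}(X,\bR)\big)^{\mathrm{mon}}$ with raising operator $\cup\q$. On $I^\bullet$ the invariants of $\SH(X,\bR)$ form the Lefschetz string through $1$, i.e.\ the lines spanned by the $\q^k$, whereas $w_k$ lives in the complementary submodule $J^{4k}:=\SH(X,\bR)^\perp\cap\big(H^{4k}(X,\bR)\big)^{\mathrm{mon}}$. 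Hard Lefschetz for this $\mathfrak{sl}_2$ yields injectivity of $\q^{\,k-k'}\colon J^{4k'}\to J^{4k}$ below the middle and an isomorphism $\q^{\,n-2k}\colon J^{4k}\to J^{4n-4k}$; note also that $w_0=w_n=0$ automatically.

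With this setup the duality reduces to showing that $\q^{\,n-2k}w_k$ and $w_{n-k}$ vanish simultaneously, and the monotonicity to propagating non-vanishing of $w_{k'}$ upward to $w_k$ for $k'<k<n$; both would follow from a single Lefschetz self-duality of the square root of the Todd class modulo the Verbitsky component, namely a relation $\q^{\,n-2k}\td^{1/2}_{2k}\equiv c_k\,\td^{1/2}_{2n-2k}\pmod{\SH(X,\bR)}$ with $c_k\neq0$. This is the step I expect to be the main obstacle: the LLV and Hard Lefschetz formalism is insensitive to which specific vectors the $w_k$ are, so one must feed in genuine information about $\td^{1/2}=\hat A$. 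The natural inputs are the symmetry $T_X\cong\Omega^1_X$ with Serre duality, which forces the Chern roots to occur in pairs $\pm y_i$ and makes $\sqrt{\td}$ self-dual, and the Rozansky--Witten description of $\sqrt{\td}$ developed in the later sections, where such a self-duality becomes transparent. Once the relation above is available, the duality is immediate from the Lefschetz isomorphism and the monotonicity from Lefschetz injectivity; as a consistency check, combining the two shows that the $w_k$ all vanish together for $1\le k\le n-1$, in agreement with the $k=1$ case treated by Proposition~\ref{prop:fundamental_inequality}.
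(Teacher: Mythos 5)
Your reduction of the statement to the vanishing pattern of the orthogonal components $w_k$ of $\td^{1/2}_{2k}$ is fine, and the observation that the $\SH$-part must be a multiple of $\q^k$ by monodromy invariance is correct. But the argument has a genuine gap, and you have located it yourself: everything hinges on the relation $\q^{\,n-2k}\,\td^{1/2}_{2k}\equiv c_k\,\td^{1/2}_{2n-2k}\pmod{\SH(X,\bR)}$ with $c_k\neq 0$, which you do not prove. Without it nothing connects $w_k$ to $w_{n-k}$ or to $w_{k'}$, so the proof is incomplete at its crucial step. The paper supplies exactly this missing input, but in a different $\mathfrak{sl}_2$: it uses the triple $(e_\sigma,h_p,\Lambda_\sigma)$ attached to a symplectic form $\sigma$ together with Jiang's identity $\Lambda_\sigma(\td^{1/2}_{2k})=r_\sigma\,\td^{1/2}_{2k-2}\wedge\sigmabar$ with $r_\sigma>0$, iterated to give $\Lambda_\sigma^{n-2k}(\td^{1/2}_{2n-2k})=r_\sigma^{n-2k}\,\td^{1/2}_{2k}\wedge\sigmabar^{n-2k}$; both conclusions then follow from Fujiki's theorem that $e_{\sigmabar}^{\,s}$ and $\Lambda_\sigma^{\,s}$ induce isomorphisms between the relevant bigraded pieces compatibly with the decomposition into irreducible LLV-representations.

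A second, unacknowledged gap is the $\mathfrak{sl}_2$ you propose to use. Cup product with $\q$ is a degree-$4$ operator; it does not lie in the LLV algebra (which has no degree-$4$ graded piece), and it is not standard that $\cup\,\q$ together with its Poincaré adjoint closes up into an $\mathfrak{sl}_2$, nor that $\q^{\,n-2k}\colon J^{4k}\to J^{4n-4k}$ is an isomorphism and $\q^{\,k-k'}\colon J^{4k'}\to J^{4k}$ is injective on the complement of the Verbitsky component. Hard Lefschetz is known for cup product with Kähler classes and, via Fujiki, for $e_{\sigmabar}$ and $\Lambda_\sigma$ on the Hodge-bigraded pieces; the class $\q$ has indefinite signature and is not covered by these results. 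So even granting your self-duality relation, the Lefschetz machinery you want to apply to it would itself require proof. If you replace your $\mathfrak{sl}_2$ by $(e_\sigma,h_p,\Lambda_\sigma)$ and your conjectural self-duality by Jiang's identity, your outline becomes essentially the paper's proof.
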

\begin{proof}
For a class $\alpha\in H^2(X,\bC)$, denote by $e_\alpha\in \mathfrak g(X)_\bC$ the operator $x\mapsto x\cdot\alpha$.
Define $h_p$ to be the holomorphic grading operator that acts on $H^{p,q}(X)$ as $(n-p)\Id$ (which is denoted by $\Pi$ in~\cite{JiangRR}), and similarly the antiholomorphic grading operator $h_q$ which acts on $H^{p,q}(X)$ as $(n-q)\Id$. 
Recall that for the class $\sigma$ of a symplectic form, the operator $e_\sigma$ has the Lefschetz property with respect to the grading given by $h_p$: there exists a dual Lefschetz operator $\Lambda_\sigma \in \mathfrak{g}(X)_{\bC}$, such that together with the operator~$h_p$, we get an $\mathfrak{sl}_2$-triple $(e_\sigma, h_p, \Lambda_\sigma)$ in the LLV algebra. The same result holds if we consider $e_\sigmabar$ and $h_q$.

Jiang \cite[Cor.\ 3.19]{JiangRR} showed that there exists a constant $r_\sigma\in \bR_{>0}$ such that
\begin{equation}
\label{eq:Jiang_td1/2}
    \Lambda_\sigma(\td_{2k}^{1/2}) =r_\sigma \td^{1/2}_{2k-2}\wedge \sigmabar.
\end{equation}
Furthermore, the operators $e_{\sigmabar}$ and $\Lambda_\sigma$ commute for degree reasons. Applying~\eqref{eq:Jiang_td1/2} repeatedly, we see that the following holds for all $k<n/2$
\begin{equation}
\label{eq:proof_prop210}
    \Lambda_\sigma^{n-2k}(\td^{1/2}_{2n-2k})= r_\sigma^{n-2k}\td_{2k}^{1/2}\wedge \sigmabar^{n-2k}.
\end{equation}

On the other hand, Fujiki \cite{FujikiCohomology} showed that the operators $e_{\sigmabar}$ and $\Lambda_\sigma$ yield isomorphisms
\[
e_{\sigmabar}^{s} \colon H^{l,n-s}(X) \simto H^{l,n+s}(X) , \quad \Lambda_\sigma^s \colon H^{n+s,l}(X) \simto H^{n-s,l}(X).
\]
Moreover, these isomorphisms are compatible with the decomposition of $H^\ast(X,\bC)$ into irreducible $\mathfrak{g}(X)_{\bC}$-representations, i.e.\ for each irreducible representation $V \subset H^\ast(X,\bC)$, the isomorphism $e_{\sigmabar}^s$ restricts to an isomorphism
\[
e_{\sigmabar}^{s} \colon H^{l,n-s}(X) \cap V \simto H^{l,n+s}(X) \cap V,
\]
and similar for $\Lambda_\sigma^s$. 
Combining this with \eqref{eq:proof_prop210} yields the first assertion. 
The second statement also follows from \eqref{eq:proof_prop210} using the same line of arguments. 
\end{proof}
\begin{corollary}
\label{cor:td_2n-2_verbitskycomponent}
For a hyperkähler manifold $X$ of dimension $2n$, the class $\td^{1/2}_{2n-2}$ lies in the Verbitsky component if and only if the condition~\eqref{eq:hypothesis} is satisfied and the equality in~\eqref{eq:bound} holds.  
\end{corollary}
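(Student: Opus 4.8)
The plan is to reduce the assertion about $\td^{1/2}_{2n-2}$ to the already-settled question of whether $c_2$ lies in the Verbitsky component, and then to extract from the proof of Theorem~\ref{thm:bound_b2_c2_Verbitsky} the precise characterization of the latter. First I would invoke the preceding proposition, which supplies the equivalence $\td^{1/2}_{2k}\in\SH(X,\bR)\iff\td^{1/2}_{2n-2k}\in\SH(X,\bR)$. Taking $k=1$, this says exactly that $\td^{1/2}_{2n-2}\in\SH(X,\bR)$ if and only if $\td^{1/2}_{2}\in\SH(X,\bR)$, so the high-degree question becomes a degree-four question.

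Next I would identify the class $\td^{1/2}_{2}$. Since $c_1(X)=0$ forces $\td=1+\tfrac1{12}c_2+\cdots$, taking the square root gives $\td^{1/2}=1+\tfrac1{24}c_2+\cdots$, hence $\td^{1/2}_{2}=\tfrac1{24}c_2$; this is also visible as the $q^{n-1}$-coefficient in the expansion of $\RR_{X,1/2}$ in Theorem~\ref{thm:RR_td_1/2}. Because $\SH(X,\bR)$ is a linear subspace and $\tfrac1{24}\neq 0$, membership of $\td^{1/2}_{2}$ is equivalent to membership of $c_2$. Combining the two reductions, $\td^{1/2}_{2n-2}\in\SH(X,\bR)$ if and only if $c_2\in\SH(X,\bR)$.

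Finally I would read off from the proof of Theorem~\ref{thm:bound_b2_c2_Verbitsky} the clean equivalence $c_2\in\SH(X,\bR)\iff$ [condition~\eqref{eq:hypothesis} holds and \eqref{eq:bound} is an equality]. Indeed $c_2\in\SH^4(X,\bR)$ means $c_2\in\Sym^2H^2(X,\bR)$, which by Proposition~\ref{prop:fundamental_inequality} is equivalent to equality throughout the chain running from \eqref{eq:inequality_c2_square} to \eqref{eq:inequality_A} (the intermediate steps are substitutions of exact identities, so equality propagates), i.e.\ to $\tfrac1{b_2(X)+2n-2}=1-\tfrac{2nA_0A_2}{(n-1)A_1^2}$. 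Since the left-hand side is strictly positive, this equality forces \eqref{eq:hypothesis}, and under \eqref{eq:hypothesis} it is precisely the equality case of \eqref{eq:bound}. Chaining this with the previous paragraph yields the corollary.

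I do not expect a serious obstacle, as the argument is essentially bookkeeping of nested ``if and only if'' statements. The one point requiring care is that ``equality in~\eqref{eq:bound}'' is only meaningful once \eqref{eq:hypothesis} is assumed, since otherwise the right-hand side of \eqref{eq:bound} is non-positive; I must therefore check that the equivalences propagate in the correct direction and that it is exactly the positivity of $\tfrac1{b_2(X)+2n-2}$ which promotes the numerical equality to condition~\eqref{eq:hypothesis}.
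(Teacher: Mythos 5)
Your proposal is correct and follows exactly the route the paper intends: the corollary is stated as an immediate consequence of the preceding proposition (applied with $k=1$, reducing $\td^{1/2}_{2n-2}$ to $\td^{1/2}_{2}=\tfrac1{24}c_2$) combined with the equivalence $c_2\in\Sym^2H^2(X,\bR)\iff$ [\eqref{eq:hypothesis} and equality in \eqref{eq:bound}] established in the proof of Theorem~\ref{thm:bound_b2_c2_Verbitsky}. Your care about the positivity of $\tfrac1{b_2(X)+2n-2}$ forcing \eqref{eq:hypothesis} is exactly the right bookkeeping.
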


\bigskip

We now examine the bound \eqref{eq:bound} for the known deformation types of smooth hyperkähler manifolds. There are only two
types of Riemann--Roch polynomials
\[
\RR_{\mathrm K3^{[n]}}(q) = \binom{q/2+n+1}n,\quad
\RR_{\Kum_n}(q) = (n+1)\binom{q/2+n}n,
\]
see \cite[Lem.\ 5.1]{EGL} and \cite[Lem.\ 5.2]{NieperWissHRRonIHS}. Ríos Ortiz showed that O'Grady's sporadic examples satisfy $\RR_{\OG_{10}}(q)=\RR_{\mathrm K3^{[5]}}(q)$ and $\RR_{\OG_{6}}(q)=\RR_{\Kum_3}(q)$ in~\cite{OrtizRRPolynomials}. 

\begin{example}[$\mathrm K3^{[n]}$-type]
We compute the first three coefficients
\[
\begin{aligned}
\RR_{\mathrm K3^{[n]}}(q) &= \binom{q/2+n+1}n\\
&=\frac1{2^nn!}q^n +\frac{n+3}{2^n(n-1)!}q^{n-1}
+\frac{3n^2+17n+26}{3\cdot 2^{n+1}(n-2)!}q^{n-2}
+\cdots
\end{aligned}
\]
Then by inserting the values $A_0,A_1,A_2$ into~\eqref{eq:bound},
we get the following upper bound
\[
b_2(X)\le n+17+\frac{12}{n+1}.
\]
Alternatively, we could also have used Remark~\eqref{rmk:RR_split} to obtain the
expression.
When $n=2$ or $n=3$, it evaluates to 23 and is attained by $\mathrm K3^{[n]}$;
when $n=5$, it evaluates to 24 and is attained by $\OG_{10}$. In particular,
these are exactly the three known deformation types with this Riemann--Roch polynomial for which we have $c_2\in \Sym^2 H^2(X,\bR)$.
\end{example}

\begin{example}[$\Kum_n$-type]
We compute similarly the first three coefficients
\[
\begin{aligned}
\RR_{\Kum_n}(q) &= (n+1)\binom{q/2+n}n\\
&=\frac{n+1}{2^nn!}q^n
+\frac{(n+1)^2}{2^n(n-1)!}q^{n-1}
+\frac{(n+1)^2(3n+2)}{3\cdot 2^{n+1}(n-2)!}q^{n-2}
+\cdots
\end{aligned}
\]
and insert these three coefficients into~\eqref{eq:bound}. In this case, the
upper bound we get is
\[
b_2(X)\le n+5.
\]
When $n=2$, it is attained by $\Kum_2$; when $n=3$ it is attained by $\OG_6$.
Again, for these two types, we have $c_2\in \Sym^2 H^2(X,\bR)$.
\end{example}

Another consequence of the inequality is the positivity of the generalized
Fujiki constants $C(c_2^2)$ and $C(c_4)$.
\begin{proposition}
Let $X$ be a hyperkähler manifold of dimension $2n$. The generalized Fujiki
constant $C(c_2^2)$ is always positive, and $C(c_4)$ is positive except
possibly when $n=2$ and $b_2(X)=3,4,5$ or
when $n=3$ and $b_2(X)=3$.
\end{proposition}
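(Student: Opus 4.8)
The plan is to extract both positivity statements from the relations already assembled in this section: the fundamental inequality of Proposition~\ref{prop:fundamental_inequality}, the values of $C(\q)$ and $C(\q^2)$ from Proposition~\ref{prop:fujiki_of_q}, and the combined inequality~\eqref{eq:ineq_c2_2_and_c4_multiplied}. Throughout I use the standing assumptions $n\ge 2$ and $b\coloneqq b_2(X)\ge 3$ (the latter since $q_X$ has signature $(3,b-3)$), together with two elementary inputs: $C(1)>0$, obtained by evaluating $C(1)\,q_X(\omega)^n=\int_X\omega^{2n}>0$ on a Kähler class $\omega$, and the strict positivity $C(c_2)>0$, which is the classical fact that $\int_X c_2(X)\cdot\omega^{2n-2}>0$ for a hyperkähler manifold; in particular $c_2\ne 0$.

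First I would dispose of $C(c_2^2)$. Proposition~\ref{prop:fujiki_of_q} gives
\[
C(\q)=\frac{b+2n-2}{2n-1}\,C(1),\qquad
C(\q^2)=\frac{(b+2n-2)(b+2n-4)}{(2n-1)(2n-3)}\,C(1),
\]
and for $n\ge 2$, $b\ge 3$ every factor appearing here is strictly positive, so $C(\q)>0$ and $C(\q^2)>0$. Combining this with the fundamental inequality~\eqref{eq:inequality_c2_square} yields
\[
C(c_2^2)\ge\frac{C(c_2)^2}{C(\q)^2}\,C(\q^2)>0,
\]
the strictness coming from $C(c_2)\ne0$. Equivalently, the decomposition $C(c_2^2)=a^2C(\q^2)+C(z^2)$ from the proof of Proposition~\ref{prop:fundamental_inequality} shows that $C(c_2^2)=0$ would force $a=0$ and $z=0$, i.e.\ $c_2=0$. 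This settles the first claim unconditionally.

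For $C(c_4)$ I would start from~\eqref{eq:ineq_c2_2_and_c4_multiplied} and simply solve for $C(c_4)$. Collecting the $C(c_4)$-terms transforms
\[
(C(c_2^2)-2C(c_4))(b+2n-9)\le 10\,C(c_4)
\]
into $(b+2n-9)\,C(c_2^2)\le 2(b+2n-4)\,C(c_4)$, that is
\[
C(c_4)\ge\frac{b+2n-9}{2(b+2n-4)}\,C(c_2^2).
\]
Since $b+2n-4>0$ and, by the previous step, $C(c_2^2)>0$, the sign of the right-hand side is governed solely by $b+2n-9$: if $b+2n>9$ then $C(c_4)>0$; if $b+2n=9$ then $C(c_4)\ge0$; and if $b+2n<9$ the bound is negative and inconclusive. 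Enumerating the pairs $(n,b)$ with $n\ge 2$, $b\ge 3$ and $b+2n\le 9$ produces exactly $n=2$ with $b\in\{3,4,5\}$ and $n=3$ with $b=3$, while all remaining pairs satisfy $b+2n\ge 10$ and hence give $C(c_4)>0$. This reproduces the stated list of exceptions.

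There is no serious obstacle inside the argument---the manipulations are routine and the case analysis is finite. The only ingredient not already in the excerpt is the positivity $C(c_2)>0$ (equivalently $c_2\ne0$), which I would cite or derive from the Chern--Weil representative of $c_2$ associated to a Ricci-flat Kähler metric. The one point requiring care is the interpretation of the boundary and sub-boundary cases: for $b+2n=9$ the bound yields only $C(c_4)\ge0$ and for $b+2n<9$ it is vacuous, so these are genuinely \emph{possible} rather than actual failures of positivity, exactly as the proposition asserts.
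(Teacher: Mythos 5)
Your argument is correct and is essentially the paper's own proof: the positivity of $C(c_2^2)$ comes from Proposition~\ref{prop:fundamental_inequality} combined with Proposition~\ref{prop:fujiki_of_q} (which is exactly~\eqref{eq:inequality_fujiki}), and the case analysis for $C(c_4)$ rests on the same combined inequality, which the paper phrases via~\eqref{eq:inequality_A} in terms of the coefficients $A_0,A_1,A_2$ while you rearrange~\eqref{eq:ineq_c2_2_and_c4_multiplied} directly -- both reduce to the condition $b_2(X)+2n>9$. Your explicit flagging of the input $C(c_2)>0$ is a welcome bit of care that the paper leaves implicit.
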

\begin{proof}
From the inequality~\eqref{eq:inequality_fujiki}, it is clear that $C(c_2^2)$
is positive.
For $C(c_4)$ to be positive, it is equivalent to have
\[
\frac{nA_0A_2}{(n-1)A_1^2}>\frac37.
\]
By~\eqref{eq:inequality_A}, we have
\[
\frac{nA_0A_2}{(n-1)A_1^2}\ge\frac14\left(2-\frac2{b_2(X)+2n-2}\right).
\]
So we want the inequality
\[
\frac14\left(2-\frac2{b_2(X)+2n-2}\right)>\frac37
\]
which is equivalent to $b_2(X)+2n> 9$, and is satisfied except when $n=2$ and $b_2(X)\le
5$ or $n=3$ and $b_2(X)=3$.
\end{proof}
\begin{remark}
When $n=2$, these two generalized Fujiki constants are just Chern numbers, and
this is already known by the results of Guan \cite{Guan4dimHK}.
\end{remark}

\section{Orbifold examples}
\label{sec:orbifold_examples}
Theorem~\ref{thm:bound_b2_c2_Verbitsky} can also be generalized to the singular case, at least when
$n=2$. The proof is exactly the same as in Section~\ref{sec:The_Inequality}, so we only indicate the key ingredients. We follow the paper by Fu--Menet~\cite{FuMenet} and the notation therein.
\begin{itemize}
    \item We consider \emph{primitively symplectic orbifolds} \cite[Def.~3.1]{FuMenet}. In dimension 4, such orbifolds only contain isolated quotient singular points.
    \item Generalized Fujiki constants still exist, as proved by Menet in~\cite[Lem.~4.6]{MenetTorelli}. Hence we may still define the Riemman--Roch polynomial using the generalized Fujiki constants of the Todd class
    \[
    \RR_X(q)\coloneqq\sum_{i=0}^n\frac{C(\td_{2n-2i})}{(2i)!}q^i=A_0q^n+\cdots+A_n.
    \]
    \item Orbifold versions of the Gauss--Bonnet theorem and the Hirzebruch--Riemann--Roch theorem exist in dimension~4 (or more generally, for orbifolds with only isolated singularities), as proved by Blache in~\cite{Blache} (see \cite[Thm.~2.12 and Thm.~2.13]{FuMenet}): we have
    \[
    \chi_\top(X)=\int_{X} c_4+\sum_{x\in \Sing(X)}\left(1-\frac1{|G_x|}\right),
    \]
    and for all $L\in\Pic(X)$,
    \[
    \chi(X,L) = \int_{X}\ch(L)\cdot \td(X) + \sum_{x\in\Sing(X)}\frac1{|G_x|}\sum_{g\in G_x\setminus
\set{e}} \frac1{\det(\Id-\rho_{x,T_{X}}(g))}.
    \]
    Beware that the Riemann--Roch polynomial as defined above no longer gives the correct Euler characteristic, due to the contribution from singular points: instead we have
    \[
    \forall L\in\Pic(X)\quad \chi(X,L) = \RR_X(q_X(L))+(3-C(\td_4)).
    \]
    \item An orbifold version of the Hitchin--Sawon formula exists: this is~\cite[Prop.~4.2]{FuMenet}. In particular, when $n=2$, this gives the orbifold version of Corollary~\ref{cor:relation_td1/2}. One would expect that the more general result of Nieper-Wißkirchen should also hold for the singular case.
\end{itemize}
Using these ingredients and repeating the proof in Section~\ref{sec:The_Inequality}, we obtain Theorem~\ref{thm:bound_b2_c2_Verbitsky} for primitively symplectic orbifolds in dimension 4.
We apply it to examine the examples listed in \cite[Sec.~5]{FuMenet}. We will use $a_m$ to denote the number of isolated cyclic quotient singularities of order~$m$.
\begin{remark}
The conceptual reason why Theorem~\ref{thm:bound_b2_c2_Verbitsky} remains valid also in the singular case is that this type of result holds pointwise and, therefore, generalizes to orbifolds. 
\end{remark}

\begin{example}
Let $M'$ be the irreducible symplectic orbifold of dimension 4 with second Betti
number $b_2(M')=16$, also known as a \emph{Nikulin orbifold}~(see \cite[Sec.\ 5.11]{FuMenet} and \cite{CGKK}). It has 28 isolated quotient singularities of order 2, i.e., $a_2=28$. The orbifold $M'$ has topological Euler
characteristic $\chi_\top(M')=212$ and Fujiki constant $C(1)=6$.

Using the orbifold Riemann--Roch and Gauss--Bonnet theorems, we get
\[
\begin{aligned}
\int_{M'} \td_4=\int_{M'}\frac{3c_2^2-c_4}{720}
&=\chi(M',\cO_{M'})- \sum_{x\in\Sing(M')}\frac1{|G_x|}\sum_{g\in G_x\setminus
\set{e}} \frac1{\det(\Id-\rho_{x,T_{M'}}(g))}\\
&=3-28\cdot \frac12\cdot\frac1{16}\\
&=\frac{17}8,
\end{aligned}
\]
and
\[
\int_{M'} c_4=\chi_\top(M')-\sum_{x\in \Sing(M')}\left(1-\frac1{|G_x|}\right)=
198.
\]
Therefore we may compute
\[
C(c_2^2) = 576, \quad C(c_4)=198.
\]
The orbifold Hitchin--Sawon formula gives the relation in
Corollary~\ref{cor:relation_td1/2}, from which we deduce that $C(c_2)=36$.
Hence we have obtained the Riemann--Roch polynomial of~$M'$: 
\[
\RR_{M'}(q) = \frac14q^2+\frac32q+\frac{17}8.
\]
Note that this polynomial was also computed directly from the geometry of $M'$ by Camere--Garbagnati--Kaputska--Kaputska~\cite[Thm.~1.3]{CGKK}.

Now if we insert the values into~\eqref{eq:bound}, we get
\[
b_2(X)\le 16,
\]
for any primitively symplectic orbifold $X$ with the same Riemann--Roch
polynomial as $M'$. The Nikulin orbifold $M'$ attains the upper bound, and we have
$c_2(M')\in\Sym^2 H^2(M',\bR)$. Note that the two roots of $\RR_{M'}(q)$ are
$-3\pm\frac{\sqrt2}2$, so they are not integers.
\end{example}

\begin{example}
Let $K'$ be the orbifold example in~\cite[Sec.\ 5.6]{FuMenet} with second Betti number $b_2(K')=8$ and
$a_2=36$: we have $\chi_\top(K')=108$ and $C(1)=8$. Similarly, we
compute
\[
C(c_2)=40,\quad
C(c_2^2)=480,\quad
C(c_4)=90,
\]
and
\[
\RR_{K'}(q)=\frac13q^2+\frac53q+\frac{15}8.
\]
Using~\eqref{eq:bound}, we get the bound
\[b_2(X)\le 8,\]
which again holds for any primitively symplectic orbifold with the same Riemann--Roch polynomial.
So the example $K'$ also attains the upper bound. The two roots are
$\frac{-10\pm\sqrt{10}}4$.

Note that surprisingly, the Beauville--Bogomolov-Fujiki form of $K'$ is odd and represents the value 1. If we take a line bundle $H$ with $q(c_1(H))=1$, after adding the correction term, the Riemann--Roch formula tells us that $\chi(K', H)=5$, so one could expect that the linear system $|H|$ gives a (rational) finite cover of $\bP^4$.
\end{example}

\begin{example}
The following examples are obtained as cyclic quotients of smooth
hyperkähler manifolds of $\mathrm K3^{[2]}$-type~\cite[Sec.~5.2, 5.3, and 5.9]{FuMenet}, so they are primitively symplectic but not irreducible.
\begin{itemize}
\item Case $b_2(M^i_{11})=3$ for $i=1,2$ with $a_{11}=5$:
we have $\chi_\top(M^i_{11})=34$ and $C(1)=33$ for both $i=1,2$, so
\[
C(c_2)=30,\quad
C(c_2^2)=\frac{828}{11},\quad
C(c_4)=\frac{324}{11},
\]
and
\[
\RR_{M^i_{11}}(q)=\frac{11}8q^2+\frac54q+\frac3{11}
=\frac1{11}\RR_{\mathrm K3^{[2]}}(11q).
\]
\item Case $b_2(M_7)=5$ with $a_7=9$:
we have $\chi_\top(M_7)=54$ and $C(1)=21$, so
\[
C(c_2)=30,\quad
C(c_2^2)=\frac{828}7,\quad
C(c_4)=\frac{324}7,
\]
and
\[
\RR_{M_7}(q)=\frac78q^2+\frac54q+\frac37
=\frac17\RR_{\mathrm K3^{[2]}}(7q).
\]
\item Case $b_2(M_3)=11$ with $a_3=27$:
we have $\chi_\top(M_3)=126$ and $C(1)=9$, so
\[
C(c_2)=30,\quad
C(c_2^2)=276,\quad
C(c_4)=108,
\]
and
\[
\RR_{M_3}(q)=\frac38q^2+\frac54q+1
=\frac13\RR_{\mathrm K3^{[2]}}(3q).
\]
\end{itemize}
In all these cases, the bound we get is $b_2(X)\le 23$, which is not attained.
These are all equal to the bound for $\mathrm K3^{[2]}$, due to the fact that
the expression in~\eqref{eq:bound} is homogeneous in terms of the roots of
$\RR_X(q)$, hence will remain invariant after a change of variables.

In some sense, taking cyclic quotient does not produce genuinely ``new''
examples or Riemann--Roch polynomials.
\end{example}

\begin{example}
For the following examples, we could not find the values of the Fujiki constant
$C(1)$ in the literature. But a bound on $b_2$ can still be given, due to the observation in Remark~\ref{rmk:C_ch4}. We will simply write the upper bound obtained as $b_2(X)\le B$, where $X$ is understood as a primitively symplectic orbifold with the same Riemann--Roch polynomial.
\begin{itemize}
\item Case $b_2(K_4')=6$ with $a_2=30,a_4=8$ and $\chi_\top(K_4')=66$ (the numerical invariants obtained in \cite[Sec.~5.4]{FuMenet} are not correct; see Appendix~\ref{app:K4} for a detailed analysis): we have
\[
C(c_2)=10\sqrt{C(1)},\quad
C(c_2^2)=240,\quad
C(c_4)=45,
\]
and
\[
b_2(X)\le 8.
\]
Note also that $K_4'$ can be realized as a $\bZ/2$-quotient of the Kummer example $K'$, and indeed the two bounds that we obtained are the same.


\item Case $b_2(K_3')=7$ with $a_3=12$ and $\chi_\top(K_3')=108$~\cite[Sec.~5.5]{FuMenet}: we have
\[
C(c_2)=26\sqrt{C(1)/3},\quad
C(c_2^2)=540,\quad
C(c_4)=100,
\]
and
\[
b_2(X)\le \frac{135}{17}\approx 7.94
\]
So $b_2(K_3')=7$ is the maximal possible but does not attain the bound.

\item Case $b_2(Y_\KKK(D_3))=9$: the description of this example in~\cite{FuMenet} appears to be incorrect.%
\footnote{Namely, the orbifold is described as the quotient of an $S^{[2]}$ by some symplectic automorphisms forming the dihedral group $D_3$. But such a quotient would necessarily contain singularities in codimension 2.}

\item Case $b_2(Y_\KKK(\bZ/4\bZ))=10$ with $a_2=10, a_4=6$ and
$\chi_\top=140$~\cite[Table~1]{FujikiOrbifold}: we have
\[
C(c_2)=8\sqrt{3C(1)},\quad
C(c_2^2)=486,\quad
C(c_4)=\frac{261}2,
\]
and
\[
b_2(X)\le \frac{54}5=10.8.
\]
So $b_2(Y_\KKK(\bZ/4\bZ))=10$ is the maximal possible but does not attain the bound.

\item Case $b_2\big(Y_\KKK\big((\bZ/2\bZ)^2\big)\big)=14$ with $a_2=36$ and
$\chi_\top=180$~\cite[Table~1]{FujikiOrbifold}: we have
\[
C(c_2)=8\sqrt{3C(1)},\quad
C(c_2^2)=504,\quad
C(c_4)=162,
\]
and
\[
b_2(X)\le 14.
\]
So the bound is attained in this example.
\end{itemize}
\end{example}

\begin{example}[Kim]
This example was studied by Kim in~\cite[Sec.\ 7]{KimDualLagrangian}: let $X$ be a hyperkähler fourfold of $\Kum_2$-type admitting a Lagrangian fibration. We consider its dual Lagrangian fibration $\check X$. It is a singular hyperkähler orbifold with only isolated quotient singularities.

However, the analysis in {\it loc.\ cit.} of the singularities of $\check X$ contains an error: the group action admits 108 fixed points on $X$, and every other 3 of them are identified after the quotient. So one should have $a_3=36$, that is, $\check X$ admits 36 isolated cyclic quotient singularities of order~3, instead of just 18 of them as claimed in {\it loc.\ cit}. Since $\chi_\top(X)=108$, we may conclude that $\chi(\check X)=108/3=36$, which is consistent with the description of the cohomology.

We compute the numerical invariants. By the orbifold Gauss--Bonnet theorem, we have
$C(c_4)=\chi_{\mathrm{top}}-a_3\cdot \frac23=12$. Then by the orbifold Riemann--Roch theorem, we have
$\frac1{720}\big(3C(c_2^2)-C(c_4)\big)=3-a_3\cdot\frac13\cdot \frac29=\frac13$, hence
$C(c_2^2)=84$. This already gives us the bound on the second Betti number
\[
b_2(\check{X})\le \frac{10}{\frac{84}{12}-2}-2\cdot 2+9=7,
\]
which is attained by the dual Lagrangian fibration $\check X$.

Kim showed that the \emph{small} Fujiki constant $c_{\check X}$ of the dual Lagrangian fibration $\check X$ is $1/c_X$, so $C(1_{\check X})=\frac13\cdot 3=1$ in the dual $\Kum_2$ case.
Then by the orbifold Hitchin--Sawon formula, we may compute that $C(c_2)=6$. Hence the Riemann--Roch polynomial is given by
\[
\RR_{\check{\Kum}_2}(q) = \frac1{24}q^2+\frac1{4}q+\frac13=\frac19\RR_{\Kum_2}(q).
\]
In particular, for a line bundle $H$ with square $q(c_1(H))=6$, we can use the Riemann--Roch formula with the correction term to compute
$\chi(\check X, H)=6$. So one could expect that the linear system $|H|$ gives a hypersurface (or a cover thereof) in $\bP^5$.

\end{example}

\section{Generalized Fujiki constants for known smooth examples}
\label{sec:generl_fujiki_known_smooth}
In this section, we give an account for the generalized Fujiki constants
$C(c_\lambda)$ of characteristic classes $c_\lambda\coloneqq
c_2^{\lambda_2}c_4^{\lambda_4} \cdots c_{2n}^{\lambda_{2n}}$ for all known
deformation types of hyperkähler manifolds.
\subsection{$\mathrm{K3^{[n]}}$ and $\Kum_n$} The results are classical for the
two infinite families. In the $\mathrm K3^{[n]}$-case,
the method in Ellingsrud--Göttsche--Lehn~\cite{EGL} can be used to
compute all the generalized Fujiki constants using a computer for small $n$. 
A similar algorithmic method can be used to
treat the $\Kum_n$-case, with some slight modifications based on the work of
Nieper-Wißkirchen~\cite[Sec.~4.2.3]{NieperWißkirchenbook}. An implementation for these algorithms in {\tt Sage}
can be found on the second-named author's webpage. Closed formulae for the values $C(c_{2k})$ for both families were recently established in~\cite[Thm.\ 4.2]{CaoOberdieckToda}.
\subsection{$\OG_6$}
By Corollary~\ref{cor:first_four_C}, the generalized Fujiki constants for
characteristic classes of degree~$\le 4$ for $\OG_6$ are the same as those for
$\Kum_3$, since they share the same Riemann--Roch polynomial. Since the Chern
numbers of $\OG_6$ are also known~\cite[Prop.\ 6.8]{MRSOG6}, we can obtain all of them:
\[
\renewcommand{\arraystretch}{1.2}
\begin{array}{|c|c|c|cc|ccc|}
\hline
\alpha&1&c_2&c_4&c_2^2&c_6&c_4 c_2&c_2^3\\
\hline
C(\alpha)&60&288&480&1920&1920&7680&30720\\
\hline
\end{array}
\renewcommand{\arraystretch}{1}
\]
Alternatively, since for $\OG_6$-type the second Chern class $c_2$ lies in the
Verbitsky component (namely, $c_2(\OG_6)=2\q$), Corollary~\ref{cor:td_2n-2_verbitskycomponent} shows that the class $\td^{1/2}_4$ also lies in $\SH(X,\bR)$. Now $\td^{1/2}_4$ is a linear combination of $c_2^2$ and $c_4$, so the same may be said for the class $c_4$. Then we can use
Proposition~\ref{prop:fujiki_of_q} to determine that $c_4(\OG_6)=\q^2$, which then allows us to also compute $C(c_4 c_2)$ and $C(c_2^3)$. Finally we can use $C(\td_6)=4$ to solve the Euler characteristic $C(c_6)$.
\begin{proposition}
For hyperkähler manifolds of $\OG_6$-type, all Chern classes
$c_2,c_4,c_6$ lie in the Verbitsky component. We have
\[
c_2(\OG_6)=2\q,\quad
c_4(\OG_6)=\q^2,\quad
c_6(\OG_6)=\tfrac12\q^3.
\]
\end{proposition}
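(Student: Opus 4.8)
The plan is to bootstrap everything from the single fact that $c_2$ lies in the Verbitsky component, using the structural results already established and pinning down each coefficient by comparing generalized Fujiki constants of powers of $\q$. Throughout we use $n=3$ and $b_2(\OG_6)=8$, together with the fact that $\OG_6$ shares its Riemann--Roch polynomial with $\Kum_3$, so that $C(1)=60$ and $C(c_2)=288$ by Corollary~\ref{cor:first_four_C}.

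First I would record that $c_2\in\SH(X,\bR)$. Since $\OG_6$ has $b_2=8$ and therefore attains the bound of Theorem~\ref{thm:bound_b2_c2_Verbitsky} (which for the $\Kum_n$-type polynomial reads $b_2\le n+5=8$), the equality case forces $c_2\in\Sym^2H^2(X,\bR)$. A monodromy-invariant class in $\SH^4$ is a multiple of $\q$, so $c_2=a\q$; comparing $C(c_2)=a\,C(\q)$ and evaluating $C(\q)=\tfrac{b_2+2n-2}{2n-1}C(1)=144$ by Proposition~\ref{prop:fujiki_of_q} gives $a=288/144=2$, i.e. $c_2=2\q$.

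The key step is to promote this to $c_4$. Because $c_2\in\Sym^2H^2$, the hypotheses of Corollary~\ref{cor:td_2n-2_verbitskycomponent} hold, so $\td^{1/2}_4\in\SH(X,\bR)$. By Theorem~\ref{thm:RR_td_1/2} the class $\td^{1/2}_4$ is a fixed linear combination of $c_2^2$ and $c_4$; since $c_2^2=4\q^2\in\SH$, it follows that $c_4\in\SH$. As $c_4$ is monodromy-invariant it must be a multiple of $\q^2$, and comparing $C(c_4)=480$ with $C(\q^2)=\tfrac{b_2+2n-4}{2n-3}C(\q)=480$ (again Proposition~\ref{prop:fujiki_of_q}) yields $c_4=\q^2$. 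I expect this to be the only genuinely non-obvious point: there is no direct reason for $c_4$ to avoid the orthogonal complement of the Verbitsky component, and it is precisely the half-Todd factorization of Theorem~\ref{thm:RR_td_1/2} that rules this out.

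Finally, $c_6$ lies in $H^{4n}(X,\bR)$, which is one-dimensional and already contains the nonzero class $\q^3$, so $\SH^{4n}=H^{4n}$ and $c_6\in\SH$ automatically. To identify the coefficient I would compute $C(\q^3)=\tfrac{b_2}{1}C(\q^2)=3840$, whence $C(c_2^3)=8\,C(\q^3)=30720$ and $C(c_2c_4)=2\,C(\q^3)=7680$. Expanding the Todd class (using $c_1=c_3=c_5=0$, equivalently that the Chern roots occur in pairs $\pm y_j$) expresses $\td_6$ as an explicit combination of $c_2^3$, $c_2c_4$ and $c_6$ with nonzero coefficient on $c_6$; since $C(\td_6)=A_n=n+1=4$ is available for free, the single linear equation $C(\td_6)=4$ determines $C(c_6)=1920$. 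Then $c_6=\tfrac{C(c_6)}{C(\q^3)}\q^3=\tfrac12\q^3$, which completes the proof.
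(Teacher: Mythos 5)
Your argument is correct and follows essentially the same route as the paper: it uses the equality case of Theorem~\ref{thm:bound_b2_c2_Verbitsky} to get $c_2=2\q$, then Corollary~\ref{cor:td_2n-2_verbitskycomponent} together with the fact that $\td^{1/2}_4$ is a combination of $c_2^2$ and $c_4$ to place $c_4$ in the Verbitsky component, Proposition~\ref{prop:fujiki_of_q} to pin down the scalars, and finally $C(\td_6)=4$ to extract $C(c_6)=1920$. All the numerical values you quote ($C(\q)=144$, $C(\q^2)=480$, $C(\q^3)=3840$, $C(c_4)=480$) check out against the paper's table; the paper additionally mentions that one could instead import the Chern numbers of $\OG_6$ from the literature, but your self-contained route is precisely its preferred alternative.
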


\subsection{$\OG_{10}$}
The question for $\OG_{10}$ might seem difficult at first, as there are many
more unknown Fujiki constants to determine. It turns out to be quite easy, due
to the following observation.
\begin{proposition}
For hyperkähler manifolds of $\OG_{10}$-type, all Chern classes
$c_2,\dots,c_{10}$ lie in the Verbitsky component. We have
\[
\begin{gathered}
c_2(\OG_{10}) =\tfrac32\q,\quad
c_4(\OG_{10}) =\tfrac{15}{16}\q^2,\quad
c_6(\OG_{10}) =\tfrac{21}{64}\q^3,\\
c_8(\OG_{10}) =\tfrac{237}{3328}\q^4,\quad
c_{10}(\OG_{10}) =\tfrac{27}{2560}\q^5.
\end{gathered}
\]
\end{proposition}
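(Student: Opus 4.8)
The plan is to sidestep the large number of a priori unknown generalized Fujiki constants by first proving the structural statement that \emph{every} Chern class $c_{2k}$ lies in the Verbitsky component $\SH(X,\bR)$. Once this is known, each $c_{2k}$ is forced to be a multiple of $\q^k$ (the unique line of classes in $\SH^{4k}$ that stay of type $(2k,2k)$ on all small deformations), say $c_{2k}=\mu_k\q^k$. Then every monomial $c_2^{\lambda_2}\cdots c_{2n}^{\lambda_{2n}}$ becomes a monomial in the $\mu_k$ times a power of $\q$, and its generalized Fujiki constant is computed directly from Proposition~\ref{prop:fujiki_of_q} (with $2n=10$, $b_2=24$, and $C(1)=945$ read off from $\RR_{\OG_{10}}=\RR_{\mathrm K3^{[5]}}$). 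So the whole problem reduces to (i) the membership $c_{2k}\in\SH(X,\bR)$ and (ii) the determination of the five numbers $\mu_1,\dots,\mu_5$.

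For (i), I would start from $c_2\in\SH(X,\bR)$: the topological fact $b_2(\OG_{10})=24$ makes the bound of Theorem~\ref{thm:bound_b2_c2_Verbitsky} for the $\mathrm K3^{[5]}$-type Riemann--Roch polynomial (equal to $n+17+\tfrac{12}{n+1}=24$ at $n=5$) an equality, and the equality case of that theorem gives $c_2\in\Sym^2H^2(X,\bR)\subset\SH(X,\bR)$. Feeding this into Corollary~\ref{cor:td_2n-2_verbitskycomponent} yields $\td^{1/2}_{8}\in\SH(X,\bR)$. The Proposition preceding that corollary then propagates membership in two directions: the implication $\td^{1/2}_{2k}\in\SH\Rightarrow\td^{1/2}_{2k'}\in\SH$ for $k'<k<n$ gives $\td^{1/2}_{0},\td^{1/2}_{2},\td^{1/2}_{4},\td^{1/2}_{6}\in\SH$, while the symmetry $\td^{1/2}_{2k}\in\SH\Leftrightarrow\td^{1/2}_{2n-2k}\in\SH$ applied with the trivial class $\td^{1/2}_{0}=1$ yields the top piece $\td^{1/2}_{10}=\td^{1/2}_{2n}\in\SH$. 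Hence all of $\td^{1/2}_{4},\td^{1/2}_{6},\td^{1/2}_{8},\td^{1/2}_{10}$ lie in $\SH(X,\bR)$.

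The passage from the $\td^{1/2}_{2k}$ to the individual Chern classes is then a short induction on $k$ exploiting that $\SH(X,\bR)$ is a \emph{subalgebra}. Writing the degree-$2k$ component of the square root of the Todd class as
\[
\td^{1/2}_{2k}=\beta_k\,c_{2k}+(\text{polynomial in }c_2,c_4,\dots,c_{2k-2}),\qquad \beta_k\neq0,
\]
where $\beta_k$ is the (nonzero, Bernoulli-number) coefficient of the indecomposable class $c_{2k}$, I argue as follows: assuming $c_2,\dots,c_{2k-2}\in\SH$, every decomposable term above lies in $\SH$ because $\SH$ is closed under products, and since $\td^{1/2}_{2k}\in\SH$ with $\beta_k\neq0$ we conclude $c_{2k}\in\SH$. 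Starting from $c_2\in\SH$ this gives $c_2,c_4,c_6,c_8,c_{10}\in\SH(X,\bR)$, so each $c_{2k}=\mu_k\q^k$.

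For (ii), $\mu_1$ and $\mu_2$ are already pinned down by the first three coefficients of the Riemann--Roch polynomial through Corollary~\ref{cor:first_four_C}, which reproduce $C(c_2)=\mu_1C(\q)$ and $C(c_4)=\mu_2C(\q^2)$ and give $\mu_1=\tfrac32$, $\mu_2=\tfrac{15}{16}$; the remaining $\mu_3,\mu_4,\mu_5$ come from the higher coefficients $A_3,A_4,A_5$, i.e.\ from $C(\td_6),C(\td_8),C(\td_{10})$, since expanding each $\td_{2k}$ as a polynomial in $c_2,\dots,c_{2k}$ and substituting $c_{2j}=\mu_j\q^j$ turns $A_k(2n-2k)!=C(\td_{2k})$ into one linear equation for $\mu_k$ whose coefficients $C(\q^{j})$ are supplied by Proposition~\ref{prop:fujiki_of_q}, solvable recursively to $\mu_3=\tfrac{21}{64}$, $\mu_4=\tfrac{237}{3328}$, $\mu_5=\tfrac{27}{2560}$ (equivalently one may use the factorization of Theorem~\ref{thm:RR_td_1/2} to write each $\td^{1/2}_{2k}$ as an explicit multiple of $\q^k$ and invert the universal relation to the $c_{2k}$). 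The conceptual content lies entirely in (i); the main obstacle I anticipate is bookkeeping rather than depth, namely ensuring the propagation reaches \emph{all} degrees up to $c_{10}$ — this is exactly where the duality $\td^{1/2}_{2k}\leftrightarrow\td^{1/2}_{2n-2k}$ applied to $\td^{1/2}_0=1$ is indispensable — and confirming $\beta_k\neq0$ for every $k$ so the induction never stalls; both are clean, as $\beta_k$ is a nonzero rational multiple of $B_{2k}$, and as a final consistency check $\mu_5$ recovers the Euler number $\chi_{\mathrm{top}}(\OG_{10})=\mu_5\,C(\q^5)=176904$.
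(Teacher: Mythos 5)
Your proof is correct, and part (ii) — the determination of the scalars $\mu_k$ — coincides with the paper's own recursion: the paper pins down $C(c_2),C(c_4)$ via Corollary~\ref{cor:first_four_C} and then solves for each $C(c_{2k})$ from $C(\td^{1/2}_{2k})$, which is known by Theorem~\ref{thm:RR_td_1/2}; your variant via $C(\td_{2k})=(2n-2k)!\,A_k$ is equally valid since $c_{2k}$ appears in $\td_{2k}$ (and in $\td^{1/2}_{2k}$) with a nonzero Bernoulli coefficient. The genuinely different step is the membership claim $c_{2k}\in\SH(X,\bR)$. The paper deduces it from the LLV decomposition $H^\ast(\OG_{10},\bQ)=V_{(5)}\oplus V_{(2,2)}$ of \cite{GKLRLLV} together with branching to the generic Mumford--Tate algebra $\so(3,21)$: for generic $X$ the only Hodge class outside the Verbitsky component lives in $H^{10}$, which contains no Chern class. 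You instead bootstrap from $c_2\in\Sym^2H^2(X,\bR)$ (the equality case of Theorem~\ref{thm:bound_b2_c2_Verbitsky}, since $b_2(\OG_{10})=24$ attains the bound computed for the $\mathrm K3^{[5]}$ Riemann--Roch polynomial), propagate to all $\td^{1/2}_{2k}$ via Corollary~\ref{cor:td_2n-2_verbitskycomponent} and the proposition preceding it, and then peel off the $c_{2k}$ by induction using that $\SH(X,\bR)$ is a subalgebra and that $c_{2k}$ enters $\td^{1/2}_{2k}$ with coefficient $-B_{2k}/(2\,(2k)!)\neq0$; the top case $\td^{1/2}_{10}$ is anyway trivial as $H^{4n}$ is one-dimensional. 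This is essentially the ``alternative'' argument the paper itself sketches for $\OG_6$, carried through all degrees. What each approach buys: the representation-theoretic proof yields the stronger structural fact that there is exactly one extra Hodge class $\eta\in H^{10}(X,\bQ)$ orthogonal to $\SH(X,\bQ)$, whereas yours is more self-contained (no branching rules, only $b_2=24$ and the Riemann--Roch polynomial as inputs) and applies verbatim to any hyperkähler manifold attaining the bound of Theorem~\ref{thm:bound_b2_c2_Verbitsky}. Your final consistency check $C(c_{10})=\mu_5\,C(\q^5)=176904$ is also correct.
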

\begin{proof}
We use the LLV decomposition of the cohomology obtained in
\cite[Thm~3.26]{GKLRLLV}
\[
H^*(\OG_{10},\bQ)=V_{(5)}\oplus V_{(2,2)} \quad\text{as $\so(4,22)$-modules}.
\]
We are interested in the second component, which only contributes to cohomological degree $k$ for $k\in\set{6,8,10,12,14}$.

For a generic $X$ in the moduli space, the (special) Mumford--Tate algebra is
the maximal possible and is isomorphic to $\so(3,21)$. Using the branching
rules, we get the following decompositions of $\so(3,21)$-modules/Hodge
structures ($H^{12}$ and $H^{14}$ are omitted by symmetry)
\[
\begin{aligned}
H^{6}(X,\bQ) &= \SH^{6}(X,\bQ)\oplus V_{(2)},\\
H^{8}(X,\bQ) &= \SH^{8}(X,\bQ)\oplus V_{(2,1)}\oplus V_{(1)},\\
H^{10}(X,\bQ)&= \SH^{10}(X,\bQ)\oplus V_{(2,2)}\oplus V_{(2)}\oplus
V_{(1,1)}\oplus \bQ.
\end{aligned}
\]
In other words, up to multiplying by a non-zero scalar, there is only one Hodge class $\eta\in H^{10}(X,\bQ)$ that lies in $\SH(X,\bQ)^\perp$ for a generic $X$. In particular, this
means that all the Chern classes $c_2,\dots,c_{10}$ lie in the Verbitsky
component.

For a generic $X$, the only Hodge classes in the Verbitsky components are
multiples of powers of~$\q$, so each Chern class $c_{2k}$ is a
multiple of $\q^k$. We explain how to determine the scalars, starting from
smaller~$k$: we use Corollary~\ref{cor:first_four_C} to determine $C(c_2)$ and
$C(c_4)$. Since the values of $C(\q^k)$ are known by
Proposition~\ref{prop:fujiki_of_q}, we have determined $c_2$ and $c_4$. Once
all $c_{2i}$ for $i<k$ are known, we study the class
$\td^{1/2}_{2k}$, whose generalized Fujiki constant $C(\td^{1/2}_{2k})$ is
known by Theorem~\ref{thm:RR_td_1/2} and whose only unknown term
is a given multiple of $c_{2k}$. Therefore we will be able to uniquely
determine $C(c_{2k})$ and thus $c_{2k}$ itself.
\end{proof}

It is then straightforward to compute the generalized Fujiki constants, which
we include for the reader's convenience.
\[
\resizebox{\hsize}{!}{
$
\renewcommand{\arraystretch}{1.2}
\begin{gathered}
\begin{array}{|c|c|c|cc|ccc|ccccc|}
\hline
\alpha&1&c_2&c_4&c_2^2&c_6&c_4 c_2&c_2^3&c_8&c_6c_2&c_4^2&c_4c_2^2&c_2^4\\
\hline
C(\alpha)&945&5040&13500&32400&26460&113400&272160&49770&343980&614250&
1474200&3538080\\
\hline
\end{array}\\
\begin{array}{|ccccccc|}
\hline
c_{10}&c_8c_2&c_6c_4&c_6c_2^2&c_4^2c_2&c_4c_2^3&c_2^5\\
\hline
176904&1791720&5159700&12383280&22113000&53071200&127370880\\
\hline
\end{array}
\end{gathered}
\renewcommand{\arraystretch}{1}
$
}
\]
Note that the Chern numbers for $\OG_{10}$ have already been computed by
Cao--Jiang in the appendix of~\cite{OrtizRRPolynomials}.

It is remarkable that the knowledge of the Riemann--Roch polynomial together with the
assumption that all Chern classes lie in the Verbitsky component
allow us to completely determine the second Betti number as well as all the generalized Fujiki
constants, in particular all the Chern numbers including the Euler
characteristic $C(c_{2n})=\int_X c_{2n}$.

\section{Further discussions}
\label{sec:further_discussions}

We see that the Riemann--Roch polynomial $\RR_X(q)$ of a hyperkähler manifold $X$ is a very important notion: it puts strong topological restriction on $X$, namely an upper bound for the second Betti number. We now formulate some conjectures on the shape of such polynomials and discuss some possible ways of studying them.

Recall from Theorem~\ref{thm:RR_td_1/2} that the polynomial $\RR_{X,1/2}(q)$ factors as a $n$-th power. The proof by Nieper-Wißkirchen \cite{NieperWissHRRonIHS} uses the machinery of Rozansky--Witten invariants.
We will briefly explain the proof, and discuss the possibility of using this method to study the Riemann--Roch polynomial $\RR_{X}(q)$.
\subsection{Conjectural form of the Riemann--Roch polynomial}
Motivated by the above discussions, we speculate about the general shape of the Riemann--Roch polynomial of certain symplectic varieties.

We make the following conjecture. Similar conjectures have already been formulated by Ríos Ortiz and Jiang in~\cite[Conj.~1.3]{JiangRR}.
\begin{conjecture}
\label{conj:general_form_RR_polynomial}
Let $X$ be a primitively symplectic orbifold of dimension $2n$.
\begin{enumerate}
\item
The Riemann--Roch polynomial $\RR_X(q)$ has $n$ distinct negative real roots forming an arithmetic sequence. 
\item
If $X$ is smooth, then its Riemann--Roch polynomial $\RR_X(q)$ has even negative integer roots $\lambda_1, \dots, \lambda_n$ satisfying $\lambda_i-\lambda_{i-1}=2$. 
\end{enumerate}
\end{conjecture}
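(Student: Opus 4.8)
The plan is to bootstrap from the rigidity already available for the square-root polynomial. By Theorem~\ref{thm:RR_td_1/2} the individual constants $C(\td^{1/2}_{2k})$ are governed by the single parameter $r_X$, so the ``linear'' characteristic data of $X$ is essentially one-dimensional; the difficulty is that the coefficients $A_i=C(\td_{2n-2i})/(2i)!$ of $\RR_X$ are \emph{not} linear in this data. Using $(\td^{1/2})^2=\td$ one has $\td_{2k}=\sum_{a+b=k}\td^{1/2}_{2a}\,\td^{1/2}_{2b}$ in $H^{4k}(X,\bR)$, and linearity of the generalized Fujiki constant gives
\[
C(\td_{2k})=\sum_{a+b=k}C\!\left(\td^{1/2}_{2a}\cdot\td^{1/2}_{2b}\right).
\]
The first step is therefore to understand the \emph{product} constants $C(\td^{1/2}_{2a}\cdot\td^{1/2}_{2b})$, which carry genuinely more information than the $C(\td^{1/2}_{2k})$ themselves. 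I would attack these through the Rozansky--Witten graph calculus: the Wheeling Theorem already extracts the connected (wheel) contributions that produce the $n$-th power in Theorem~\ref{thm:RR_td_1/2}, and the product constants correspond to gluing two such wheel-systems along an additional pair of legs. The hope is to evaluate this gluing in closed form, as is done in low degree for $C(\ch_4)$ in Conjecture~\ref{conj:Mercedes_graph} and in Corollary~\ref{cor:relation_td1/2}.

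Assuming such an evaluation, the second step is to show that the resulting polynomial is forced into arithmetic-progression form. Heuristically, passing from $\td^{1/2}$ to $\td$ should ``spread out'' the single $n$-fold root of $\RR_{X,1/2}$ into $n$ distinct roots: a product $A_0\prod_{j}\bigl(q-\lambda-(j-\tfrac{n+1}2)d\bigr)$ degenerates to $A_0(q-\lambda)^n$ as the spacing $d\to0$, and I expect the graph evaluation of Step~1 to produce exactly the nonzero value of $d$ (equal to $2$ in the smooth case). Concretely I would aim to prove that $\RR_X(q)$ and $\RR_X(q-d)$ differ by a ratio of two linear factors, a two-term recursion whose only degree-$n$ polynomial solutions are the shifted binomials $\binom{q/d+c}{n}$; combined with the normalisation $A_n=n+1$ this pins down all the roots. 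For the orbifold statement~(1) this uses only the polynomial $\RR_X$ and not the Euler characteristic, so the singular correction term recorded in Section~\ref{sec:orbifold_examples} plays no role here.

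For the integrality refinement in~(2) I would add the Hirzebruch--Riemann--Roch theorem: in the smooth case $\RR_X(q_X(c_1(L)))=\chi(X,L)\in\bZ$ for every $L\in\Pic(X)$. Together with the arithmetic-progression shape from Step~2, the integrality of these values and the parity constraints on the Chern classes of a holomorphic symplectic manifold should force the common difference to equal $2$ and the roots to be even integers, matching $\RR_{\mathrm K3^{[n]}}$ and $\RR_{\Kum_n}$.

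The main obstacle is Step~1: the factorisation of $\RR_{X,1/2}$ is \emph{local} in nature -- it follows from the pointwise Wheeling identity -- whereas the product constants $C(\td^{1/2}_{2a}\cdot\td^{1/2}_{2b})$, and hence the arithmetic-progression statement, are \emph{global}, exactly the distinction emphasised after Conjecture~\ref{conj:Mercedes_graph}. Controlling these products in all degrees, rather than only the first few as in Corollary~\ref{cor:relation_td1/2}, appears to require a genuinely new closed-form evaluation in the Rozansky--Witten graph algebra extending the Wheeling Theorem, or equivalently global information about the LLV module $H^\ast(X,\bR)$ beyond the Verbitsky component.
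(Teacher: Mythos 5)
This statement is a \emph{conjecture} in the paper, not a theorem: no proof of it exists there, and what you have written is a research programme rather than a proof. The paper's own Section~\ref{sec:further_discussions} pursues exactly the strategy you describe --- expand $\inner{\Omega^2,(1+\ell)^n}$ in the graph homology algebra and hope the image under $\RW_\sigma$ factorizes into linear factors --- and records precisely where it breaks down: for $n=2,3$ one gets the factorization (conditional on $b_{\Theta_2}<0$, i.e.\ $C(\ch_4)>0$, itself only known for $n=2$), but already for $n=4$ a new graph $\Xi$ appears whose Rozansky--Witten invariant is not controlled by the Wheeling Theorem, and the factorization only goes through assuming the further open Conjecture~\ref{conj:Generalized_Fujiki_ch_8_via_RW}. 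Your Step~1 (a closed-form evaluation of the product constants $C(\td^{1/2}_{2a}\cdot\td^{1/2}_{2b})$ in all degrees) is therefore not a step but the entire open problem: as you yourself note, and as the paper emphasizes after Conjecture~\ref{conj:Mercedes_graph}, these constants are global invariants not determined by the local/pointwise Wheeling identity, and no mechanism is offered for computing them.

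Two further points would fail even granting Step~1. Your Step~2 recursion ``$\RR_X(q)$ and $\RR_X(q-d)$ differ by a ratio of two linear factors'' is logically equivalent to the roots forming an arithmetic progression with difference $d$, so establishing it is not a reduction but a restatement; nothing in the Rozansky--Witten formalism is shown to produce such a functional equation. And for part~(2), integrality of $\chi(X,L)=\RR_X(q_X(c_1(L)))$ constrains $\RR_X$ only on the set of values represented by the Beauville--Bogomolov--Fujiki form on line bundles; this gives divisibility conditions on the coefficients but does not force the roots to be even integers spaced by $2$ --- indeed the paper's orbifold examples in Section~\ref{sec:orbifold_examples} (e.g.\ the Nikulin orbifold with roots $-3\pm\tfrac{\sqrt2}{2}$) show that the integrality input alone, which survives in the orbifold setting up to a correction term, is compatible with irrational roots. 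The smoothness hypothesis must enter in an essential and currently unidentified way.
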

The second point is a slight strengthening of \cite[Conj.\ 1.3(3)]{JiangRR}. Note that it fails already in the case of four-dimensional orbifolds as demonstrated in Section~\ref{sec:orbifold_examples} and should necessarily involve the smoothness assumption.

By Remark~\ref{rmk:RR_split}, Conjecture~\ref{conj:general_form_RR_polynomial} (1) would imply the inequality \eqref{eq:hypothesis} and therefore yield the bound on the second Betti number.

\subsection{Rozansky--Witten invariants}
We give a very rough overview of parts of Rozansky--Witten theory that we want to employ. For proofs, details and a general overview we refer mainly to the book \cite{NieperWißkirchenbook}. See also~\cites{JiangRR, SawonThesis, HitchinSawon}. 

After choosing a symplectic form $\sigma \in H^0(X,\Omega_X^2)$, the \textit{Rozansky--Witten weight system} $\RW_\sigma$ is a ring homomorphism
\begin{equation}
    \RW_\sigma \colon B \to H^\ast(X,\bC),
\end{equation}
where $B$ denotes the graph homology space, i.e., the $\bC$-algebra spanned by all unitrivalent graphs modulo the antisymmetry and IHX relation. Important graphs are $\ell$, the unique univalent graph with two vertices, $\Theta$, the trivalent graph $\ominus$ with two vertices, and the $2k$-wheels $w_{2k}$ which, for example, looks like 
\begin{equation*}\resizebox{.05\hsize}{!}{$\sun$}
\end{equation*}
for $k=4$.

Using the $2k$-wheels, we can define the wheeling element
\[
\Omega \coloneqq \exp \left( \sum_{k=1}^\infty b_{2k}w_{2k} \right)
\]
contained in the completion $\hat{B}$ of $B$ with $b_{2k}$ the modified Bernoulli numbers. We have
\begin{itemize}
    \item $\RW_\sigma(\ell)=2\sigma$,
    \item $\RW_\sigma(\Theta)=b_{\Theta} \left[\frac{2\sigmabar}{q(\sigma+\sigmabar)}\right]$, where $b_\Theta=48\,r_X=\frac{2(2n-1)C(c_2)}{C(1)}$~\cite[Prop.~7]{NieperWissHRRonIHS}, 
    \item $\RW_\sigma(w_{2k})=-(2k)!\,\ch_{2k}$,
    \item $\RW_\sigma(\Omega)=\td^{1/2}.$
\end{itemize}


There is a bilinear product $\inner{-,-}$ on the graph homology space defined by summing over all possible ways of gluing all univalent vertices of the graphs under consideration, see \cite[Def.\ 2.39]{NieperWißkirchenbook} for a precise account. 
One form of the Wheeling Theorem is the following~\cite[Cor.\ 2.3]{NieperWißkirchenbook}.
\begin{theorem}
The map
\[
\inner{\Omega,-}\colon x\mapsto \inner{\Omega,x}
\]
respects the ring structure on $B$ given by disjoint union.
\end{theorem}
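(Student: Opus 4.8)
The plan is to read this as the operator form of the Wheeling Theorem and to organize everything around the Hopf-algebra structure on $B$. Besides the disjoint-union product $\sqcup$, the graph homology space carries a coproduct $\Delta$ that splits a graph into its connected components in all possible ways; together these make $B$ a commutative, cocommutative Hopf algebra whose primitive elements are exactly the connected graphs. In this language the pairing $\inner{-,-}$ is a contraction that glues univalent vertices, and the assertion is precisely that contracting against the fixed element $\Omega$ is multiplicative for $\sqcup$. I would therefore aim to reduce ``respects $\sqcup$'' to the single statement that $\Omega$ is group-like.

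The first step is to show that $\Omega$ is group-like. Each wheel $w_{2k}$ is a connected graph, hence primitive, so $\Delta w_{2k}=w_{2k}\otimes 1+1\otimes w_{2k}$; exponentiating the sum $\sum_k b_{2k}w_{2k}$ of primitives then gives $\Delta\Omega=\Omega\otimes\Omega$. The second step is to express how the gluing pairing interacts with $\Delta$. When the legs of $\Omega$ are contracted into a disjoint union $x\sqcup y$, each leg is glued against a leg lying in $x$ or in $y$; the terms in which every connected component of $\Omega$ is contracted entirely into one factor assemble, using $\Delta\Omega=\Omega\otimes\Omega$, into the main contribution
\[
\inner{\Omega,x}\cdot\inner{\Omega,y}.
\]
Granting that these are the only surviving terms, one obtains $\inner{\Omega,x\sqcup y}=\inner{\Omega,x}\cdot\inner{\Omega,y}$, which is the theorem. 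Along the way I would check that all operations descend to graph homology, i.e.\ are well defined modulo the antisymmetry and IHX relations, and that in each fixed degree only finitely many wheels contribute, so that the contraction against $\Omega\in\hat B$ converges.

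The hard part is exactly the clause ``these are the only surviving terms'': one must control the contributions in which a single connected component of $\Omega$ straddles both factors $x$ and $y$, thereby bridging them and spoiling the disjoint union. The vanishing of these straddling contributions in graph homology is the genuine content of wheeling, and it is what pins down the modified Bernoulli numbers $b_{2k}$; no purely formal Hopf-algebraic manipulation suffices. The cleanest route I know to this vanishing is through the Aarhus integral and the computation of the Kontsevich integral of the unknot of Bar-Natan--Garoufalidis--Rozansky--Thurston. I would thus treat the group-likeness and the leg-distribution bookkeeping above as the conceptual skeleton and invoke that machinery, as packaged in \cite{NieperWißkirchenbook}, for the analytic heart of the argument, with the remaining verifications (well-definedness modulo relations and convergence in $\hat B$) being routine.
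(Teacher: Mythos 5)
The paper gives no proof of this statement at all: it is quoted as ``one form of the Wheeling Theorem'' with a citation to Nieper-Wi{\ss}kirchen's book, so there is nothing internal to compare your argument against. Your proposal correctly identifies the result as the Wheeling Theorem and, after a sensible Hopf-algebraic framing (group-likeness of $\Omega$ from the primitivity of the wheels, and sorting the gluings in $\inner{\Omega, x\sqcup y}$ by how the components of $\Omega$ distribute over the two factors), honestly flags that the vanishing of the straddling contributions is the genuine, non-formal content and defers it to the Bar-Natan--Garoufalidis--Rozansky--Thurston machinery as packaged in the same reference --- so in substance you and the paper lean on the same external result, with your sketch adding a correct conceptual skeleton rather than an independent proof.
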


There is also a bilinear product $\inner{-,-}_\sigma$ defined on the cohomology \cite[Def.\ 3.9]{NieperWißkirchenbook}, which depends on the symplectic form $\sigma$ chosen. We use the subscript $\sigma$ to emphasize this dependence. The map $\RW_\sigma$ respects the two bilinear products~\cite[Prop.\ 3.4]{NieperWißkirchenbook}
\[
\RW_\sigma(\inner{x,y})=\inner{\RW_\sigma(x),\RW_\sigma(y)}_\sigma.
\]
This is the crucial result which allows us to transport relations present inside the graph homology space to the cohomology of $X$.


\bigskip

Generalized Fujiki constants naturally appear in the study of Rozansky--Witten invariants, which can already be seen in the above formula for $\RW_\sigma(\Theta)$.
The key idea for the formula is that $\RW_\sigma(\Theta)$ is a class in $H^{0,2}(X)$,
which is generated by $[\sigmabar]$. So we can uniquely determine the class just by a scalar. To determine this number, one could cup the two classes with
$\exp(\sigma+\sigmabar)$ and compare the integral.

To illustrate this method, we determine the value of $\RW_\sigma(\Theta_2)$, where $\Theta_2$ is the necklace graph with two beads.
\begin{proposition}
\label{prop:RW_Theta2}
We have
\[
\begin{aligned}
\RW_\sigma(\Theta_2)&=-\frac{4\int (c_2^2-2c_4)\exp(\sigma+\sigmabar)}
{5n(n-1)\int \exp(\sigma +\sigmabar)}[\sigmabar]^2\\
&=-\frac{4(2n-1)(2n-3)C(c_2^2-2c_4)}{5C(1)}\left[\frac{2\sigmabar}{q(\sigma+\sigmabar)}\right]^2.
\end{aligned}
\]
\end{proposition}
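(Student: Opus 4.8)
The plan is to compute $\RW_\sigma(\Theta_2)$ using the same strategy that was sketched for $\RW_\sigma(\Theta)$: the graph $\Theta_2$ is a necklace with two trivalent beads, so it has no univalent vertices and $\RW_\sigma(\Theta_2)$ lands in $H^{0,4}(X)$. That space is one-dimensional, generated by $[\sigmabar]^2$, so there is a unique scalar $\kappa$ with $\RW_\sigma(\Theta_2)=\kappa[\sigmabar]^2$, and the whole computation reduces to pinning down $\kappa$. To extract it I would pair both sides against a top-degree class obtained by cupping with $\exp(\sigma+\sigmabar)$ and integrating, exactly as in the determination of $b_\Theta$.

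First I would identify $\RW_\sigma(\Theta_2)$ in terms of the wheel classes. The necklace $\Theta_2$ with two beads should, under the Rozansky--Witten weight system, be expressible through the $2k$-wheels, and the relevant contribution is the $4$-wheel $w_4$, whose image is $\RW_\sigma(w_4)=-4!\,\ch_4=-24\ch_4=-24(\tfrac12 c_2^2-c_4)=-12(c_2^2-2c_4)$. This is where the combination $c_2^2-2c_4$ enters and explains the numerator in the statement. I would use the graph-homology identities (antisymmetry and IHX, together with the explicit values $\RW_\sigma(\ell)$, $\RW_\sigma(\Theta)$, $\RW_\sigma(w_{2k})$ recorded in the excerpt) to write $\Theta_2$ in a form whose image is a known multiple of $c_2^2-2c_4$ times a power of $[\sigmabar]$.

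Next I would carry out the pairing. Cupping $\RW_\sigma(\Theta_2)=\kappa[\sigmabar]^2$ with $\exp(\sigma+\sigmabar)$ and integrating isolates $\kappa$ against $\int[\sigmabar]^2\exp(\sigma+\sigmabar)$, while cupping the wheel expression with $\exp(\sigma+\sigmabar)$ produces $\int(c_2^2-2c_4)\exp(\sigma+\sigmabar)$. Both integrals are evaluated using the Fujiki relations: the only surviving term of $\exp(\sigma+\sigmabar)$ is the one of complementary degree, and integrals of the shape $\int\alpha\cdot(\sigma+\sigmabar)^{m}$ reduce via Theorem~\ref{thm:fujiki} to generalized Fujiki constants. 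Concretely, $\int(c_2^2-2c_4)\exp(\sigma+\sigmabar)$ becomes a constant multiple of $C(c_2^2-2c_4)$ times $q(\sigma+\sigmabar)^{n-2}$, and $\int[\sigmabar]^2\exp(\sigma+\sigmabar)$ becomes a constant multiple of $C(1)$ times $q(\sigma+\sigmabar)^{n}$; taking the ratio yields both displayed forms, the first being the raw integral ratio and the second its rewriting in terms of Fujiki constants and $[2\sigmabar/q(\sigma+\sigmabar)]^2$. The combinatorial factor $5n(n-1)$ and the numerical constants should fall out of the binomial coefficients in expanding $\exp(\sigma+\sigmabar)$ together with the value of $q(\sigma+\sigmabar)$.

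The main obstacle I anticipate is bookkeeping on two fronts: first, correctly resolving $\Theta_2$ into wheels inside $B$ so that the scalar multiplying $c_2^2-2c_4$ is exactly right (getting the sign and the factor that produces $\ch_4$ rather than some other normalization), and second, tracking all the combinatorial constants — the binomial coefficients from $\exp(\sigma+\sigmabar)$, the factorials in the definition of the Fujiki constants, and the factor $q(\sigma+\sigmabar)$ — so that the final ratio collapses to the clean coefficient $4(2n-1)(2n-3)/5$. The passage between the two displayed expressions is purely a substitution using Theorem~\ref{thm:fujiki} and the relation $\int\exp(\sigma+\sigmabar)=C(1)\,q(\sigma+\sigmabar)^{n}/(2n)!\cdot(\text{const})$, so once the integral form is established the second form is essentially automatic.
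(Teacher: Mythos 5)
Your overall architecture matches the paper's proof: reduce to a scalar in the one-dimensional space $H^{0,4}(X)=\bC\cdot[\sigmabar]^2$, relate $\Theta_2$ to the $4$-wheel, extract the scalar by cupping with $\exp(\sigma+\sigmabar)$ and integrating, and convert to Fujiki constants. But the step where you connect $\Theta_2$ to $w_4$ has a genuine gap. You propose to ``write $\Theta_2$ in a form whose image is a known multiple of $c_2^2-2c_4$'' using antisymmetry and IHX; this cannot work, because both relations preserve the set of univalent vertices, so the closed graph $\Theta_2$ is never a linear combination of wheels (which have legs) inside $B$. The actual mechanism is the gluing pairing: one verifies $\inner{w_4,\ell^2}=20\Theta_2$ combinatorially, and then uses the fact that $\RW_\sigma$ intertwines the graph-homology pairing $\inner{-,-}$ with the cohomological pairing $\inner{-,-}_\sigma$, giving $\RW_\sigma(\Theta_2)=\tfrac1{20}\inner{\RW_\sigma(w_4),4\sigma^2}_\sigma=-\tfrac45\inner{c_2^2-2c_4,\exp\sigma}_\sigma$. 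The combinatorial factor $20$ is exactly where the $5$ in the denominator comes from, and without the pairing identity you have no way to produce it.

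Two further points. First, a second nontrivial ingredient is missing from your plan: to pass from $\inner{c_2^2-2c_4,\exp\sigma}_\sigma$ (which lives in $H^{0,\ast}$) to the ordinary integral $\int(c_2^2-2c_4)\exp(\sigma+\sigmabar)$ one needs the identity $\int_X\inner{\alpha,\exp\sigma}_\sigma\exp(\sigma+\sigmabar)=\int_X\alpha\exp(\sigma+\sigmabar)$; this is not automatic bookkeeping. Second, your normalization of the Chern character is off: with $c_1=c_3=0$ one has $\ch_4=\tfrac1{12}(c_2^2-2c_4)$, not $\tfrac12c_2^2-c_4$, so $\RW_\sigma(w_4)=-2(c_2^2-2c_4)$ rather than $-12(c_2^2-2c_4)$. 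The remaining computations you describe — $\int\sigmabar^2\exp(\sigma+\sigmabar)=n(n-1)\int\exp(\sigma+\sigmabar)$ and the Fujiki-relation substitution yielding the second displayed formula — do match the paper.
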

\begin{proof}
Using the definition of the pairing $\inner{-,-}$ on the graph homology space, one can verify that
\[
\inner{w_4,\ell^2} = 20\Theta_2.
\]
Hence
\[
\begin{aligned}
\RW_\sigma(\Theta_2) &= \RW_\sigma\left(\frac1{20}\inner{w_4, \ell^2}\right)=
\frac1{20}\inner{\RW_\sigma(w_4), \RW_\sigma(\ell^2)}_\sigma\\
&=\frac1{20}\inner{-24\left(\tfrac1{12}c_2^2-\tfrac16c_4\right), 4\sigma^2}_\sigma=-\frac25\inner{c_2^2-2c_4,\sigma^2}_\sigma=-\frac45\inner{c_2^2-2c_4,\exp\sigma}_\sigma.
\end{aligned}
\]
Cupping it with $\exp(\sigma+\sigmabar)$ and comparing the integral, we get
\[
\RW_\sigma(\Theta_2) = -\frac{4\int\inner{c_2^2-2c_4,\exp\sigma}_\sigma\exp(\sigma +
\sigmabar)}{5\int\sigmabar^2\exp(\sigma+\sigmabar)}[\sigmabar]^2.
\]
For the denominator, we can simplify it as
\[
\begin{aligned}
\int_X\sigmabar^2\exp(\sigma+\sigmabar)&=
\int_X\sigmabar^2\frac1{(2n-2)!}(\sigma+\sigmabar)^{2n-2}\\
&=\int_X \frac1{n!(n-2)!}(\sigma\sigmabar)^{n}\\
&=n(n-1)\int_X \exp(\sigma+\sigmabar).
\end{aligned}
\]
For the numerator, we use the following equality~\cite[Lem.\ 3.4]{NieperWißkirchenbook}
\[
\int_X\inner{\alpha,\exp\sigma}_\sigma\exp(\sigma+\sigmabar) = \int_X
\alpha\exp(\sigma+\sigmabar).
\]
This shows the first equality that we want to prove.

For the second equality, we note that for a class of type $(2j,2j)$, the Fujiki
relations give
\[
\int_X \alpha \exp(\sigma+\sigmabar)
= \int_X \alpha \cdot \frac1{(2n-2j)!}(\sigma+\sigmabar)^{2n-2j}
= \frac{C(\alpha)}{(2n-2j)!}q(\sigma+\sigmabar)^{n-j}.
\]
Taking $\alpha$ to be $1_X$ and $c_2^2-2c_4$ respectively, we get the desired
equality.
\end{proof}
In general, for a trivalent graph $\Gamma$ with $2k$ vertices, there is a number $b_\Gamma$ independent of the symplectic form $\sigma$ chosen, such that we have
\[
\RW_\sigma(\Gamma)=b_\Gamma\left[\frac{2\sigmabar}{q(\sigma+\sigmabar)}\right]^k\in H^{0,2k}(X).
\]
For example, we have obtained that
\[
b_\Theta=\frac{2(2n-1)C(c_2)}{C(1)}, \quad
b_{\Theta_2}=-\frac{4(2n-1)(2n-3)C(c_2^2-2c_4)}{5C(1)}.
\]
This is the same notation used by Sawon in~\cites{SawonThesis,Sawon2021}, although he only used the letter $b_\Gamma$ for graphs with exactly $2n$ vertices and referred to those as the \emph{Rozansky--Witten invariants} of $X$. By the properties of the map $\RW_\sigma$, the values $b_\Gamma$ are multiplicative with respect to disjoint union.

\bigskip

There is another way to obtain the value of $\RW_\sigma(\Theta_2)$. Namely
\[
\RW_\sigma(2\Theta_2) = \RW_\sigma(\inner{w_2,w_2})=4 \inner{c_2,c_2}_\sigma
\]
where we used the relation $\RW_\sigma(w_{2})=2c_2$. We therefore obtain from Proposition~\ref{prop:RW_Theta2} the equality
\[
\inner{c_2, c_2}_\sigma=\frac{b_{\Theta_2}}{2}\left[\frac{2\sigmabar}{q(\sigma+\sigmabar)}\right]^2 \in {H}^4(X,\mathcal{O}_X).
\]
We expect that this equality is equivalent to the equality obtained in Corollary~\ref{cor:relation_td1/2}, but have not pursued this further.

\subsection{Proof of Theorem~\ref{thm:RR_td_1/2}}
\label{sec:proof_RR_td_1/2}
Using the map $\RW_{\sigma}$ and the Wheeling Theorem, we can obtain a very conceptual proof and see why the polynomial $\RR_{X,1/2}(q)$ factorizes as an $n$-th power.
\begin{proof}[Proof of Theorem~\ref{thm:RR_td_1/2}]
For a class $\alpha$ of degree $(2k,2k)$ admitting a generalized Fujiki
constant, we follow the same method as in the proof of Proposition~\ref{prop:RW_Theta2} to compute
\begin{equation}
\label{eq:alpha_bilinear}
\begin{aligned}
\inner{\alpha,(2\sigma)^k}_\sigma&=
2^kk!\inner{\alpha,\exp\sigma}_\sigma\\
&=2^kk!\frac{\int\inner{\alpha,\exp\sigma}_\sigma\exp(\sigma+\sigmabar)}
{n(n-1)\cdots(n-(k-1))\int\exp(\sigma+\sigmabar)}[\sigmabar]^k\\
&=\frac{2^k}{\binom nk}\frac{\int\alpha\exp(\sigma+\sigmabar)}
{\int\exp(\sigma+\sigmabar)}[\sigmabar]^k\\
&=\frac{2^k}{\binom nk}\frac{\frac{C(\alpha)}{(2n-2k)!}
q(\sigma+\sigmabar)^{n-k}}{\frac{C(1)}{(2n)!}
q(\sigma+\sigmabar)^n}[\sigmabar]^k\\
&=\frac{1} {\binom
nk\frac{C(1)}{(2n)!}}\frac{C(\alpha)}{(2n-2k)!}
\left[\frac{2\sigmabar}{q(\sigma+\sigmabar)}\right]^k.
\end{aligned}
\end{equation}
We can take $\alpha$ to be $\td^{1/2}_{2k}$, which gives us
\[
\begin{aligned}
\frac{C(1)}{(2n)!}\inner{\td^{1/2},(1+2\sigma)^n}_\sigma&=
\frac{C(1)}{(2n)!}\inner{\td^{1/2},\sum_{k=0}^n\binom{n}{k}(2\sigma)^k}_\sigma\\
&= \sum_{k=0}^n\binom{n}{k}\frac{C(1)}{(2n)!}\inner{\td^{1/2}_{2k},(2\sigma)^k}_\sigma\\
&= \sum_{k=0}^n \frac{C(\td^{1/2}_{2k})}{(2n-2k)!}
\left[\frac{2\sigmabar}{q(\sigma+\sigmabar)}\right]^k\\
&=\RR'_{X,1/2}\left(\left[\frac{2\sigmabar}{q(\sigma+\sigmabar)}\right]\right).
\end{aligned}
\]
Here $\RR'_{X,1/2}(q) \coloneqq q^n\RR_{X,1/2}(1/q)$ is the polynomial obtained by reversing the
coefficients. The polynomial evaluated at the class $\left[\frac{2\sigmabar}{q(\sigma+\sigmabar)}\right]$ is an element in the cohomology ring, with terms in various degrees.

On the graph homology side, the Wheeling Theorem provides the relation 
\[
\inner{\Omega,\left(1+\ell\right)^n}=
\inner{\Omega,1+\ell}^n.
\]
Since the Rozansky--Witten invariant $\RW_{\sigma}$ is a ring homomorphism respecting the bilinear form $\inner{-,-}$, we get
\[
\inner{\td^{1/2}, (1+2\sigma)^n}_\sigma=\inner{\td^{1/2},1+2\sigma}_\sigma^n.
\]
Hence the polynomial $\RR_{X,1/2}(q)$ must indeed factorize as an $n$-th power.
\end{proof}

\subsection{Riemann--Roch polynomial via RW invariants}
Following the idea of the proof of Theorem~\ref{thm:RR_td_1/2}, if we want to study the Riemann--Roch polynomial $\RR_X$,
we should replace $\alpha$ with $\td_{2k}$ in~\eqref{eq:alpha_bilinear}: summing over all $k$ we get similarly
\[
\frac{C(1)}{(2n)!}\inner{\td,(1+2\sigma)^n}_\sigma=\RR'_X\left(\left[\frac{2\sigmabar}{q(\sigma+\sigmabar)}\right]\right).
\]
So for the same strategy to work, we need to study how the graph homology element
\[
\inner{\Omega^2,\left(1+\ell\right)^n}
\]
might potentially factorize into linear terms. Since the multiplication for the graph homology classes is the disjoint union, this would unfortunately not be possible in general. Below we compute its value for $n\le 4$:
\[
\begin{aligned}
\inner{\Omega^2,1+\ell}&=1+\frac 1{12}\Theta,\\
\inner{\Omega^2,(1+\ell)^2}&=1+\frac 1{12}2\Theta+\frac
1{12^2}(\Theta^2+\Theta_2),\\
\inner{\Omega^2,(1+\ell)^3}&=1+\frac 1{12}3\Theta+\frac
1{12^2}3(\Theta^2+\Theta_2)+\frac1{12^3}(\Theta^3+3\Theta\Theta_2),\\
\inner{\Omega^2,(1+\ell)^4}&=1+\frac 1{12}4\Theta+\frac
1{12^2}6(\Theta^2+\Theta_2)+\frac1{12^3}4(\Theta^3+3\Theta\Theta_2)\\
&\quad +\frac1{12^4}(\Theta^4+6\Theta^2\Theta_2+3\Theta_2^2
+\tfrac{144}{25}\Xi-\tfrac{162}{25}\Theta_4),
\end{aligned}
\]
where $\Xi$ is the extra graph for $n=4$. We study the implications on the Riemann--Roch polynomial.


\begin{itemize}
\item When $n=2$, we get
\[
\RR_X(q) = \frac{C(1)}{(2\cdot 2)!}\left(q^2+\frac1{12}2b_{\Theta}q+\frac1{12^2}(b_{\Theta}^2+b_{\Theta_2})\right).
\]
For the polynomial to admit two real roots, the value $b_{\Theta_2}$ needs to be negative, or equivalently, the integral $C(\ch_4)=\int_X \ch_4$ needs to be positive. For smooth hyperkähler fourfolds, this indeed holds by the bound of Guan (see~\cite[Lem.\ 4.6]{OberdieckSongVoisin} or~\cite[Thm.\ 7]{Sawon2021}).

\item When $n=3$, the graph homology class admits a factor $1+\frac{1}{12}\Theta$,
so we also get a factorization for the Riemann--Roch polynomial
\[
\RR_X(q)=\frac{C(1)}{(2\cdot 3)!}\left(q+\frac1{12}b_{\Theta}\right)
\left(q^2+\frac1{12}2b_{\Theta}q+\frac1{12^2}(b_{\Theta}^2+3b_{\Theta_2})\right).
\]
So if $b_{\Theta_2}$ is negative, the polynomial will indeed admit three real roots forming an arithmetic sequence, with difference $\frac1{12}\sqrt{-3b_{\Theta_2}}$.


\item When $n=4$, the graph homology class becomes more complicated due to the extra graph $\Xi$. If we expect the Riemann--Roch polynomial to admit four real roots forming an arithmetic sequence, this would lead to the following conjectural relations among certain generalized Fujiki constants.
\begin{conjecture}
\label{conj:Generalized_Fujiki_ch_8_via_RW}
If $X$ is of dimension $2n\geq 8$, then
\[
\frac{C(\ch_4^2+120\ch_8)\cdot C(1)}{C(\ch_4)^2}
=\frac{(5n+7)(2n-1)(2n-3)}{5(n+1)(2n-5)(2n-7)}.
\]
\end{conjecture}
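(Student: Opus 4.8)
The plan is to deduce the identity from the conjectural shape of the Riemann--Roch polynomial, in the same spirit as Proposition~\ref{prop:conj_RR_poly_implies_conj_C(ch_4)} derives Conjecture~\ref{conj:Mercedes_graph}, but now extracting the degree-eight coefficient rather than the degree-four one. Assuming Conjecture~\ref{conj:general_form_RR_polynomial}(1), write $\RR_X(q)=A_0\prod_{i=1}^n(q+\lambda_i)$ with the $\lambda_i$ in arithmetic progression and apply the Tschirnhaus shift $q\mapsto q-\frac{A_1}{nA_0}$ that centres the roots. The resulting monic polynomial is $\prod_{i}(y-d\,j_i)$ with $j_i=i-\frac{n+1}2$ symmetric about $0$ and $d$ the common difference, so its depressed coefficients are $\tilde p_k=d^k e_k(\{j_i\})$. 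Since $\{j_i\}$ is symmetric, all odd $e_k$ vanish and
\[
\frac{\tilde p_4}{\tilde p_2^2}=\frac{e_4(\{j_i\})}{e_2(\{j_i\})^2}=\frac12-\frac{3(3n^2-7)}{5n(n^2-1)}=\frac{(n-2)(n-3)(5n+7)}{10n(n-1)(n+1)}.
\]
This one identity, manifestly independent of $d$, is the sole input from the arithmetic-progression assumption; note that only part~(1) of Conjecture~\ref{conj:general_form_RR_polynomial} is needed here, not the integral spacing $d=2$ that enters Conjecture~\ref{conj:Mercedes_graph}.

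It then remains to rewrite $\tilde p_2$ and $\tilde p_4$ in terms of Chern character Fujiki constants. For $\tilde p_2$ this is immediate from Corollary~\ref{cor:first_four_C} together with $\ch_4=\frac1{12}(c_2^2-2c_4)$, which give $\tilde p_2=-\frac{(2n)!}{120\,(2n-4)!}\cdot\frac{C(\ch_4)}{C(1)}$; equating this with $\tilde p_2=-\frac{d^2 n(n^2-1)}{24}$ and setting $d=2$ already recovers Conjecture~\ref{conj:Mercedes_graph}. For $\tilde p_4$ I would pass through Rozansky--Witten theory, using $\RW_\sigma(\Omega^2)=\td$ to identify $A_4$ with $\frac{C(1)}{(2n)!}$ times the coefficient of $\bigl[\tfrac{2\sigmabar}{q(\sigma+\sigmabar)}\bigr]^4$ in $\inner{\Omega^2,(1+\ell)^n}$. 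The crucial structural fact is that the centering shift is exactly by $\frac{A_1}{nA_0}=\frac{b_\Theta}{12}$, so all $b_\Theta$-dependence cancels in $\tilde p_4$; for $n=4$ a direct computation from the displayed expansion gives
\[
\tilde p_4=\frac1{12^4}\Bigl(3\,b_{\Theta_2}^2+\tfrac{144}{25}\,b_\Xi-\tfrac{162}{25}\,b_{\Theta_4}\Bigr),\qquad \tilde p_2=\frac{b_{\Theta_2}}{24},
\]
and the same cancellation should persist for general $n$ with $n$-dependent weights on $b_\Xi$ and $b_{\Theta_4}$.

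The final step is to evaluate the two connected degree-eight invariants by the bracket method of Proposition~\ref{prop:RW_Theta2}. Expressing $\Theta_4$ and $\Xi$ through the glued brackets $\inner{w_8,\ell^4}$ and $\inner{w_4\sqcup w_4,\ell^4}$ and applying \eqref{eq:alpha_bilinear} should write $b_{\Theta_4}$ as a multiple of $C(\ch_8)$ and $b_\Xi$ as a combination of $C(\ch_4^2)$, $C(\ch_8)$ and $b_{\Theta_2}^2\propto C(\ch_4)^2$. The content of the conjecture is then that the combination $\tfrac{144}{25}b_\Xi-\tfrac{162}{25}b_{\Theta_4}$, modulo its $b_{\Theta_2}^2$ part, organizes into a multiple of $C(\ch_4^2+120\ch_8)$; granting this, the $d$-independent ratio above becomes a linear equation whose solution is $\frac{C(\ch_4^2+120\ch_8)\,C(1)}{C(\ch_4)^2}$, which should simplify to the claimed rational function of $n$.

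The main obstacle is the explicit determination of $b_\Xi$, and with it both the precise coefficient ``$120$'' and the global constant. Two difficulties arise: extending the $n=4$ expansion of $\inner{\Omega^2,(1+\ell)^n}$ to arbitrary $n$, i.e.\ computing the $\binom nk$-type multiplicities of $\Xi$ and $\Theta_4$ in the degree-eight coefficient; and handling $\Xi$ itself, which is not a necklace and hence not directly of the form $\inner{w_{2k},\ell^k}$, so that coupling $b_\Xi$ to the disconnected constant $C(\ch_4^2)$ requires resolving the gluings of $w_4\sqcup w_4$ in the two-dimensional space of connected degree-eight graphs. The appearance of the falling factorial $\frac{(2n)!}{(2n-8)!}$ in $b_{\Theta_4}$ explains the factor $(2n-5)(2n-7)$ in the conjectured denominator, and the common factor $(5n+7)$ shared by the ratio above and the target is a reassuring consistency check. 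A heavier alternative that avoids $\Xi$ altogether is to expand $C(\td_8)$ through the degree-eight Todd polynomial and then eliminate the spurious constants $C(\ch_6\ch_2)$, $C(\ch_4\ch_2^2)$, $C(\ch_2^4)$ using the relations forced by the factorization of $\RR_{X,1/2}$ in Theorem~\ref{thm:RR_td_1/2}; I expect both routes to converge on the same identity.
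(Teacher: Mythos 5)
The statement you address is a conjecture: the paper offers no proof of it, only the heuristic that it is what the arithmetic-progression expectation for the roots of $\RR_X$ forces on the degree-eight Fujiki data. Your proposal reconstructs exactly that heuristic, and the parts you carry out are correct: the centering shift equals $\frac{A_1}{nA_0}=\frac{b_\Theta}{12}$, the ratio $e_4/e_2^2=\frac{(n-2)(n-3)(5n+7)}{10n(n-1)(n+1)}$ for a symmetric arithmetic progression is right (and indeed independent of the common difference, so only part (1) of Conjecture~\ref{conj:general_form_RR_polynomial} enters), the formula $\tilde p_2=-\frac{(2n)!}{120(2n-4)!}\frac{C(\ch_4)}{C(1)}$ follows from Corollary~\ref{cor:first_four_C}, and your $n=4$ expression for $\tilde p_4$ agrees with the displayed expansion of $\inner{\Omega^2,(1+\ell)^4}$. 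For $n=4$ your ratio condition is equivalent to the paper's relation $\RW_\sigma(8\Xi-9\Theta_4)=\frac{5}{3(n+1)}\RW_\sigma(\Theta_2^2)$, so the two derivations are consistent.

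However, the step you single out as the main obstacle --- coupling $b_\Xi$ and $b_{\Theta_4}$ to $C(\ch_4^2+120\ch_8)$ --- is precisely the step the paper carries out unconditionally via Sawon's gluing computations $\tfrac1{384}\inner{w_4^2,\ell^4}=24\Xi+48\Theta_4+\tfrac{25}{4}\Theta_2^2$ and $\tfrac1{384}\inner{w_8,\ell^4}=7\Xi+\tfrac{287}{8}\Theta_4$: since $\RW_\sigma\big(\tfrac1{576}w_4^2-\tfrac1{336}w_8\big)=\ch_4^2+120\ch_8$, one gets $\inner{\ch_4^2+120\ch_8,(2\sigma)^4}_\sigma=\RW_\sigma\big(8\Xi-9\Theta_4+\tfrac{25}{6}\Theta_2^2\big)$, and $\tfrac{144}{25}\Xi-\tfrac{162}{25}\Theta_4=\tfrac{18}{25}(8\Xi-9\Theta_4)$, so no separate determination of $b_\Xi$ is needed and the coefficient $120$ is forced. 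The gaps that genuinely remain are the ones neither you nor the paper closes: the multiplicities of $\Xi$ and $\Theta_4$ in the degree-eight coefficient of $\inner{\Omega^2,(1+\ell)^n}$ are only computed for $n=4$, so the general-$n$ identity is an extrapolation from that case; and the whole derivation rests on the open Conjecture~\ref{conj:general_form_RR_polynomial}(1). Your proposal is therefore a correct account of why the statement is expected, matching the paper's own reasoning, but it is not a proof --- which is consistent with the statement being labelled a conjecture.
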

Admitting this relation, we would then get
\[
\inner{\ch_4^2+120\ch_8,(2\sigma)^4}_\sigma=\big(\tfrac{5}{3(n+1)}+\tfrac{25}6\big)\RW_\sigma(\Theta_2^2).
\]
On the other hand, based on the computation of Sawon~\cite{SawonThesis}, we have
\[
\begin{aligned}
\tfrac1{384}\inner{w_4^2,\ell^4}&=24\Xi+48\Theta_4+\tfrac{25}4\Theta_2^2,\\
\tfrac1{384}\inner{w_8,\ell^4}&=7\Xi+\tfrac{287}8\Theta_4.
\end{aligned}
\]
Taking a suitable linear combination and applying $\RW_\sigma$, we get
\[
\inner{\ch_4^2+120\ch_8,(2\sigma)^4}_\sigma=\RW_\sigma(8\Xi-9\Theta_4+\tfrac{25}6\Theta_2^2),
\]
so
\[
\RW_\sigma(8\Xi-9\Theta_4) = \tfrac{5}{3(n+1)}\RW_\sigma(\Theta_2^2).
\]
Hence we can express the Rozansky--Witten invariant of
$\frac{144}{25}\Xi-\frac{162}{25}\Theta_4=\frac{18}{25}(8\Xi-9\Theta_4)$ in terms
of $b_{\Theta_2}$, so the Riemann--Roch polynomial has the following form
\[
\begin{aligned}
\RR_X&(q) =\\ 
&\frac{C(1)}{(2\cdot 4)!}\left(q^2+\tfrac1{12}2b_\Theta q+\tfrac1{12^2}(b_\Theta^2+\tfrac35
b_{\Theta_2})\right)
\left(q^2+\tfrac1{12}2b_\Theta q+\tfrac1{12^2}(b_\Theta^2+\tfrac{27}5
b_{\Theta_2})\right).
\end{aligned}
\]
If $b_{\Theta_2}$ is negative, then it indeed admits four roots forming an arithmetic progression with difference $\frac16\sqrt{-\frac35b_{\Theta_2}}$.


\end{itemize}
\subsection{Conjectural value for generalized Fujiki constants}
\label{subsec:Generalized:Fujiki_Characteristic_classes}
In the above examples we see that the value $b_{\Theta_2}$ or equivalently $C(\ch_4)$ governs the differences between the roots of the Riemann--Roch polynomial. We speculate that the roots always form an arithmetic progression with difference~2. This is our main motivation for Conjecture~\ref{conj:Mercedes_graph}. Note that the conjectural value for $C(\ch_4)$ also predicts that one should always have $b_{\Theta_2}=-48(n+1)$ by Proposition~\ref{prop:RW_Theta2}.
It can also be seen as a weaker version of Conjecture~\ref{conj:general_form_RR_polynomial} (2), for purely algebraic reasons.
\begin{proposition}
\label{prop:conj_RR_poly_implies_conj_C(ch_4)}
Conjecture~\ref{conj:general_form_RR_polynomial} (2) implies Conjecture~\ref{conj:Mercedes_graph}. 
\end{proposition}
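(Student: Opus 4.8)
The plan is to translate Conjecture~\ref{conj:Mercedes_graph} into a statement about the coefficients $A_0,A_1,A_2$ alone, and then to recognize the resulting expression as a multiple of the variance of the roots of $\RR_X(q)$, for which the arithmetic-progression hypothesis gives an immediate value.

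First, since $c_1(X)=0$ one has $\ch_4=\tfrac1{12}c_2^2-\tfrac16c_4$, so by linearity of the generalized Fujiki constant together with Corollary~\ref{cor:first_four_C} I expect
\[
C(\ch_4)=\tfrac1{12}C(c_2^2)-\tfrac16C(c_4)=60\,(2n-4)!\left(\frac{(n-1)A_1^2}{nA_0}-2A_2\right),
\]
while $C(1)=(2n)!\,A_0$. Dividing, and using $(2n)!=(2n)(2n-1)(2n-2)(2n-3)(2n-4)!$ together with $(2n)(2n-2)=4n(n-1)$, the conjectural identity $C(\ch_4)/C(1)=\tfrac{5(n+1)}{(2n-1)(2n-3)}$ becomes equivalent to
\[
\frac{3A_1^2}{n^2A_0^2}-\frac{6A_2}{n(n-1)A_0}=n+1.
\]

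Next I would pass to the roots. Writing $\RR_X(q)=A_0\prod_{i=1}^n(q+\mu_i)$ with $\mu_i=-\lambda_i$, Vieta's formulas give $A_1/A_0=\sum_i\mu_i$ and $A_2/A_0=\sum_{i<j}\mu_i\mu_j$. A short symmetric-function manipulation, using $(\sum_i\mu_i)^2-2\sum_{i<j}\mu_i\mu_j=\sum_i\mu_i^2$, then identifies the left-hand side above as
\[
\frac{3A_1^2}{n^2A_0^2}-\frac{6A_2}{n(n-1)A_0}=\frac{3}{n(n-1)}\sum_{i=1}^n(\mu_i-\bar\mu)^2,\qquad\bar\mu=\tfrac1n\textstyle\sum_i\mu_i.
\]
This is the conceptual content: up to the universal prefactor, $C(\ch_4)/C(1)$ measures the dispersion of the roots of $\RR_X$, exactly as anticipated in Remark~\ref{rmk:RR_split}.

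Finally, Conjecture~\ref{conj:general_form_RR_polynomial}~(2) asserts that the $\mu_i$ form an arithmetic progression of common difference $2$, for which a direct computation gives $\sum_i(\mu_i-\bar\mu)^2=\tfrac13 n(n^2-1)$; substituting yields $\tfrac{3}{n(n-1)}\cdot\tfrac13 n(n^2-1)=n+1$, as required. I do not foresee a genuine obstacle here: the argument is a bookkeeping reduction followed by the standard sum-of-squares formula for an arithmetic progression, so the only care needed is in keeping the factorials and binomial coefficients straight in the first step. It is worth recording that only the common-difference-$2$ condition enters the computation—neither the evenness nor the integrality of the roots, nor the value of the mean $\bar\mu$, plays any role—which matches the fact that for a general common difference $d$ the same computation returns $C(\ch_4)/C(1)=\tfrac{d^2}{4}\cdot\tfrac{5(n+1)}{(2n-1)(2n-3)}$, singling out $d=2$ as precisely the value for which Conjecture~\ref{conj:Mercedes_graph} holds.
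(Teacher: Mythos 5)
Your proof is correct and follows essentially the same route as the paper: both arguments reduce $C(\ch_4)/C(1)$ to an expression in $A_0,A_1,A_2$ via Corollary~\ref{cor:first_four_C} and then substitute the arithmetic-progression form of the roots, the paper by brute-force expansion of $(q+a)(q+a+2)\cdots(q+a+2n-2)$ and you via Vieta. Your repackaging of the relevant combination as the variance of the roots is a cleaner way to organize the same computation, and the observation that only the common difference $d=2$ (not the integrality, parity, or mean of the roots) enters is a worthwhile refinement consistent with Remark~\ref{rmk:RR_split}.
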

\begin{proof}
By assumption, the roots of $\RR_X(q)$ form an arithmetic progression with difference 2, so we have
\begin{align*}
    \RR_X(q) &= \frac{C(1)}{(2n)!}(q+a)(q+a+2) \cdots (q+a+2n-2)\\
    &=\frac{C(1)}{(2n)!}\Big(q^n + (na+n(n-1)) q^{n-1} \\
    &\qquad + \Big(\tfrac{n(n-1)}2a^2 + (n-1)^2na +\tfrac{(3n-1)n(n-1)(n-2)}6 \Big)q^{n-2} + \dots \Big)
\end{align*}
Then by the result of Corollary~\ref{cor:first_four_C}, we may deduce the values for $C(c_2^2)$ and $C(c_4)$, and consequently $C(\ch_4)$, which turns out to depend only on $C(1)$ and $n$, and not on $a$.
\end{proof}

We also explore some consequences of Conjecture~\ref{conj:Mercedes_graph}.
\begin{proposition}
\label{prop:possible_Fujiki_dim4}
Assuming Conjecture~\ref{conj:Mercedes_graph}, for $n=2$ the following are the only
possibilities for the generalized Fujiki constants of a hyperkähler fourfold.
\[
\begin{array}{|c|c|c|c|}
\hline
C(1) & C(c_2) & C(c_2^2) & C(c_4)\\
\hline
3 & 30 & 828 & 324\\
9 & 54 & 756 & 108\\
\hline
\end{array}
\]
\end{proposition}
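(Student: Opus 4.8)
The plan is to show that, granting Conjecture~\ref{conj:Mercedes_graph}, the four constants $C(1),C(c_2),C(c_2^2),C(c_4)$ are all functions of $C(1)$ alone, and then to cut the possible values of $C(1)$ down to $\{3,9\}$ using integrality, positivity and the classical numerical constraints on hyperkähler fourfolds. For the reduction: for $n=2$ the conjecture reads $C(\ch_4)=5C(1)$, and since $\ch_4=\tfrac1{12}c_2^2-\tfrac16c_4$ this is $C(c_2^2)-2C(c_4)=60\,C(1)$. Hirzebruch--Riemann--Roch gives the constant term $A_2=C(\td_4)=\chi(\cO_X)=n+1=3$, which in the form used in Corollary~\ref{cor:first_four_C} is $3C(c_2^2)-C(c_4)=2160$. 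Solving this linear pair yields $C(c_4)=432-36\,C(1)$ and $C(c_2^2)=864-12\,C(1)$, and feeding these into Corollary~\ref{cor:relation_td1/2} (which for $n=2$ reads $7C(c_2^2)-4C(c_4)=15\,C(c_2)^2/C(1)$) gives $C(c_2)^2=4\,C(1)\bigl(C(1)+72\bigr)$. The two rows of the table are then exactly the values at $C(1)=3$ and $C(1)=9$.

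Next I would extract arithmetic constraints. Because $c_2^2,c_4\in H^8$, the numbers $C(c_2^2)=\int_Xc_2^2$ and $C(c_4)=\int_Xc_4=\chi_{\mathrm{top}}(X)$ are integer Chern numbers; positivity of the Euler characteristic forces $0<C(1)<12$, and $C(c_2^2)=864-12C(1)\in\bZ$ shows that $M:=12C(1)$ is an integer with $0<M<144$. Since $C(c_2)$ is a ratio $\int_Xc_2\beta^2/q_X(\beta)$ of integers it is rational, so $C(c_2)^2=\tfrac1{36}M(M+864)$ forces $M(M+864)$ to be a perfect square. Completing the square turns this into a factorization $ab=432^2$ with $a+b=2M+864$; an elementary scan of the divisors of $432^2=2^8 3^6$ in the window $864<a+b<1152$ leaves only $M\in\{3,18,36,108\}$, i.e. $C(1)\in\{\tfrac14,\tfrac32,3,9\}$.

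Finally I would eliminate the two spurious small values. Salamon's relation for hyperkähler fourfolds ($b_4=46+10b_2-b_3$) gives $\chi_{\mathrm{top}}=48+12b_2-3b_3$; comparing with $\chi_{\mathrm{top}}=C(c_4)=432-36C(1)$ yields $b_3=12C(1)+4b_2-128$, so $b_3\ge0$ forces $b_2\ge 32-3C(1)$. For $C(1)=\tfrac14$ or $\tfrac32$ this demands $b_2\ge 28$, contradicting Guan's bound $b_2\le23$; hence only $C(1)\in\{3,9\}$ survive, and substituting back into the formulas of the first paragraph produces precisely the two admissible rows.

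The one-parameter reduction is routine; the delicate part is the last elimination, which genuinely needs two independent inputs. Rationality of $C(c_2)$ together with the integrality and positivity of the Chern numbers is what makes the set of candidate $C(1)$ finite (Step on the diophantine equation), and Salamon's relation combined with Guan's bound $b_2\le23$ is what supplies the \emph{lower} bound on $C(1)$ needed to discard $C(1)=\tfrac14,\tfrac32$. I expect the main obstacle to be checking that no further candidate slips through the divisor analysis and that the Salamon--Guan elimination is airtight, since the upper bound on $b_2$ coming from Theorem~\ref{thm:bound_b2_c2_Verbitsky} alone does not suffice to rule out small values of $C(1)$.
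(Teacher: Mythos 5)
Your reduction to a one-parameter family --- $C(c_4)=432-36C(1)$, $C(c_2^2)=864-12C(1)$, $C(c_2)^2=4C(1)\bigl(C(1)+72\bigr)$ from the three relations --- is exactly the paper's, and your endgame uses the same toolkit (integrality of the Chern numbers, rationality of $C(c_2)$, Salamon's relation, Guan's bounds), merely organized as an explicit divisor analysis of the perfect-square condition rather than the paper's finite scan over $C(1)\in\tfrac13\bZ\cap[3,\tfrac{46}{3}]$.

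The gap is the step ``positivity of the Euler characteristic forces $0<C(1)<12$''. Positivity of $\chi_\top(X)=\int_X c_4$ is \emph{not} a known fact for hyperkähler fourfolds: the paper itself records at the end of Section~\ref{sec:The_Inequality} that $C(c_4)>0$ is open precisely when $n=2$ and $b_2(X)\in\{3,4,5\}$, and in its own proof of this proposition it only uses Guan's two-sided bound $-120\le c_4\le 324$. Replacing your unproven upper bound by the known one gives $C(1)\le\tfrac{46}{3}$, i.e.\ $M=12C(1)\le 184$, and your divisor scan then admits one more solution, $(a,b)=(192,972)$, i.e.\ $M=150$, $C(1)=\tfrac{25}{2}$, $C(c_2)=65$, $C(c_2^2)=714$, $C(c_4)=-18$. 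This candidate survives your final elimination: $b_3=12C(1)+4b_2-128=22+4b_2\ge 0$ gives no contradiction with $b_2\le 23$. It is killed only by the further fact that $b_3\equiv 0\pmod 4$ for a hyperkähler fourfold, since $22+4b_2\equiv 2\pmod 4$; this divisibility is exactly the input the paper uses (``$b_3(X)$ is a multiple of $4$'', which via Salamon forces $c_2^2\equiv 0\pmod 4$ and hence $3C(1)\in\bZ$). So you must either justify $\chi_\top>0$ (which is not available) or add the $4\mid b_3$ input; as written, the argument lets $C(1)=\tfrac{25}{2}$ through.
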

\begin{proof}
We have the following three relations
\[
\begin{aligned}
7C(c_2^2) - 4C(c_4) &= 15\tfrac{C(c_2)^2}{C(1)},\\
C(c_2^2) - 2C(c_4) &= 60C(1),\\
3C(c_2^2) - C(c_4) &= 2160,
\end{aligned}
\]
from which we may deduce that
\[
\begin{aligned}
C(c_2)&=2\sqrt{C(1)^2+72C(1)},\\
C(c_2^2)&=-12C(1)+864,\\
C(c_4)&=-36C(1)+432.
\end{aligned}
\]
The top-degree ones are just Chern numbers, and using the relations on Betti
numbers by Salamon, we have
\[
c_2^2=736+4b_2(X)-b_3(X),\quad
c_4=48+12b_2(X)-3b_3(X).
\]
Since $b_3(X)$ is a multiple of 4, the Chern number $c_2^2$ must also be a
multiple of 4, so we have $C(1)\in\frac1{3}\bZ$. By the bounds of Guan, we
have $-120\le c_4\le 324$, hence $\frac{46}3\ge C(1)\ge 3$, so we only have a
finite number of possibilities left.

By definition, the generalized Fujiki constant $C(c_2)$ should be
rational. Using this property we may verify that only the listed two cases are
possible, which are realized by $\mathrm K3^{[2]}$ and $\Kum_2$ respectively.
%
\end{proof}

This further reduces the number of possibilities for Betti numbers to 4, as stated in the introduction.

\begin{proof}[Proof of Corollary~\ref{cor:possible_Hodge_numbers_via_conjecture}]
From Salamon's relations \cite{Salamon} one obtains the formula
\[
c_4=48+12b_2(X)-3b_3(X).
\]
By Proposition~\ref{prop:possible_Fujiki_dim4} there are only two possible values for $c_4$ which together with previously obtained bounds from Guan yield the assertion. 
\end{proof}

\bigskip

Finally, motivated by the degree 4 case, we conjecture the following behavior to be true for arbitrary dimensions. The question was also asked independently in \cite{CaoOberdieckToda}.
\begin{conjecture}
\label{conj:generalized_fujiki_Chern_character}
For $k_1, \dots, k_r \in \bZ_{>0}$ with $k \coloneqq\sum_i k_i\leq n$ we have
\[
(-1)^{k}C(\ch_{2k_1} \cdots \ch_{2k_r})>0 \quad \text{ as well as } \quad C(c_{2k_1} \cdots c_{2k_r})>0.
\]
\end{conjecture}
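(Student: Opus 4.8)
The plan is to prove both families of inequalities \emph{pointwise}, on the level of Chern--Weil forms for the Ricci-flat Kähler metric $g$ carried by the hyperkähler manifold $X$; this is in line with the local nature of \eqref{eq:ineq_introduction_ch_4} emphasized above. Let $R$ denote the curvature of the holomorphic tangent bundle for $g$, an $\mathrm{End}(T_X)$-valued $(1,1)$-form, so that $\ch_{2k_i}$ is represented by $\frac{(-1)^{k_i}}{(2\pi)^{2k_i}(2k_i)!}\tr(R^{2k_i})$. Since $\sum_i k_i=k$, the imaginary units $(\sqrt{-1})^{2k_i}=(-1)^{k_i}$ combine to give the clean identity
\[
(-1)^{k}\,\ch_{2k_1}\cdots\ch_{2k_r}=\Big(\prod_i\tfrac{1}{(2\pi)^{2k_i}(2k_i)!}\Big)\,\tr(R^{2k_1})\wedge\cdots\wedge\tr(R^{2k_r}),
\]
so the sign $(-1)^k$ in the conjecture is exactly the one that strips the imaginary units from every Chern character factor. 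Fixing a Kähler representative $\omega$ and applying Theorem~\ref{thm:fujiki} with $\beta=[\omega]$, the sign of $C(\alpha)$ for a class $\alpha$ of type $(2k,2k)$ equals the sign of $\int_X\alpha\wedge\omega^{2n-2k}$, because $q_X([\omega])>0$. It therefore suffices to prove that the $(2n,2n)$-form $\tr(R^{2k_1})\wedge\cdots\wedge\tr(R^{2k_r})\wedge\omega^{2n-2k}$ is a nonnegative multiple of the volume form at every point, and not identically zero.

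For the single-factor case $r=1$ this should follow cleanly: wedging $\tr(R^{2k})$ with $\omega^{2n-2k}$ amounts, pointwise, to taking the trace of an even power of the Hermitian curvature operator of a Kähler metric, which is a sum of $2k$-th powers of real eigenvalues and hence $\geq 0$; for $k=1$ this is the classical fact that $c_2\wedge\omega^{2n-2}\propto|R|^2\geq0$ for a Ricci-flat metric. The structural input that should organize the general computation is that, after using the symplectic form $\sigma$ to identify $T_X\cong\Omega_X$, the hyperkähler curvature is a totally symmetric tensor $\phi\in\Sym^4$ of the cotangent space at each point (the Rozansky--Witten tensor). Under this identification each $\tr(R^{2k_i})$, wedged with the appropriate power of $\omega$, becomes a cyclic ``wheel'' contraction of $k_i$ copies of $\phi$, matching $\RW_\sigma(w_{2k_i})=-(2k_i)!\,\ch_{2k_i}$ and, more globally, the sign-definiteness one would want for the invariants $b_\Gamma$ attached to disjoint unions of wheels. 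Strictness $C>0$ (rather than $\geq0$) will come from the fact that $R$ does not vanish identically on a positive-dimensional compact hyperkähler manifold.

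The Chern class statement is best handled simultaneously by passing to Chern roots. Because $T_X\cong\Omega_X$, the roots occur in pairs $\pm y_1,\dots,\pm y_n$, so writing $u_j\coloneqq -y_j^2$ one gets $(-1)^k\ch_{2k}=\frac{2}{(2k)!}\sum_j u_j^{k}$ and $c(X)=\prod_j(1+u_j)$, whence $c_{2k}=e_k(u_1,\dots,u_n)$ is the $k$-th elementary symmetric function of the $u_j$. Thus the two families of inequalities are the power-sum and the elementary-symmetric manifestations of one and the same positivity: they would both follow if one shows that \emph{every} monomial $u_{j_1}\cdots u_{j_m}\wedge\omega^{2n-2m}$ is a nonnegative multiple of the volume form pointwise, i.e. that the curvature ``eigenforms'' $u_j$ are jointly positive. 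I would try to extract this from the $\mathrm{Sp}(n)\subset\mathrm{U}(2n)$-representation theory of the invariant polynomials involved, which should render the relevant invariants Schur-positive.

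The main obstacle is precisely the passage from $r=1$ to $r\ge2$, equivalently from a single wheel to a disjoint union of wheels. For one factor the positivity is a trace of an even power, hence a genuine sum of squares; for several factors one is wedging forms that are each positive only \emph{after} completion by a full power of $\omega$, and such products need not be positive in general—the appearance of the extra graph $\Xi$ in dimension $8$ already signals that arbitrary trivalent graphs carry no controlled sign. Establishing the sign-definiteness of the specific wheel-type contractions of the symmetric tensor $\phi$, or equivalently the joint pointwise positivity of the $u_j$, is therefore the crux; and it is conceivable that pointwise positivity fails for some products and must be replaced by a global argument via the Hodge--Riemann relations in the spirit of Proposition~\ref{prop:fundamental_inequality}.
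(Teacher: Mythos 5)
This statement is a \emph{conjecture} in the paper: the authors offer no proof, only the remark that they expect the positivity to hold pointwise and to be of local nature. Your proposal is a strategy in that same spirit, but it is not a proof, and the gap is not only the multi-wheel case you flag at the end --- it already occurs at $r=1$. The assertion that wedging $\tr(R^{2k})$ with $\omega^{2n-2k}$ ``amounts, pointwise, to taking the trace of an even power of the Hermitian curvature operator'' is not correct as stated: $\tr(R^{2k})$ is a $(2k,2k)$-form built from a product of End-valued $(1,1)$-forms, and contracting it against $\omega^{2n-2k}$ produces an alternating sum over permutations of curvature contractions, not $\sum_i\lambda_i^{2k}$ for a single Hermitian matrix. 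The only case where this collapses to a sum of squares is $k=1$ (the classical computation $-\ch_2\wedge\omega^{2n-2}\propto|R|^2\,\omega^{2n}$ for Ricci-flat metrics, which is why $C(c_2)>0$ is unconditional). Already for $k=2$ the pointwise positivity of $\ch_4\wedge\omega^{2n-4}$ is open; the paper's own proof that $C(\ch_4)>0$ for $n=2$ is global, via Corollary~\ref{cor:relation_td1/2} together with Guan's bound on Betti numbers (see the discussion around \eqref{eq:ineq_introduction_ch_4}), and for $n>2$ it is precisely Conjecture~\ref{conj:Mercedes_graph}, which the paper does not prove.

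The Chern-root reduction has a similar problem: the roots $\pm y_j$, hence the $u_j=-y_j^2$, are formal symbols from the splitting principle, not differential forms on $X$, so ``joint pointwise positivity of the $u_j$'' is not a meaningful pointwise statement unless you first produce an actual eigenvalue decomposition of the curvature at each point and show the relevant invariants are Schur-positive in those eigenvalues --- which is again the whole content of the conjecture, not a reduction of it. You correctly identify that the disjoint-union-of-wheels case is where products of individually ``positive'' contractions need not stay positive (the graph $\Xi$ in dimension $8$ being the warning sign), and that a global Hodge--Riemann argument in the spirit of Proposition~\ref{prop:fundamental_inequality} may be unavoidable; but as written the proposal establishes neither the base case $r=1$, $k\geq2$ nor the inductive step, so it does not prove the statement.
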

This in particular generalizes the conjectures in \cite[Questions~4.7 and~4.8]{OberdieckSongVoisin} to products which do not necessarily live in top degree.

The conjectured alternating behaviour of products of Chern characters together with the positivity of products of Chern classes would yield in combination many restrictions and inequalities between these characteristic values. We expect the above positivity to hold pointwise and to be of local nature. 

\appendix
\section{Singularities of the orbifold $K_4'$}
\label{app:K4}
In this appendix, we analyze the singularities of the orbifold denoted by
$K_4'$ in~\cite[Section~5.4]{FuMenet}, which has $b_2=6$.
The analysis in {\it loc. cit.} contains an error, so the numbers $a_m$ of singular points
    obtained (namely, $a_2=45$ and $a_4=2$) are not correct.
In fact, using these numbers one would get the following relation
\[
C(C_2)=\sqrt{142 C(1)},
\]
which would imply that the Fujiki constant $C(1)$ contains
the large prime factor $71$ and this seems very unlikely.

We first recall the construction of the orbifold $K_4'$.
Consider the elliptic curve $E\coloneqq\bC/\inner{1,i}$ and let $T\coloneqq
    E\times E$ be the product of two copies of $E$, which can also be identified as
    $\bC^2/\Lambda$, where $\Lambda$ is the lattice $\inner{(1,0), (i,0), (0,1),
    (0,i)}$.
Consider the symplectic automorphism $\sigma_4$ of order 4 on $T$
    given by the matrix
\[
    \sigma_4 = \begin{pmatrix}0&-1\\1&0\end{pmatrix}.
\]
In other words, $\sigma_4$ acts via $(a,b)\mapsto (-b,a)$.
We get an induced automorphism $\sigma\coloneqq\sigma_4^{[2]}$ on the
generalized Kummer fourfold $\Kum_2(T)$.
Since $\sigma^2=(-\Id)^{[2]}$ is a symplectic involution, the fix locus of
$\sigma^2$ consists of 36 isolated points---35 of them are of the form $(x,y,-x,-y)$
for $x,y\in T[2]\setminus\set0$, and the last one is the vertex that is
supported at $(0,0,0)$---as well as a surface
\[
\Sigma\coloneqq\overline{\setmid{\xi\in\Kum_2(T)}{\supp \xi=\set{0,-x,x}\text{
for }x\in  T\setminus\set{0}}},
\]
that is isomorphic to the Kummer surface $\Kum(A)$.
If we take the quotient of $\Bl_\Sigma\Kum_2(A)$ by the induced action of
$\sigma^2$, we would obtain an example of type $K'$ which has $b_2=8$.
The orbifold $K_4'$ is obtained as the further $\bZ/2$-quotient by the induced
action of $\sigma$.

We analyze the number of quotient singularities. The number of order-4
singularities is equal to the number of fixed points by $\sigma$ among the 36
points.
The vertex is clearly fixed by $\sigma_4$, since the action is given by
$(a,b)\mapsto(-b,a)$.

We now consider the 2-torsion points. There are three non-trivial 2-torsion
points on the elliptic curve $E$, which we denote by $s=\frac 12,t=\frac i2,u=\frac{1+i}2$.
They also satisfy the extra relation $s+t+u=0$.
In \cite{FuMenet}, it is claimed that $\big((s,s),(t,t),(u,u)\big)$ is the only
fixed point by $\sigma$.
This is not true: the following two types of points are also fixed
\[
\big((s,t),(t,s),(u,u)\big),\big((s,0),(0,s),(s,s)\big),
\]
which by symmetry provide us with 6 more fixed points.
In this way we get $a_4=1+1+6=8$ while $a_2=(36-a_4)/2+(R-a_4)$, where $R$ is the
number of ramification points.
Then we follow the same computation as in \cite{FuMenet}: realizing $K_4'$ as a
$\bZ/2$-quotient of $K'$, we get
\[
2\chi(K_4')-R=\chi(K')=108\quad\Rightarrow\quad\chi(K_4')=(108+R)/2,
\]
while on the other hand,
\[
\begin{aligned}
\chi(K_4')&=b_4+2b_2+2\\
&=48+12b_2+s\quad\text{(orbifold Salamon relation,
\cite[Proposition~3.6]{FuMenet})}\\
&=48+12\times 6+(a_2\times(-1)+a_4\times(-3))\quad\text{(\cite[Equation~(16)]{FuMenet})},
\end{aligned}
\]
from which we may solve that
\[
R=24,\quad a_2=30,\quad\chi(K_4')=66.
\]
Thus, we can proceed to compute
\[
c_4=\chi-a_2\times\tfrac12-a_4\times \tfrac34=45
\]
and
\[
\td_4=\tfrac1{240}c_2^2-\tfrac1{720}c_4=3-a_2\times \tfrac1{32}-a_4\times
\tfrac9{64}=\tfrac{15}{16},
\]
so that we have
\[
c_2^2=(\tfrac{15}{16}+\tfrac{45}{720})\times 240=240.
\]
We summarize the results.
\begin{proposition}
For the orbifold $K_4'$, we have the following numerical invariants
\[
\begin{gathered}
a_2=30,\quad a_4=8,\quad \chi_\top(K_4')=66,\\
c_2^2=240,\quad c_4=45.
\end{gathered}
\]
\end{proposition}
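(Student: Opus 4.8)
\section*{Proof proposal}

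The plan is to first determine the combinatorial invariants $a_4$, $a_2$ and the Euler characteristic $\chi_\top(K_4')$ by a fixed-point analysis carried out along the tower of quotients relating $\Kum_2(T)$, the example $K'$, and $K_4'$, and then to extract the Chern numbers $c_4$ and $c_2^2$ from the orbifold Gauss--Bonnet and Riemann--Roch theorems recalled in Section~\ref{sec:orbifold_examples}.

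First I would pin down $a_4$, the number of order-$4$ cyclic quotient points. These are exactly the points fixed by $\sigma=\sigma_4^{[2]}$ among the $36$ isolated points fixed by the involution $\sigma^2=(-\Id)^{[2]}$. Writing these $36$ points as the vertex over $(0,0)$ together with the configurations $(x,y,-x,-y)$ for nonzero $2$-torsion $x,y\in T[2]$, I would go through each configuration and test invariance under $\sigma_4\colon(a,b)\mapsto(-b,a)$. The vertex is clearly fixed; among the remaining points, a short enumeration using the three non-trivial $2$-torsion points $s,t,u$ of $E$ (with $s+t+u=0$) singles out the configurations of type $\big((s,t),(t,s),(u,u)\big)$ and $\big((s,0),(0,s),(s,s)\big)$, which by symmetry account for $6$ further fixed points, yielding $a_4=1+1+6=8$.

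Next I would determine $a_2$, the number $R$ of ramification points of the double cover $K'\to K_4'$ induced by the residual $\sigma$-action, and $\chi_\top(K_4')$ simultaneously. The order-$2$ points split as those descending from the non-$\sigma$-fixed pairs among the $36$ points, contributing $(36-a_4)/2$, together with the $R-a_4$ new order-$2$ points along the ramification, so $a_2=(36-a_4)/2+(R-a_4)$. To close the system I would use two further relations: the double-cover formula $\chi(K')=2\chi(K_4')-R$ with $\chi(K')=108$, and the orbifold Salamon relation $\chi(K_4')=48+12b_2+s$, where the singularity contribution is $s=-a_2-3a_4$ read off from the local models at order-$2$ and order-$4$ points. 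Solving this linear system gives $R=24$, $a_2=30$ and $\chi_\top(K_4')=66$.

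With the singularity data in hand, the Chern numbers follow from the orbifold theorems of Section~\ref{sec:orbifold_examples}. Orbifold Gauss--Bonnet gives $c_4=\chi_\top(K_4')-a_2\cdot\tfrac12-a_4\cdot\tfrac34=45$, the local terms being the deficiencies $1-1/|G_x|$ of the quotient singularities. Orbifold Riemann--Roch applied to $\cO_{K_4'}$ computes $\td_4=\tfrac1{240}c_2^2-\tfrac1{720}c_4=3-a_2\cdot\tfrac1{32}-a_4\cdot\tfrac9{64}=\tfrac{15}{16}$, and combining this with the value of $c_4$ yields $c_2^2=240$. The main obstacle is the fixed-point enumeration in the first two steps: correctly counting the $\sigma$-invariant $2$-torsion configurations is delicate, and it is precisely here that the original analysis in the cited source went astray, producing the spurious values $a_2=45$, $a_4=2$.
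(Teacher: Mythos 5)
Your proposal is correct and follows essentially the same route as the paper's Appendix~\ref{app:K4}: the same enumeration of $\sigma$-fixed configurations giving $a_4=1+1+6=8$, the same linear system combining $a_2=(36-a_4)/2+(R-a_4)$ with the double-cover formula $2\chi(K_4')-R=\chi(K')=108$ and the orbifold Salamon relation, and the same extraction of $c_4$ and $c_2^2$ from orbifold Gauss--Bonnet and Riemann--Roch. Nothing is missing.
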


To finish, we can use the
Hitchin--Sawon formula to obtain
\[
\frac{C(c_2)^2}{C(1)}=\tfrac1{15}(7c_2^2-4c_4)=100,
\]
so
\[
C(c_2)=10\sqrt{C(1)},
\]
which indeed no longer has the prime factor 71.

Again, one may observe that the Hitchin--Sawon formula poses strong arithmetic
constraints on the generalized Fujiki constants (and it can be used as a
quite effective tool to detect errors in computations).

	\bibliography{pub_bib}
\end{document}